\documentclass[12pt]{article}
\usepackage{amsmath,amssymb,amsthm, amsfonts}
\usepackage{hyperref}
\usepackage{graphicx}
\usepackage{array, tabulary}
\usepackage{url}
\usepackage[mathlines]{lineno}
\usepackage{dsfont} 
\usepackage{color}
\usepackage{subcaption}
\usepackage{enumitem}
\definecolor{red}{rgb}{1,0,0}

\definecolor{blue}{rgb}{0,0,1}

\definecolor{green}{rgb}{0,.6,0}

\usepackage{float}

\usepackage{tikz}

\setlength{\textheight}{8.8in}
\setlength{\textwidth}{6.5in}
\voffset = -14mm
\hoffset = -10mm

\newtheorem{thm}{Theorem}[section]
\newtheorem{cor}[thm]{Corollary}
\newtheorem{lem}[thm]{Lemma}
\newtheorem{prop}[thm]{Proposition}

\newtheorem{obs}[thm]{Observation}

\newtheorem{quest}[thm]{Question}

\theoremstyle{definition}

\theoremstyle{definition}
\newtheorem{defn}[thm]{Definition}

\theoremstyle{definition}




\newcommand{\bit}{\begin{itemize}}
\newcommand{\eit}{\end{itemize}}
\newcommand{\ben}{\begin{enumerate}}
\newcommand{\een}{\end{enumerate}}
\newcommand{\beq}{\begin{equation}}
\newcommand{\eeq}{\end{equation}}
\newcommand{\bea}{\begin{eqnarray}} 
\newcommand{\eea}{\end{eqnarray}}
\newcommand{\bpf}{\begin{proof}}
\newcommand{\epf}{\end{proof}\ms}
\newcommand{\bmt}{\begin{bmatrix}}
\newcommand{\emt}{\end{bmatrix}}
\newcommand{\ms}{\medskip}

\newcommand{\lf}{\left\lfloor}
\newcommand{\rf}{\right\rfloor}

\newcommand{\noi}{\noindent}

\newcommand{\beqs}{\begin{equation*}} 
\newcommand{\eeqs}{\end{equation*}}
\newcommand{\beas}{\begin{eqnarray*}}
\newcommand{\eeas}{\end{eqnarray*}}

\newcommand{\thc}{\operatorname{th}_c}
\newcommand{\thd}{\operatorname{th}_d}
\newcommand{\capt}{\operatorname{capt}}
\newcommand{\dmg}{\operatorname{dmg}}
\newcommand{\rad}{\operatorname{rad}}
\newcommand{\srg}{\operatorname{SRG}}

\title{The damage throttling number of a graph}
\author{Joshua Carlson \thanks{Dept.~of Mathematics and Statistics, Williams College, Williamstown, MA, USA (\{jc31, re1, jmp10,  ps15\}@williams.edu)}
\and 
Robin Eagleton \footnotemark[1]
\and 
Jesse Geneson \thanks{Dept.~of Mathematics and Statistics, San Jos\'e State University, San Jos\'e, CA, USA (jesse.geneson@sjsu.edu) }
\and
John Petrucci \footnotemark[1]
\and 
Carolyn Reinhart \thanks{Dept.~of Mathematics, Iowa State University, Ames, IA, USA (reinh196@iastate.edu) Research is supported by NSF grant DMS-1839918} 
\and
Preetul Sen \footnotemark[1]
}


\date{\today}

\begin{document}
\maketitle

\begin{abstract} 
The \emph{cop throttling number} of a graph, introduced in 2018 by Breen et al., optimizes the balance between the number of cops used and the number of rounds required to catch the robber in a game of Cops and Robbers. In 2019, Cox and Sanaei studied a variant of Cops and Robbers in which the robber tries to occupy (or damage) as many vertices as possible and the cop tries to minimize this damage. In their paper, they study the minimum number of vertices damaged by the robber over all games played on a given graph $G$, called the \emph{damage number} of $G$. We introduce the natural parameter called the \emph{damage throttling number} of a graph, denoted $\thd(G)$, which optimizes the balance between the number of cops used and the number of vertices damaged in the graph. To this end, we formalize the definition of \emph{$k$-damage number}, which extends the damage number to games played with $k$ cops. We show that damage throttling and cop throttling share many properties, yet they exhibit interesting differences. We prove that the damage throttling number is tightly bounded above by one less than the cop throttling number. Infinite families of examples and non-examples of tightness in this bound are given. For most families of connected graphs $G$ of order $n$ that we consider, we find that $\thd(G) = O(\sqrt{n}).$ However, we find an infinite family of connected graphs $G$ of order $n$ for which $\thd(G) = \Omega(n^{2/3})$.  
\end{abstract}

\noi {\bf Keywords} Cops and Robbers, Capture time, Damage number, Throttling

\noi{\bf AMS subject classification} 05C57, 05C15, 05C50

\begin{section}{Introduction}\label{intro}
Cops and Robbers is a two-player pursuit-evasion game played on simple graphs which was introduced in \cite{AF84, NW83, Q78}. In this game, a team of $k$ cops attempt to capture a single robber on a given graph. In round $0$, each cop and the robber choose a vertex to occupy, starting with the cops. In all subsequent rounds, each cop either stays in their location or moves along an edge of the graph, after which the robber has the same choice.\footnote{This is equivalent to playing on a reflexive graph (every vertex has a loop) and requiring the robber and all cops to move during each round. Both interpretations are used in the literature.} If during any round, a cop occupies the same vertex as the robber, the cops win and the robber is caught (or captured). Alternatively, if the robber has a strategy to avoid capture forever, the robber wins the game.

Many graph parameters naturally arise from the game of Cops and Robbers. First introduced in \cite{AF84}, the minimum number $c(G)$ of cops required on a graph $G$ to guarantee capture of the robber is called the \emph{cop number} of the graph. A graph $G$ with $c(G) = 1$ is called \emph{cop-win}. Currently, one of the biggest open problems in Cops and Robbers is \emph{Meyniel's Conjecture} \cite{frankl1987cops}, which states that $c(G) = O(\sqrt{n})$.

Other parameters consider the amount of time taken to play a game of Cops and Robbers. For a graph $G$ and integer $k \geq 1$, the \emph{$k$-capture time} of a graph $G$, denoted $\capt_k(G)$, is the minimum number of rounds required for $k$ cops to capture a robber over all games played on $G$ where the robber avoids capture for as long as possible. The $k$-capture time of a graph was first studied for $k = c(G)$ in \cite{capttime} and for other $k$ values in \cite{BPPR17}. Note that if $k < c(G)$, then $\capt_k(G)$ is defined to be infinity. In \cite{CRthrottle}, the \emph{cop throttling number}, $\thc(G)$, of a graph $G$ was introduced in order to study the optimal balance between the number of cops used and their capture time. Specifically, for any graph $G$ on $n$ vertices, $\thc(G) = \underset{1 \leq k \leq n}{\min}\{k + \capt_k(G)\}$. The cop throttling number is an upper bound for the cop number of a graph and it was asked in \cite{CRthrottle} whether $\thc(G) = O(\sqrt{n})$. This question was answered negatively in \cite{CopThrot2}.

Recently in \cite{CS19}, Cox and Sanaei introduced an interesting parameter $\dmg(G)$, called \emph{the damage number} of $G$. A vertex is considered \emph{damaged} if the robber ever occupies that vertex in a round in which capture does not occur. For a graph $G$, $\dmg(G)$ is the minimum number of vertices damaged over all games played on $G$ with a single cop where the robber places and plays to maximize damage. Note that in this variant of the classic game, the cops try to minimize damage and do not necessarily capture the robber. Cox and Sanaei also mention that damage can be studied with multiple cops; the following definition formalizes this idea.
\begin{defn}\label{def:kdmg}
Suppose $G$ is a graph on $n$ vertices and $k$ is an integer with $1 \leq k \leq n$. The \emph{$k$-damage number} of $G$, denoted $\dmg_k(G)$, is the minimum number of vertices damaged over all games of Cops and Robbers played on $G$ with $k$ cops where the robber places and plays in order to maximize damage. 
\end{defn}

In contrast to $k$-capture time, the $k$-damage number of a graph is still interesting when $k < c(G)$ since $k$ cops can always seek to minimize damage regardless of whether capture is possible. Furthermore, the notion of $k$-damage number leads to the investigation of the optimal balance between the number of cops used and the number of vertices damaged. This idea is captured in the next definition.

\begin{defn}\label{defn:dmgthrot}
Suppose $G$ is a graph on $n$ vertices. The \emph{damage throttling number} of $G$ is defined as $\thd(G) = \underset{1 \leq k \leq n}{\min}\{k + \dmg_k(G)\}$.
\end{defn}

In this paper, we study the damage throttling number of a graph and how it compares to the cop throttling number. In Section \ref{sec:dmgCopThrot}, we explore various similarities between the two throttling numbers. First, we prove that for any connected graph $G$, $c(G) \leq \thd(G)$ and we show that for any graph $G$, $\thd(G) \leq \thc(G) - 1$. In Section \ref{sec:largedamagethrot}, we find a family of connected graphs with large damage throttling number. We find in Section \ref{subsec:boundtight} several infinite families of graphs $G$ for which $\thd(G)=\thc(G) - 1$. Furthermore, we show this equality holds for all connected graphs on $6$ or fewer vertices in Section \ref{subsec:fewvertices}. In Section \ref{sec:diff}, we explore the differences between damage throttling and cop throttling by finding infinite families of graphs $G$ for which $\thd(G) < \thc(G) - 1$. In particular, we examine a family of graphs with highest possible capture time in Section \ref{subsec:hn}. Finally, we study $k$-damage numbers and how they compare to the cop number in Section \ref{sec:damageNumbers}. 
Throughout this paper, we follow most of the graph theoretic and Cops and Robbers notation found in \cite{Diestel} and \cite{CRbook} respectively. 
\end{section}

\begin{section}{Similarities with cop throttling}\label{sec:dmgCopThrot}


In this section, we examine some ways in which the damage throttling number of a graph $G$ is similar to the cop throttling number of $G$. First, recall that the domination number of a graph $G$, $\gamma(G)$, is the smallest cardinality of a subset $S \subseteq V(G)$ such that $V(G)$ is the closed neighborhood of $S$.  It was observed in \cite{CRthrottle} that for any graph $G$, $\thc(G) \leq \gamma(G) + 1$, and we now make a similar observation for the damage throttling number.

\begin{obs}
    For any graph $G$, $\thd(G)\leq\gamma(G)$.
\end{obs}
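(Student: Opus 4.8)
The plan is to exhibit a cop strategy, using $\gamma(G)$ cops, that damages zero vertices, which immediately gives $\dmg_{\gamma(G)}(G) = 0$ and hence $\thd(G) \leq \gamma(G) + \dmg_{\gamma(G)}(G) = \gamma(G)$ by the definition of the damage throttling number. The key observation is that a dominating set is precisely what is needed to prevent \emph{any} damage: if every vertex is in the closed neighborhood of the cops' occupied vertices, the robber has nowhere to move (or initially place) without being on a vertex already covered by a cop.

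First I would let $S \subseteq V(G)$ be a minimum dominating set, so $|S| = \gamma(G)$ and $V(G) = N[S]$. The strategy is to place one cop on each vertex of $S$ in round $0$ and have every cop remain stationary thereafter. Next I would verify that this placement prevents damage. In round $0$, the cops choose their vertices first, and then the robber chooses a vertex $v$. Since $S$ is dominating, $v$ is either in $S$ or adjacent to some vertex of $S$; in either case some cop occupies a vertex in $N[v]$, so on the cops' very next turn (or already, if $v \in S$) the robber is captured. More to the point for damage: I would argue that the robber can never occupy a vertex in a round in which capture does not occur. If the robber ever sits on or moves to a vertex $v \notin S$, then $v$ is dominated by some $s \in S$, so a cop is adjacent to the robber and captures it on the cops' next move; thus $v$ is occupied only in a round where capture occurs, and no damage is recorded. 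This matches the convention in the excerpt that a vertex counts as damaged only if the robber occupies it in a round in which capture does \emph{not} occur.

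The one point requiring care is the exact bookkeeping of the round in which capture happens relative to the definition of damage, since the robber occupying a vertex adjacent to a cop is not yet captured until the cops move. I expect this to be the only genuine subtlety: one must confirm that under the stated rules a vertex the robber lands on while adjacent to a stationary cop does not get counted as damaged, because capture occurs in that same round on the cops' turn. Once this convention is pinned down, the argument is complete, and the inequality $\thd(G) \leq \gamma(G)$ follows directly.
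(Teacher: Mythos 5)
Your proposal is correct and matches the argument the paper leaves implicit (the Observation is stated without proof): placing $\gamma(G)$ cops on a minimum dominating set forces capture in round $1$, so $\dmg_{\gamma(G)}(G)=0$ and $\thd(G)\leq\gamma(G)+0$. The round-counting subtlety you flag resolves as you expect under the paper's convention (cf.\ its use of $\dmg_k(G)=0$ whenever $k\geq\gamma(G)$, e.g.\ for the Petersen graph), though note your strategy should say the cops stay put only until a capturing move is available, rather than remaining stationary forever.
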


Next, we investigate the relationship between the damage throttling number and the cop number of a graph. Although for all graphs $G$, $c(G)\leq \thc(G)$ is trivial, this is not the case with damage throttling. For example, the graph $\overline{K_n}$ consisting of $n$ isolated vertices satisfies $\thd(\overline{K_n}) = 2 < n = c(\overline{K_n}).$ However, for connected graphs, we can prove the analogous result for damage throttling.

\begin{prop}\label{prop:c(G)leqthd(G)} If $G$ is a connected graph, then $c(G) \leq  \thd(G).$
\end{prop}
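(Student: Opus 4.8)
The plan is to prove the stronger pointwise inequality $c(G) \le k + \dmg_k(G)$ for \emph{every} $k$ with $1 \le k \le n$; taking the minimum over $k$ then immediately gives $c(G) \le \thd(G)$. So I would fix $k$, write $d = \dmg_k(G)$, and let $S$ be a $k$-cop strategy witnessing this damage number, i.e.\ a strategy guaranteeing that against any robber play the total number of damaged vertices is at most $d$. The goal is then to exhibit a winning (capturing) strategy for $k+d$ cops: $k$ of them execute $S$ throughout the game, while the remaining $d$ cops are used to corner the robber. Since $k+d$ cops then capture, this certifies $c(G) \le k+d$.

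The key observation is that $S$ confines the robber to a small set. Let $D$ denote the set of all vertices the robber ever occupies in a non-capture round over the course of the game. By definition every such vertex is damaged, so the guarantee of $S$ forces $|D| \le d$, and moreover in every non-capture round the robber's current vertex lies in $D$. Thus at all times the robber is restricted to a set of at most $d$ vertices. The idea is now to route the $d$ extra cops so as to occupy every vertex of $D$: once all of $D$ is simultaneously occupied by cops, the robber (which must sit in $D$) is captured. Here is exactly where connectivity of $G$ is used — and precisely what fails for $\overline{K_n}$ — since connectivity guarantees that each extra cop can walk from its current position to any prescribed vertex of $D$.

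The main step to get right is this routing/flooding argument, because $D$ is revealed dynamically and may grow, up to its hard cap of $d$, as the robber tries to evade the incoming cops. I would formalize it by bringing the extra cops onto vertices of $D$ one at a time: after a cop permanently occupies a vertex $u \in D$, the robber must avoid $u$, so its available territory inside $D$ strictly shrinks, while it can never leave $D$ altogether, since occupying a fresh vertex would push the damage above $d$ and contradict the guarantee of $S$. As the robber can enlarge $D$ at most $d$ times in total and we have exactly $d$ extra cops available to cover it, this process terminates with every vertex of $D$ occupied and the robber captured. The delicate points, and the crux of the argument, are verifying that the cops make net progress between successive enlargements of $D$ and that the cops-to-vertices bookkeeping never runs short, so that the flooding actually terminates. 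Granting this, $k+d$ cops capture the robber, giving $c(G) \le k + \dmg_k(G)$ for every $k$, and hence $c(G) \le \thd(G)$.
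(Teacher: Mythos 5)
Your proposal is correct and follows essentially the same route as the paper: $k$ cops execute the damage-minimizing strategy while $\dmg_k(G)$ extra cops exploit connectivity to walk onto and permanently occupy the (at most $\dmg_k(G)$) damaged vertices to which the robber is confined, yielding $c(G) \leq k + \dmg_k(G)$ for every $k$ and hence $c(G) \leq \thd(G)$. If anything, your bookkeeping for the dynamically growing damaged set is more explicit than the paper's, which simply asserts that the undercover cops can move to occupy the damaged vertices.
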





\bpf 
Let $G$ be a connected graph on $n$ vertices. We exhibit a strategy for $k + \dmg_k(G)$ cops to guarantee capture of the robber on $G$. Throughout this strategy, $k$ cops (called \emph{active} cops) play optimally to prevent damage on $G$.  The active cops can ensure that the robber can damage at most $\dmg_k(G)$ vertices. We refer to the remaining $\dmg_k(G)$ cops as \emph{undercover} cops. Whenever a new vertex is damaged by the robber, an undercover cop moves to occupy that vertex. Since the robber can damage at most $\dmg_k(G)$ vertices, one of the cops (active or undercover) is guaranteed to be able to capture the robber. Thus, for all $k$, $k+\dmg_k(G)$ cops can capture the robber in finite rounds. In particular, for an integer $\ell$ that realizes $\thd(G)$, $c(G) \leq \ell+\dmg_{\ell}(G) = \thd(G)$.
\epf
We now establish a relationship between $\thd(G)$ and $\thc(G)$ using the following lemma.

\begin{lem}\label{dmgCaptIneqLemma}
If $G$ is a graph on $n$ vertices and $1 \leq k \leq n$ is an integer, then \[\dmg_k(G) \leq \capt_k(G)-1.\]
\end{lem}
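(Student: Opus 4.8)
The plan is to exploit the fact that $\dmg_k(G)$ is a minimum over cop strategies of the worst-case damage, so it suffices to exhibit a \emph{single} cop strategy against which every robber damages at most $\capt_k(G)-1$ vertices. The natural candidate is a capture-optimal strategy $\sigma^*$ for the $k$ cops, i.e.\ one realizing $\capt_k(G)$. First I would dispose of the case $k < c(G)$: there $\capt_k(G)=\infty$, so the inequality holds trivially since $\dmg_k(G)\le n$ is always finite. Thus I may assume $k\ge c(G)$, so $\sigma^*$ exists. By the $\min$--$\max$ definition of capture time, $\sigma^*$ captures the evasion-optimal robber in exactly $\capt_k(G)$ rounds, and hence captures \emph{every} robber within $\capt_k(G)$ rounds; stating this carefully (that the bound is uniform over all robber strategies, not just the evading one) is one point I would be sure to justify.

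Next I would track the robber's trajectory when the cops play $\sigma^*$ and the robber plays to maximize damage. Write $v_0$ for the robber's round-$0$ placement and $v_i$ for its position at the end of round $i\ge 1$, and let $s\le \capt_k(G)$ be the round in which capture occurs. The key accounting observation is that a vertex contributes damage only when the robber occupies it in a round $i\ge 1$ in which it is not captured: the round-$0$ placement $v_0$ contributes no damage, and the capture round $s$ contributes none either. Consequently every damaged vertex appears in the list $v_1, v_2, \dots, v_{s-1}$.

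The bound then follows immediately, since this list has at most $s-1 \le \capt_k(G)-1$ distinct entries, giving $\dmg_k(G)\le \capt_k(G)-1$. I expect the only real subtlety, rather than a genuine obstacle, to be pinning down the damage convention so the ``$-1$'' is legitimate and not an off-by-one error: the robber's starting vertex is not damaged, yet a vertex on which the robber \emph{remains} after round $0$ does get counted, because it reappears as some $v_i$ with $i\ge 1$. This is precisely the convention under which the earlier observation $\thd(G)\le \gamma(G)$ improves the cop-throttling bound $\gamma(G)+1$ by one, so it is consistent to use it here as well.
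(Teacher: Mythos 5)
Your proposal is correct and follows essentially the same route as the paper: dispose of $k<c(G)$ via $\capt_k(G)=\infty$, then play a capture-optimal cop strategy and observe that the robber damages at most one new vertex per round and is caught by round $\capt_k(G)$, so at most $\capt_k(G)-1$ vertices are damaged. Your version is just more explicit about the trajectory bookkeeping and about the (correct) minimax point that an optimal cop strategy bounds the capture round uniformly over all robber strategies.
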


\begin{proof}
If $k<c(G)$, then $\capt_k(G) = \infty$ and $\dmg_k(G) < n < \capt_k(G) - 1$. Now, suppose $c(G) \leq k \leq n$. The robber can damage at most one new vertex each round before being caught. If $k$ cops play optimally to catch the robber on $G$, the robber is caught in round $\capt_k(G)$. Therefore, the robber can damage at most $\capt_k(G) - 1$ vertices.
\end{proof}

\begin{prop}\label{prop:INEQthcthd}
For any graph $G$ with $|V(G)| \geq 2$, $\thd(G) \leq \thc(G)-1$.
\end{prop}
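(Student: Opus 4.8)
The plan is to obtain the inequality as a direct consequence of Lemma~\ref{dmgCaptIneqLemma} together with the two defining minima. Since $\thc(G)=\min_{1\le k\le n}\{k+\capt_k(G)\}$ is finite, I would fix an integer $k$ that realizes it, so that $\thc(G)=k+\capt_k(G)$ with $\capt_k(G)$ finite. The definition $\thd(G)=\min_{1\le k\le n}\{k+\dmg_k(G)\}$ then gives $\thd(G)\le k+\dmg_k(G)$ for this same $k$, and Lemma~\ref{dmgCaptIneqLemma} controls the damage term. Chaining these bounds,
\[
\thd(G)\;\le\;k+\dmg_k(G)\;\le\;k+\bigl(\capt_k(G)-1\bigr)\;=\;\thc(G)-1,
\]
which is exactly the claim.

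The single place needing care — and what I expect to be the main obstacle — is that the bound of Lemma~\ref{dmgCaptIneqLemma} is only informative when $\capt_k(G)\ge 1$: if the realizing $k$ had $\capt_k(G)=0$, the chain would assert $\dmg_k(G)\le-1$, which is impossible since damage is nonnegative. But $\capt_k(G)=0$ means the robber is caught already in round $0$, which forces a cop on every vertex and hence $k=n$. So the chaining above is valid precisely when $\thc(G)$ is realized at some $k<n$ (equivalently $\capt_k(G)\ge 1$), and I would handle this as the principal case.

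It then remains to treat the degenerate possibility that $\thc(G)$ is realized \emph{only} at $k=n$, so that $\thc(G)=n$. Here I would avoid the lemma and bound $\thd(G)$ directly, using the earlier observation that $\thd(G)\le\gamma(G)$: since $\gamma(G)=n$ holds only for the edgeless graph $\overline{K_n}$, every other graph satisfies $\thd(G)\le\gamma(G)\le n-1=\thc(G)-1$, while for $\overline{K_n}$ one checks $\thd(\overline{K_n})=2$ directly (a single cop must leave an isolated vertex to the robber), giving $\thd(G)\le n-1$ once $n\ge 3$. The hypothesis $|V(G)|\ge 2$ serves to keep $\thc(G)\ge 2$ so that the right-hand side is meaningful. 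Beyond this small case analysis, the statement is essentially a one-line corollary of Lemma~\ref{dmgCaptIneqLemma}, so the bulk of the work is simply confirming that an optimal $k$ for cop throttling can be taken with positive capture time.
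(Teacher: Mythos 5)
Your argument is the paper's argument at its core: pick $\ell$ realizing $\thc(G)$, bound $\thd(G)\le \ell+\dmg_{\ell}(G)$ by definition of the minimum, and close with Lemma~\ref{dmgCaptIneqLemma}. Where you genuinely diverge is in the care you take with the case $\capt_{\ell}(G)=0$, i.e.\ $\ell=n$. The paper simply applies the lemma for every $1\le k\le n$, but your worry is well founded: the lemma as stated cannot hold at $k=n$, since there $\dmg_n(G)=0$ while $\capt_n(G)-1=-1$. (In fact, in your degenerate case one can say more: if $\thc(G)$ is realized only at $k=n$, then $\capt_{n-1}(G)\ge 2$, which forces $\gamma(G)=n$, i.e.\ $G$ edgeless.) Your fallback via $\thd(G)\le\gamma(G)\le n-1$ for graphs with an edge, and $\thd(\overline{K_n})=2\le n-1$ for $n\ge 3$, is correct. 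The one configuration you leave open --- the edgeless graph on two vertices --- is not a gap in your reasoning but a counterexample to the proposition as stated: $\thd(\overline{K_2})=2$ while $\thc(\overline{K_2})-1=1$, exactly what the paper's own later observation on $\overline{K_n}$ yields at $n=2$. So your proof is complete on every graph for which the claim is true, and is more rigorous than the paper's; the statement (and Lemma~\ref{dmgCaptIneqLemma}) should really exclude the $k=n$/edgeless-on-two-vertices situation, for instance by assuming $G$ has an edge or is connected.
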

\begin{proof}
Suppose $G$ is a graph of order $n \geq 2$. Choose an integer $c(G) \leq \ell \leq n$ that realizes $\thc(G)$; in other words, choose $\ell$ such that $\ell+\capt_{\ell}(G)=\underset{1\leq k \leq n}{\min}\{ k+\capt_k(G) \}$. Then,

\begin{align}
\label{dmgeq1} \thd(G) &= \underset{1 \leq k \leq n}{\min}\{k + \dmg_{k}(G)\} \nonumber \\[.5 em]
&\leq \ell + \dmg_{\ell}(G) \\[.5 em] 
\label{dmgeq2}&\leq \ell + \capt_{\ell}(G) - 1 \quad \quad \text{(by Lemma \ref{dmgCaptIneqLemma})}\\[.5 em] &= \thc(G) - 1. \nonumber \qedhere
\end{align}
\end{proof}

The following corollary characterizes when the bound in Proposition \ref{prop:INEQthcthd} is tight. 


\begin{cor}\label{cor:characterization}
The equality $\thd(G)=\thc(G)-1$ holds if and only if there exists an $\ell \geq c(G)$ such that $\dmg_{\ell}(G) =\capt_{\ell}(G) - 1$ and both $\thc(G)$ and $\thd(G)$ can be achieved with $\ell$ cops. 
\end{cor}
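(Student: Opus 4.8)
The plan is to prove both directions of the biconditional, leaning heavily on the inequality chain already established in the proof of Proposition \ref{prop:INEQthcthd}.

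First I would dispatch the easier ($\Leftarrow$) direction. Assume there is some $\ell \geq c(G)$ with $\dmg_\ell(G) = \capt_\ell(G) - 1$, and with both $\thc(G)$ and $\thd(G)$ realized at $k = \ell$; that is, $\thc(G) = \ell + \capt_\ell(G)$ and $\thd(G) = \ell + \dmg_\ell(G)$. Substituting the hypothesized equality directly gives
\[
\thd(G) = \ell + \dmg_\ell(G) = \ell + \capt_\ell(G) - 1 = \thc(G) - 1,
\]
which is the desired conclusion. This direction is a routine substitution and presents no obstacle.

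For the ($\Rightarrow$) direction, assume $\thd(G) = \thc(G) - 1$. I would choose an integer $\ell$ with $c(G) \leq \ell \leq n$ that realizes $\thc(G)$, exactly as in the proof of Proposition \ref{prop:INEQthcthd}; such an $\ell$ exists and satisfies $\ell \geq c(G)$ because $\capt_k(G) = \infty$ whenever $k < c(G)$, so no such $k$ can realize the (finite) minimum defining $\thc(G)$. The heart of the argument is to revisit the chain
\[
\thd(G) \leq \ell + \dmg_\ell(G) \leq \ell + \capt_\ell(G) - 1 = \thc(G) - 1
\]
from that proof. Under the standing assumption the leftmost and rightmost expressions coincide, so every inequality in the chain must in fact be an equality. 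The equality of the first two terms says $\thd(G) = \ell + \dmg_\ell(G)$, i.e.\ $\thd(G)$ is achieved with $\ell$ cops; the equality of the middle two terms says $\dmg_\ell(G) = \capt_\ell(G) - 1$; and $\thc(G)$ is achieved with $\ell$ by the very choice of $\ell$. These are precisely the three conditions in the statement, completing the direction.

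The main (and really only) subtlety is the observation that equality at the two ends of an inequality chain forces equality at every intermediate step, together with correctly matching each collapsed inequality to the corresponding condition in the corollary. There is no genuine combinatorial obstacle here, since all the game-theoretic content is already packaged into Lemma \ref{dmgCaptIneqLemma} and the inequality chain of Proposition \ref{prop:INEQthcthd}.
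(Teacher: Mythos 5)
Your proof is correct and follows essentially the same route as the paper: both arguments reduce the characterization to observing which inequalities in the chain from Proposition \ref{prop:INEQthcthd} are forced to be tight, with your write-up simply making the two directions and the choice of $\ell \geq c(G)$ more explicit than the paper's terse version.
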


\begin{proof}
Consider each inequality in the proof of Proposition \ref{prop:INEQthcthd}. By definition, an integer $\ell$ realizes $\thc(G)$ if and only if $\ell + \capt_{\ell}(G) = \thc(G)$. Furthermore, $\ell$ realizes $\thd(G)$ if and only if inequality (\ref{dmgeq1}) is tight. Thus, requiring that inequality (\ref{dmgeq2}) is also tight completes the characterization of graphs $G$ with $\thd(G) = \thc(G) - 1$.
\end{proof}

\subsection{Graphs with large damage throttling number}\label{sec:largedamagethrot}
In this subsection, we show that there exist connected graphs with high cop number and high damage number. Since $c(G) \leq \thc(G)$ for any graph $G$, if it were true that $\thc(G) = O(\sqrt{n})$ for all connected graphs, Meyniel's conjecture would follow. However, graphs have been found that have cop throttling number asymptotically larger than $\sqrt{n}$.

\begin{cor}\cite[Corollary 2.3]{CopThrot2}\label{cor:HighCopThrot}
 There exist connected graphs of order $n$ with cop throttling number $\Omega(n^{2/3})$.
\end{cor}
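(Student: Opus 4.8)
The statement is attributed to \cite{CopThrot2}, so the task is really to reconstruct a proof strategy for the \emph{existence} of such graphs. The plan is to exhibit an explicit family $\{G_n\}$ of connected graphs of order $\Theta(n)$ together with, for every admissible number of cops $k$, a lower bound on $\capt_k(G_n)$, and then to show that $\min_{1\le k\le n}\{k+\capt_k(G_n)\}=\Omega(n^{2/3})$. Since the upper bound $\thc(G)=O(\sqrt n)$ already holds for paths, the whole difficulty is engineering a family where cops are provably \emph{inefficient}.

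The design principle I would follow comes from understanding why $\thc(P_n)=\Theta(\sqrt n)$: on $P_n$ one has $\capt_k(P_n)=\Theta(n/k)$ because $k$ cops split the path into $k$ independently clearable segments, so cop effort scales linearly in $k$. To beat $\sqrt n$, the graph must destroy this clean division of labor, which is exactly the behavior induced by large cop number: high connectivity lets the robber repeatedly re-enter regions the cops believe they have cleared, forcing the cops to spend $\omega(1)$ rounds per vertex of genuine progress so that $\capt_k$ decays more slowly than $n/k$. Concretely, I would take a Meyniel-extremal building block $H$ on $m$ vertices with $c(H)=\Theta(\sqrt m)$ (e.g.\ an incidence/polarity graph of a projective plane) and assemble and scale copies of it so that (i) fewer than $\Theta(\sqrt m)$ cops cannot make progress in a block, forcing $\capt_k=\infty$ there, while (ii) once enough cops are present the robber's re-contamination strategy still keeps $\capt_k$ large; the number of blocks, the value of $m$, and the diameter are then tuned so that the minimizing $k$ lands at $\Theta(n^{2/3})$ and cop efficiency scales like $\sqrt k$ rather than $k$.

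With the construction fixed, two quantitative ingredients remain. First, a \emph{small-$k$ bound}: for every $k$ below a threshold $\sim n^{2/3}$ I would produce a robber strategy that keeps reaching fresh territory (or simply evades) for $\Omega(n^{2/3}-k)$ rounds. Second, an \emph{aggregation step} checking that $k+\capt_k(G_n)\ge c\,n^{2/3}$ uniformly over all $k$; once the per-$k$ bounds are in hand, balancing the parameters so the exponent comes out to exactly $2/3$ is routine arithmetic rather than the crux.

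The hard part will be the capture-time lower bound itself, and specifically its uniformity in $k$. That too few cops never capture is easy; the delicate regime is $k$ just above the threshold, where a clever cop team might try to corner the robber quickly. Showing the robber can always re-contaminate cleared territory fast enough to keep $\capt_k=\omega(n/k)$ requires an explicit robber evasion strategy backed by an isoperimetric/expansion estimate on the building block — namely that the boundary a cop team must guard to confine the robber grows like $\sqrt{\text{area}}$ — and packaging this as a potential-function argument is the heart of the proof. Finally, I would remark that this same family is the natural candidate for the damage analogue: by Proposition~\ref{prop:INEQthcthd} any bound $\thd(G_n)=\Omega(n^{2/3})$ would satisfy $\thc(G_n)\ge\thd(G_n)+1$ and hence re-prove this corollary, linking the cop and damage throttling estimates on a single construction.
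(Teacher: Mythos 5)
Your general blueprint---glue together Meyniel-extremal blocks of cop number $\Theta(\sqrt m)$ and tune parameters so the balance point lands at $n^{2/3}$---matches the construction actually used (the paper's damage analogue, and the cited Corollary 2.3, take a spider with $\lfloor n^{1/3}\rfloor$ legs of length $\approx n^{2/3}$ and replace the far half of each leg by a girth-$5$ Moore-type graph of cop number $\Omega(n^{1/3})$). But your account of where the difficulty lies, and of the mechanism that forces $\capt_k$ to be large, is off in a way that leaves a real gap. The regime you flag as delicate---$k$ ``just above the threshold'' $\sim n^{2/3}$---needs no capture-time lower bound at all: there $k+\capt_k(G)\ge k=\Omega(n^{2/3})$ trivially. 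The only regime that matters is $k=o(n^{2/3})$, and there the actual argument is a pigeonhole-plus-distance argument that your proposal never states: with $n^{1/3}$ blocks each requiring $\Omega(n^{1/3})$ cops for local capture, some block initially holds too few cops to ever catch the robber locally (by the elementary girth-$5$ neighborhood-counting bound $c(G)\ge\delta(G)$, not an isoperimetric estimate), and because that block sits at the end of a path of length $\Omega(n^{2/3})$, reinforcements from other legs need $\Omega(n^{2/3})$ rounds to arrive. That travel-time bound \emph{is} the capture-time lower bound; no re-contamination of cleared territory, no potential function, and no claim that ``cop efficiency scales like $\sqrt k$'' is needed or used.

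Concretely, the missing idea is the spider skeleton: long pendant paths whose sole job is to make the graph connected while keeping the underdefended block far from every other cop. Without it, ``assembling and scaling copies'' of an expander-like block does not obviously give $\capt_k=\Omega(n^{2/3})$ for small $k$, because a well-connected union of blocks could let cops redeploy quickly; and the re-contamination program you propose as the heart of the proof would have to establish $\capt_k=\omega(n/k)$ on a single dense block, which is a substantially harder (and here unnecessary) claim than anything the actual proof requires. Your closing remark also inverts the logical direction used in this paper: the damage bound $\thd(G)=\Omega(n^{2/3})$ is the \emph{new} result proved by strengthening the robber's evasion to one that damages fresh vertices (Lemma~\ref{dmg_lem}), with Corollary~\ref{cor:HighCopThrot} taken as known input, not re-derived from it.
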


We have shown in Propositions \ref{prop:c(G)leqthd(G)} and \ref{prop:INEQthcthd} that $c(G) \leq \thd(G) \leq \thc(G) - 1$ for all connected graphs $G$, so it is natural to ask whether $\thd(G) = O(\sqrt{n})$ as this would also imply Meyniel's conjecture. Note that the argument used to prove Corollary \ref{cor:HighCopThrot} relies on the fact that capture of the robber is required to achieve a finite cop throttling number. However, this argument is not sufficient for damage throttling since capture is not required to achieve a finite damage throttling number. To that end, we prove the next lemma which is used in the main result of this subsection. It may also be of independent interest, since it strengthens the result that there exist connected graphs of order $n$ with cop number $\Omega(\sqrt{n})$.

\begin{lem}\label{dmg_lem}
For any $n$ sufficiently large and constant $c < 1$, there exists a connected graph of order $n$ where the robber can safely damage $\Omega(n)$ vertices and evade capture forever against at most $c \sqrt{n}$ cops.
\end{lem}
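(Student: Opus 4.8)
The plan is to exhibit, for each large $n$, a connected $\delta$-regular graph $G$ of order $n$ that simultaneously has girth at least $5$, minimum degree $\delta > c\sqrt{n}$, and good expansion. Incidence graphs of projective planes (and, more generally, suitable generalized polygons or near-Moore graphs) are the natural candidates: they are connected, have girth at least $5$, degree $\delta = \Theta(\sqrt{n})$, are spectral expanders, and have small diameter. The first task is to select the extremal family so that the ratio $\delta/\sqrt{n}$ exceeds the prescribed constant $c<1$; this is where the choice of construction matters, since one needs the degree to be close to the Moore bound $\delta \approx \sqrt{n}$.

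Evasion against at most $c\sqrt{n} < \delta$ cops follows from the standard girth-$5$ escape argument. At any moment the robber sits at a vertex $v$, and after the cops move call a neighbor $u$ of $v$ \emph{blocked} if some cop occupies $u$ or a neighbor of $u$. Because $G$ has girth at least $5$, any single cop blocks at most one neighbor of $v$: a cop adjacent to $v$ forms no triangle, so it is adjacent to no other neighbor of $v$, while a cop not adjacent to $v$ shares at most one common neighbor with $v$ since there is no $C_4$. Hence at most $k \leq c\sqrt{n} < \delta$ of the $\delta$ neighbors are blocked, so the robber always has a safe move and is never captured; in particular $c(G) \geq \delta > c\sqrt{n}$.

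For the damage bound the robber plays greedily: whenever a safe \emph{unvisited} neighbor exists, move there (damaging a new vertex); otherwise move along a safe path toward the nearest unvisited vertex. Writing $D$ for the current set of damaged vertices, I would argue by expansion that as long as $|D| < \alpha n$ for a suitable constant $\alpha$, the robber can always reach and cross the frontier of $D$. Concretely, expansion gives $\Omega(\delta |D|) = \Omega(\sqrt{n}\,|D|)$ edges leaving $D$, whereas the $k = O(\sqrt{n})$ cops can seal off only $O(\sqrt{n})$ of these crossings at any step; since the robber always has a safe move by the previous paragraph, it can navigate to an unblocked boundary edge within a bounded number of rounds and damage a new vertex. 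Iterating, the number of damaged vertices grows until $|D| = \Omega(n)$.

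The main obstacle is the damage step, specifically formalizing frontier-reachability: I must show the greedy robber actually navigates to an exploitable boundary vertex despite the cops, rather than being herded into an already-damaged region, and I must control the revisiting that occurs during navigation so that genuinely new vertices keep appearing. This requires combining the expander edge-boundary estimate with the fact that only $O(\sqrt{n})$ cops are present and choosing $\alpha$ together with the expansion parameters so that the counting closes. A secondary obstacle is the construction itself: guaranteeing $\delta/\sqrt{n} > c$ for every $c < 1$ forces a family whose degree is asymptotically the full Moore bound, and verifying that such a family is simultaneously girth-$5$ and a sufficiently good expander is the other place where care is genuinely needed.
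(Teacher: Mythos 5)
Your evasion argument is exactly the one in the paper: in a girth-$5$ graph each cop guards at most one neighbor of the robber, so against $k \leq c\sqrt{n} < \delta$ cops the robber always has at least $\delta - k$ safe neighbors and is never caught. The gap is in the damage step. Your plan --- greedily walk toward the frontier of the damaged set $D$ and use an expander edge-boundary bound to find an unsealed crossing --- leaves unproved precisely the claim you flag yourself: that the robber can actually reach an exploitable boundary vertex rather than being herded around inside $D$. The girth-$5$ escape argument guarantees that \emph{some} safe neighbor exists at every step, but says nothing about \emph{which} neighbors are safe; the cops react to the robber's trajectory, so ``many boundary edges leave $D$'' plus ``a safe move always exists'' does not yield ``the robber crosses an unblocked boundary edge within a bounded number of rounds.'' As written, the counting does not close. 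You also impose an extra requirement (spectral expansion) on a construction that is already strained: incidence graphs of projective planes have girth $6$ but degree only about $\sqrt{n/2}$, so they fail for every $c > 1/\sqrt{2}$, as you yourself note.

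The paper avoids the navigation problem entirely by making the damage argument local. It takes the Moore graph of degree $d = \lfloor \sqrt{n-1}\rfloor$, diameter $2$ and girth $5$, pads the order to $n$ with a pendant path that the robber ignores, and has the robber alternate: in odd rounds move to the unguarded neighbor with the fewest damaged neighbors, in even rounds move to any unguarded undamaged neighbor. If the even-round move is ever impossible, then every one of the at least $r\sqrt{n}$ unguarded neighbors of the previous vertex $u$ already has at least $r\sqrt{n}$ damaged neighbors, and since girth $5$ forces these closed neighborhoods to meet only in $u$, at least $r\sqrt{n}\left(r\sqrt{n}-1\right) = \Omega(n)$ vertices are already damaged. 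Either way the robber safely damages $\Omega(n)$ vertices, with no expansion hypothesis and no frontier-reachability lemma. To repair your proof, replace the global frontier argument with this two-round win-win counting; note that it is the degree being within a constant factor of the Moore bound that makes the product $\left(r\sqrt{n}\right)^2 = \Omega(n)$, so that part of your concern about the construction is the one worth keeping.
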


\begin{proof}
We modify the Moore graph $H$ of degree $d = \lf \sqrt{n-1} \rf$ and diameter $2$, which is a regular graph of girth $5$ and order $d^2+1$. This graph is known to have cop number $d$ (in general $c(G) \geq \delta(G)$ for any $G$ of girth at least $5$). Let the graph $G$ of order $n$ be obtained from $H$ by picking a single vertex $v$ of $H$ and adding a path $P$ of length $n-1-d^2$ that is connected to $v$ at an endpoint. 

If $k \leq c \sqrt{n}$, then there exists a constant $r > 0$ such that $d-k > r \sqrt{n}$ for $n$ sufficiently large since $c < 1$. The robber uses the following strategy to safely damage $\Omega(n)$ vertices of $H$ in $G$ and evade capture forever against $k$ cops.

After the cops make their initial placements, the robber views any cops on the path $P$ as being on $v$ instead, ignoring the path. Since $d-k > r \sqrt{n}$, the number of vertices occupied by or adjacent to a cop at any time is at most $d-1+(d-1)d < d^2+1$, so the robber can choose an initial position not occupied by any cop or adjacent to any cop. 

Call a vertex \emph{guarded} if it is occupied by or adjacent to a cop. Since the Moore graph has diameter $2$, every vertex in $H$ is within distance $2$ of the robber's current vertex. Since the Moore graph avoids $C_3$ and $C_4$, each cop guards at most one neighbor of the robber. During every \emph{odd} round, the robber goes to its unguarded neighbor with the fewest number of damaged neighbors. The robber always has at least $r \sqrt{n}$ unguarded neighbors. 

During every \emph{even} round, the robber goes to any unguarded undamaged neighbor. If in any even round $2i$ the robber has no unguarded undamaged neighbor, then every unguarded neighbor of the robber's vertex in the previous odd round $2i-1$ had at least $r \sqrt{n}$ damaged neighbors. The closed neighborhoods of the unguarded neighbors of the robber's vertex $u$ have no intersection besides $u$ since $H$ avoids $C_3$ and $C_4$, so there must be at least $(r \sqrt{n}) (r \sqrt{n}-1) = \Omega(n)$ damaged vertices if the robber is ever forced to go to a damaged vertex in an even round. Thus the robber can safely damage $\Omega(n)$ vertices in $G$.
\end{proof}

The next theorem has almost the same proof as the corresponding theorem in \cite{CopThrot2}, with the main difference being that in this construction we use the graphs from Lemma \ref{dmg_lem} instead of arbitrary graphs with $\Omega(\sqrt{n})$ cop number, and the robber uses a specific evasion strategy that safely damages many vertices instead of an arbitrary evasion strategy.

\begin{thm}
For $n$ sufficiently large, there exist connected graphs $X$ of order $n$ with $\thd(X) = \Omega(n^{2/3})$.
\end{thm}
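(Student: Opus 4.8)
The plan is to build $X$ by gluing together $t = \Theta(n^{1/3})$ disjoint copies $H_1,\dots,H_t$ of the graph supplied by Lemma \ref{dmg_lem}, each of order $m = \Theta(n^{2/3})$, and then linking them into a single connected graph. Fix a constant $c<1$ (say $c=\tfrac12$); then each $H_j$ is essentially a Moore graph of degree $\Theta(\sqrt m)=\Theta(n^{1/3})$ in which the robber can safely damage $\Omega(m)=\Omega(n^{2/3})$ vertices and evade forever against any $c\sqrt m$ cops. I would connect the gadgets through a single central vertex $w$ by attaching to each $H_j$ a long \emph{corridor} path $Q_j$ of length $p=\Theta(n^{2/3})$ joining a designated vertex $v_j\in H_j$ to $w$. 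The orders balance as $t(m+p)+1=\Theta(n^{1/3}\cdot n^{2/3})=\Theta(n)$, and the constants are chosen so the order is exactly $n$ (padding one corridor if needed). This mirrors the construction behind Corollary \ref{cor:HighCopThrot} in \cite{CopThrot2}; the only change is that the $\Omega(\sqrt m)$-cop-number gadgets are replaced by the damage-forcing gadgets of Lemma \ref{dmg_lem}, and the robber runs the specific damaging strategy of that lemma.

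Next I would bound $k+\dmg_k(X)$ below by $\Omega(n^{2/3})$ for every $k$, splitting into two cases. If $k\ge \tfrac{c}{2}\,t\sqrt m=\Omega(n^{2/3})$, then trivially $k+\dmg_k(X)\ge k=\Omega(n^{2/3})$. Otherwise $k<\tfrac{c}{2}\,t\sqrt m$, and I would exhibit a robber strategy forcing $\dmg_k(X)=\Omega(n^{2/3})$. Partition $V(X)\setminus\{w\}$ into the $t$ regions $R_j=V(H_j)\cup V(Q_j)$; since the cops place first and number fewer than $\tfrac{c}{2}t\sqrt m$, pigeonhole yields a region $R_j$ containing fewer than $c\sqrt m$ cops. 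The robber places in the Moore part of $H_j$ (possible, since fewer than $c\sqrt m$ cops dominate fewer than $m$ vertices) and runs the Lemma \ref{dmg_lem} strategy inside $H_j$. The cops originating in $R_j$ number fewer than $c\sqrt m$ for the entire game, so by Lemma \ref{dmg_lem} they can never capture the robber; the only way to push the cop count inside $H_j$ past the $c\sqrt m$ threshold is to route reinforcements from $w$ or other regions, and each such cop must traverse the length-$p$ corridor $Q_j$, hence cannot reach $H_j$ before round $p$. During those first $p$ rounds the Lemma strategy damages a fresh vertex on essentially every even round, so the robber safely damages $\Omega(p)=\Omega(n^{2/3})$ vertices of $H_j$ before any reinforcement arrives. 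Thus $\dmg_k(X)=\Omega(n^{2/3})$, so $k+\dmg_k(X)=\Omega(n^{2/3})$ in this case as well, and taking the minimum over $k$ gives $\thd(X)=\Omega(n^{2/3})$.

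The main obstacle is the reinforcement bookkeeping: I must verify that the corridors are long enough that no cop can enter $H_j$ within the window in which the robber accumulates its damage, while keeping the total order $\Theta(n)$ so that $p$ and $m$ can both be $\Theta(n^{2/3})$. This requires checking two things: first, that the $\Omega(p)$ damage collected in the first $p$ rounds stays below the $\Omega(m)$ saturation threshold of Lemma \ref{dmg_lem} (so the robber is never prematurely forced onto an already-damaged vertex), which holds because $p=\Theta(m)$ with a sufficiently small constant; second, that cops lying on $Q_j$ itself—already counted in the $<c\sqrt m$ budget of $R_j$—remain harmless even after walking into $H_j$, since the total in $H_j$ never exceeds $c\sqrt m$ and Lemma \ref{dmg_lem} then guarantees perpetual evasion. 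A secondary point is matching constants in the pigeonhole step so that ``fewer than $c\sqrt m$ cops in $R_j$'' holds with the same $c<1$ for which Lemma \ref{dmg_lem} supplies a gadget; the threshold $\tfrac{c}{2}t\sqrt m$ together with absorbing the central vertex $w$ out of the region counts handles this. Apart from these verifications, the argument is a direct transcription of the cop-throttling lower bound in \cite{CopThrot2}, with capture replaced by sustained damage.
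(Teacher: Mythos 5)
Your proposal is correct and follows essentially the same route as the paper: the paper also builds a spider with $\lfloor n^{1/3}\rfloor$ legs of length about $n^{2/3}$, replaces the far half of each leg by a copy of the graph from Lemma \ref{dmg_lem}, sends the robber to the leg with the fewest cops, and uses the corridor length to delay reinforcements while the Lemma \ref{dmg_lem} strategy damages a new vertex every even round. Your additional bookkeeping (the explicit pigeonhole threshold and the check that the damage accumulated in $p$ rounds stays below the gadget's saturation level) just makes explicit what the paper leaves implicit.
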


\begin{proof}
We construct a connected graph $X$ with $\thd(X) = \Omega(n^{2/3})$ by starting with a spider of order $n$ with $\lf n^{1/3}\rf$ legs of length approximately $n^{2/3}$. Then, we replace approximately half of each leg farthest from the center vertex by a copy of one of the graphs $G$ from Lemma \ref{dmg_lem} with the same order as the replaced vertices. Each copy is connected by a single vertex to the end of the leg that remains. If the number of cops does not already give the bound $\thd(X) = \Omega(n^{2/3})$, then the robber will start on the copy of $G$ on the leg of the spider with the fewest cops. It will take at least $\Omega(n^{2/3})$ rounds for any cops on other legs to reach the robber's leg, so the robber can use the strategy in Lemma \ref{dmg_lem} to safely visit a new undamaged vertex in every even round until it has damaged at least $\Omega(n^{2/3})$ vertices. 
\end{proof}

\begin{subsection}{Graph families such that $\thd(G)=\thc(G)-1$}\label{subsec:boundtight}

Although Corollary \ref{cor:characterization} provides a complete characterization of graphs $G$ that satisfy $\thd(G)=\thc(G)-1$, the given conditions are not easy to verify. Therefore, further study of this equality is useful. In order to find several families of graphs that achieve this equality, we now turn our attention to the $k$-radius of a graph and use the following result.

\begin{prop}\cite{BPPR17}\label{prop:radkCopThrot}
If $G$ is a connected graph on $n$ vertices and $1 \leq k \leq n$ is an integer, then $\capt_k(G)\geq \rad_k(G).$
\end{prop}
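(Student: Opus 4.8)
The plan is to show that the robber can survive at least $\rad_k(G)$ rounds against any team of $k$ cops, which immediately yields the lower bound on capture time. Recall that the $k$-radius is the minimum over all sets $S$ of $k$ vertices of the eccentricity of $S$; that is,
\[
\rad_k(G) = \min_{|S| = k}\ \max_{v \in V(G)}\ \min_{s \in S} \dist(v,s).
\]
The governing intuition is that no matter how the cops arrange themselves, some vertex is far from all of them, and a cop can close this gap by only one unit per round.

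First I would fix an arbitrary initial placement of the $k$ cops on a set $S$ with $|S| \le k$. Because $\rad_k(G)$ is defined as a minimum of eccentricities, the eccentricity of $S$ is at least $\rad_k(G)$, so there is a vertex $v$ with $\min_{s \in S}\dist(v,s) \ge \rad_k(G)$. Since the robber places after the cops in round $0$, the robber chooses such a vertex $v$. It then remains to argue that from this configuration the cops need at least $\rad_k(G)$ rounds to capture the robber, even if the robber adopts the simplest possible strategy.

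The main step is to track the quantity $m_i$, the minimum distance in $G$ from the robber's current vertex to the nearest cop at the end of round $i$; by the placement above, $m_0 \ge \rad_k(G)$. In each subsequent round the cops move first, and since every cop traverses at most one edge, each individual cop–robber distance decreases by at most $1$; hence the minimum of these distances also decreases by at most $1$, so after the cops' move the minimum distance is at least $m_i - 1$. If this value is positive the robber has not been captured and may simply stay on its vertex, giving $m_{i+1} \ge m_i - 1$. By induction, the minimum distance after the cops' move in round $t$ is at least $\rad_k(G) - t$. Capture in round $t$ requires this distance to equal $0$, forcing $t \ge \rad_k(G)$; as this holds for every cop strategy, $\capt_k(G) \ge \rad_k(G)$.

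The parts requiring the most care are the move-order bookkeeping and the ``decreases by at most one'' claim. The latter must be phrased for the minimum over all cops simultaneously: a cop that is not currently closest could become closest, but its distance was strictly larger and can drop by at most one, so the minimum is still protected. I would also dispose of the degenerate regime separately, namely that when $k < c(G)$ we have $\capt_k(G) = \infty$ and the inequality is immediate, while the argument above applies once $k \ge c(G)$, using connectedness of $G$ to ensure all distances are finite and that the robber always retains the option of remaining on its vertex without moving onto a cop.
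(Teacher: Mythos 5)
Your proof is correct and matches the approach of the cited result: as the paper itself notes just after the statement, the proof in \cite{BPPR17} uses a stationary robber, which is exactly your strategy of placing at a vertex whose distance to the cop set is at least $\rad_k(G)$ and observing that the minimum cop--robber distance drops by at most one per round. The bookkeeping of the minimum over all cops and the degenerate case $k < c(G)$ are handled correctly, so nothing further is needed.
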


The proof of Proposition \ref{prop:radkCopThrot} uses a stationary robber, but such a strategy is not optimal for damage. We now prove the analogous result using a different robber strategy.
\begin{prop}\label{prop:radk}
If $G$ is a connected graph on $n$ vertices and $1 \leq k \leq n$ is an integer, then $\dmg_k(G)\geq \rad_k(G) - 1.$
\end{prop}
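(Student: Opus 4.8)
The plan is to exhibit a single robber strategy that forces at least $\rad_k(G) - 1$ damaged vertices against \emph{any} cop placement and \emph{any} subsequent cop movement; since $\dmg_k(G)$ is exactly the damage the robber can guarantee, this yields the bound. Write $r = \rad_k(G)$ and assume $r \geq 2$, the cases $r \leq 1$ being trivial. The starting observation is that because $\rad_k(G)$ is the minimum over all placements of $k$ cops of the eccentricity of that placement, every initial cop placement $S_0$ admits a vertex $w$ with $\dist(w, S_0) \geq r$; set $m = \dist(w, S_0) \geq r$. The key idea, departing from the stationary robber used for Proposition \ref{prop:radkCopThrot} (which damages only one vertex), is to have the robber sprint \emph{outward} along a geodesic from the cops toward this far vertex $w$, collecting a new vertex in every round.

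Concretely, I would fix a cop $c^{\ast} \in S_0$ nearest to $w$, so $\dist(c^{\ast}, w) = m$, together with a shortest path $P = (p_0 = c^{\ast}, p_1, \dots, p_m = w)$, so that $\dist(p_i, w) = m - i$ for every $i$. The robber places at $p_2$ in round $0$ and then moves to $p_{2+j}$ in round $j$, finishing at $w = p_m$. If this is executed without capture, the robber damages the $m - 1$ distinct vertices $p_2, p_3, \dots, p_m$, and since $m \geq r$ this already gives $\dmg_k(G) \geq m - 1 \geq r - 1$. Starting at $p_2$ rather than $p_0$ or $p_1$ is precisely what accounts for the $-1$: placing on $p_1$ would leave the robber adjacent to $c^{\ast}$ and hence captured in the next round, so two vertices of $P$ must be sacrificed.

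The crux is verifying safety against all $k$ cops simultaneously, not merely against $c^{\ast}$, and in particular against any cop lurking near $w$. Here I would use only two facts: each cop has moved at most $j$ times by the end of round $j$, and $\dist(s, w) \geq m$ for every $s \in S_0$ (since $m$ is the distance from $w$ to the nearest cop). For a cop that started at $s \in S_0$ and sits at $q$ at the end of round $j$, the triangle inequality gives $\dist(p_{2+j}, q) \geq \dist(p_{2+j}, s) - j$, and $\dist(p_{2+j}, s) \geq \dist(s, w) - \dist(p_{2+j}, w) \geq m - (m - 2 - j) = j + 2$, so $\dist(p_{2+j}, q) \geq 2$; the same estimate applied with the robber still on $p_{j+1}$ after the cops move shows the robber is at distance at least $1$ from every cop mid-round as well (and the case $j = 0$ covers the initial placement). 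I expect this uniform distance bookkeeping to be the main point requiring care. Once it is in place the count $m - 1 \geq r - 1$ is immediate, and the argument is robust against adaptive cop movement, since it relies only on the unit speed of the cops and on the initial distance from $w$ to $S_0$.
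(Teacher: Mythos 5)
Your proposal is correct and follows essentially the same route as the paper: choose a vertex $x$ (your $w$) at maximum distance $m \geq \rad_k(G)$ from the initial cop set, take a shortest path from the nearest cop to $x$, place the robber at the vertex of that path at distance $2$ from that cop, and run outward to collect $m-1$ damaged vertices. The only difference is that you explicitly carry out the triangle-inequality bookkeeping showing the robber stays at distance at least $2$ from every cop at the end of each round (hence is never captured), a verification the paper leaves implicit.
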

\bpf
First, note that if $\rad_k(G) \leq 1$, then $\dmg_k(G) \geq 0$ is trivially satisfied. Next, suppose $\rad_k(G) \geq 2$ and consider an arbitrary initial placement of $k$ cops on a subset $S\subseteq V(G)$ of vertices. Choose a vertex $x\in V(G)$ such that $d(S,x)$ is maximum. Choose $u\in S$ such that $d(u,x) = d(S, x) $. Let $P$ be a shortest path in $G$ from $u$ to $x$. Place the robber on the vertex $y$ in $V(P)$ such that $d(y,u)=2$. If the robber moves towards $x$ along the path $P$ in each round, then $|V(P)|-2$ vertices are damaged. Since $|V(P)|-1\geq \rad_k(G)$, this means $\dmg_k(G)\geq \rad_k(G)-1$.
\epf

As noted in \cite[Remark 3.2]{CRthrottle}, Proposition \ref{prop:radkCopThrot} yields $\thc(G) \geq \underset{1 \leq k \leq n}{\min}\{k+\rad_k(G)\}$ as an immediate corollary. Proposition \ref{prop:radk} leads us to the following analogous result for $\thd(G)$. 

\begin{cor}\label{cor:thdANDradk}

For any graph $G$, $\thd(G) \geq \underset{1 \leq k \leq n}{\min}\{k + \rad_k(G)\} - 1.$
\end{cor}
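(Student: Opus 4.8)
The plan is to obtain this bound as an immediate consequence of Proposition \ref{prop:radk}, transporting the per-$k$ inequality through the minimum that defines $\thd(G)$. Since Proposition \ref{prop:radk} already does the genuine work—producing a robber strategy that damages at least $\rad_k(G)-1$ vertices for each fixed $k$—the corollary should reduce to a bookkeeping step about how minima interact with a termwise inequality.

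First I would fix $G$ and set $n=|V(G)|$. For each integer $k$ with $1 \leq k \leq n$, Proposition \ref{prop:radk} gives $\dmg_k(G) \geq \rad_k(G)-1$, and adding $k$ to both sides yields $k+\dmg_k(G) \geq k+\rad_k(G)-1$ for every such $k$. This is the only place the actual game-theoretic content enters; everything after is formal.

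Next I would take the minimum over $1 \leq k \leq n$. A termwise inequality $a_k \geq b_k$ forces $\min_k a_k \geq \min_k b_k$ (if $k^*$ realizes the left minimum then $a_{k^*} \geq b_{k^*} \geq \min_k b_k$), so with $a_k = k+\dmg_k(G)$ and $b_k = k+\rad_k(G)-1$ I obtain
\[
\thd(G) = \min_{1 \leq k \leq n}\{k+\dmg_k(G)\} \geq \min_{1 \leq k \leq n}\{k+\rad_k(G)-1\} = \min_{1 \leq k \leq n}\{k+\rad_k(G)\}-1,
\]
where in the last equality the additive constant $-1$ is pulled outside the minimum. This is exactly the claimed inequality.

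The step to be careful about—and the main obstacle—is the hypothesis rather than the algebra. Proposition \ref{prop:radk} is proved for \emph{connected} graphs, since its robber strategy walks along a shortest $u$--$x$ path and needs $d(S,x)$ to be finite. For a disconnected $G$, a cop set $S$ missing some component forces $\rad_k(G)=\infty$, and the termwise inequality $\dmg_k(G)\geq\rad_k(G)-1$ can break; for instance $\overline{K_n}$ has $\thd(\overline{K_n})=2$ while the standard convention would give $\min_k\{k+\rad_k\}-1=n-1$. Thus to justify the statement for \emph{all} graphs I would either restrict to connected $G$ (where the argument above is complete) or, in the disconnected case, confirm the intended convention for $\rad_k$ and argue component-by-component, so that the right-hand minimum effectively ranges only over those $k$ for which every component is occupied and $\rad_k(G)$ is finite—precisely the regime in which the shortest-path construction of Proposition \ref{prop:radk} applies inside the relevant component.
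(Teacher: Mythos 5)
Your argument is exactly the paper's: the corollary is stated without proof as an immediate consequence of Proposition \ref{prop:radk}, obtained by precisely the termwise inequality $k+\dmg_k(G)\geq k+\rad_k(G)-1$ and the monotonicity of $\min$ that you describe. Your caveat about connectivity is also well taken and is a genuine catch rather than an obstacle to your proof: Proposition \ref{prop:radk} assumes $G$ is connected, and the corollary's phrase ``any graph'' is too broad as literally written, since $\overline{K_n}$ has $\thd(\overline{K_n})=2$ while $\min_{1\leq k\leq n}\{k+\rad_k(\overline{K_n})\}-1=n-1$; the statement should be read as applying to connected graphs, under which restriction your proof is complete.
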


We have established in Proposition \ref{prop:INEQthcthd} that for any nontrivial graph $G$, $\thd(G) \leq \thc(G) -1$.  While we are also interested in graphs where $\thd(G) < \thc(G) -1$ (see Section \ref{sec:diff}), we now turn our attention to instances when this bound is an equality. Using our previous results about $k$-radius, we show the desired equality holds for several classes of graphs.
 
\begin{prop}\label{prop:minradk}
If $\thc(G)=\underset{1 \leq k \leq n}{\min}\{k + \rad_k(G)\}$, then $\thd(G)=\thc(G) - 1$.
\end{prop}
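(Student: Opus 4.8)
The plan is to chain together the inequalities already established in the excerpt and show they collapse to equality under the hypothesis. I have three facts at my disposal: the general upper bound $\thd(G) \leq \thc(G) - 1$ from Proposition \ref{prop:INEQthcthd}, the lower bound $\thd(G) \geq \min_{1 \leq k \leq n}\{k + \rad_k(G)\} - 1$ from Corollary \ref{cor:thdANDradk}, and the hypothesis $\thc(G) = \min_{1 \leq k \leq n}\{k + \rad_k(G)\}$. The strategy is simply to substitute the hypothesis into the lower bound and squeeze.

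First I would write $\thd(G) \geq \min_{1 \leq k \leq n}\{k + \rad_k(G)\} - 1$ by Corollary \ref{cor:thdANDradk}. Then, using the hypothesis to replace the minimum with $\thc(G)$, this lower bound becomes $\thd(G) \geq \thc(G) - 1$. Combining this with the upper bound $\thd(G) \leq \thc(G) - 1$ from Proposition \ref{prop:INEQthcthd} forces $\thd(G) = \thc(G) - 1$, which is exactly the claim. The whole argument is a two-line sandwich.

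There is essentially no obstacle here, since all the heavy lifting was done in the prior results; the only thing to be careful about is the hypothesis $|V(G)| \geq 2$ needed for Proposition \ref{prop:INEQthcthd} to apply. For $G$ the single-vertex graph the statement degenerates, so I would either assume $G$ is nontrivial throughout this subsection or note that the equality is understood for graphs of order at least two. Assuming that convention, the proof is just the direct substitution and squeeze described above.

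\begin{proof}
Suppose $\thc(G)=\min_{1 \leq k \leq n}\{k + \rad_k(G)\}$. By Corollary \ref{cor:thdANDradk},
\[
\thd(G) \geq \min_{1 \leq k \leq n}\{k + \rad_k(G)\} - 1 = \thc(G) - 1,
\]
where the equality follows from the hypothesis. On the other hand, Proposition \ref{prop:INEQthcthd} gives $\thd(G) \leq \thc(G) - 1$. Combining these two inequalities yields $\thd(G) = \thc(G) - 1$.
\end{proof}
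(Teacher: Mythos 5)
Your proof is correct and follows exactly the same argument as the paper: apply Corollary \ref{cor:thdANDradk}, substitute the hypothesis, and squeeze against Proposition \ref{prop:INEQthcthd}. Your remark about the nontriviality hypothesis needed for Proposition \ref{prop:INEQthcthd} is a reasonable extra caution, but the approach is identical.
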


\bpf
Suppose $\thc(G) = \underset{1 \leq k \leq n}{\min}\{k + \rad_k(G)\}.$  By Corollary \ref{cor:thdANDradk}, \[\thd(G) \geq \underset{1 \leq k \leq n}{\min}\{k + \rad_k(G)\} -1 = \thc(G)-1.\]  Since $\thd(G) \leq \thc(G)-1$ by Proposition \ref{prop:INEQthcthd}, it follows that $\thd(G) =\thc(G)-1.$
\epf

Next, we apply known results about graphs $G$ for which $\thc(G)=\underset{1 \leq k \leq n}{\min}\{k + \rad_k(G)\}$ in order to show that $\thd(G)=\thc(G)-1$ for these graphs. Recall that a chordal graph is a graph in which every induced cycle is a $C_3$.

\begin{prop}\cite{CRthrottle}
For any tree or cycle $G$ on $n$ vertices, $\thc(G)=\underset{1 \leq k \leq n}{\min}\{k + \rad_k(G)\}$.
\end{prop}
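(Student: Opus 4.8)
The plan is to prove the two inequalities separately. The $\ge$ direction requires no new work: it is exactly the observation recorded just after Proposition~\ref{prop:radkCopThrot}, since $\capt_k(G)\ge\rad_k(G)$ for every $k$ gives $\thc(G)=\min_{1\le k\le n}\{k+\capt_k(G)\}\ge\min_{1\le k\le n}\{k+\rad_k(G)\}$. So only the reverse inequality needs attention. For that it suffices to exhibit, for a value $k^\ast$ realizing $\min_k\{k+\rad_k(G)\}$, a placement and strategy of $k^\ast$ cops that captures the robber within $\rad_{k^\ast}(G)$ rounds; this gives $\capt_{k^\ast}(G)\le\rad_{k^\ast}(G)$ and hence $\thc(G)\le k^\ast+\capt_{k^\ast}(G)\le\min_k\{k+\rad_k(G)\}$. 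In both cases the cops start on a set $S$ of size $k^\ast$ realizing $\rad_{k^\ast}(G)$, so every vertex lies within distance $\rad_{k^\ast}(G)$ of $S$, and the task reduces to converting this covering into capture without wasting rounds.

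For a tree $T$ I would use a pursuit-and-confinement argument. Removing $S$ leaves a forest of components; the robber begins in one of them and can never move onto a vertex of $S$ (immediate capture) nor pass between components (every such path runs through $S$), so it is confined to a single component throughout. I would track the potential $\Phi$ equal to the eccentricity of the responsible cop's current vertex measured inside the robber's currently reachable subterritory. The key local fact is that when a cop at $u$ steps one vertex toward the robber to a neighbor $w$, the robber's reachable territory shrinks to the subtree hanging off $w$, whose eccentricity from $w$ is at most $\text{ecc}(u)-1$; since the robber's subsequent move stays inside this smaller subtree, $\Phi$ drops by at least $1$ each round. Starting from $\Phi\le\rad_{k^\ast}(T)$ (the robber's initial component is covered to depth $\rad_{k^\ast}(T)$ by its bounding cops), capture occurs within $\rad_{k^\ast}(T)$ rounds.

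For a cycle $C_n$ the one-cop analysis breaks down, since a single cop never captures on $C_n$ for $n\ge4$, so $\capt_1(C_n)=\infty$ even though $\rad_1(C_n)$ is finite. I would first observe that any $k$ realizing $\min_k\{k+\rad_k(C_n)\}$ satisfies $k\ge2$, so capture is indeed possible there. Placing $k^\ast\ge2$ cops as evenly as possible realizes $\rad_{k^\ast}(C_n)$ and splits the cycle into arcs; the robber sits inside one open arc and, as on the tree, cannot cross either bounding cop without being captured. Having the two cops bounding that arc advance into it from opposite ends reduces the arc length by two per round and strictly decreases the robber's distance to the nearer cop, so the robber is squeezed to capture in at most $\lceil(\text{arc length})/2\rceil\le\rad_{k^\ast}(C_n)$ rounds.

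The main obstacle I expect is the bookkeeping in the tree case when the robber's component is adjacent to several cops of $S$ at once: one must argue that $\Phi$ is genuinely bounded by $\rad_{k^\ast}(T)$ rather than by the possibly larger eccentricity of a single cop in all of $T$, and that a well-defined responsible cop can pursue monotonically while the other boundary cops block the remaining entrances to the component. A secondary, purely arithmetic point is verifying on $C_n$ that the even placement realizing $\rad_{k^\ast}$ produces arcs whose half-length is exactly $\rad_{k^\ast}(C_n)$, so the closing-in strategy loses no rounds; this, together with confirming $k^\ast\ge2$, I expect to be routine.
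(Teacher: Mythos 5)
First, a remark on the comparison itself: the paper does not prove this proposition --- it is quoted from \cite{CRthrottle} --- so there is no in-paper argument to measure your proposal against. Judging it on its own terms, your overall architecture is sound: the $\ge$ direction is immediate from Proposition \ref{prop:radkCopThrot}, and the $\le$ direction correctly reduces to showing that cops starting on a set $S$ realizing $\rad_{k^\ast}(G)$ can capture within $\rad_{k^\ast}(G)$ rounds. Your cycle argument is essentially correct. But your tree argument has a genuine gap, and it is exactly at the point you flag as ``bookkeeping.'' The strategy of one \emph{responsible} cop pursuing while the other boundary cops of the robber's component merely block does \emph{not} achieve the bound $\rad_{k^\ast}(T)$. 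Concretely, take $T=P_n$ with $k^\ast=2$ and cops placed optimally near positions $n/4$ and $3n/4$, so $\rad_2(P_n)\approx n/4$. If the robber starts in the middle component (bounded by both cops) and only one cop advances while the other blocks, the robber retreats toward the blocker and survives roughly $n/2$ rounds --- twice the claimed bound. The issue is that your potential $\Phi$ starts at the eccentricity of the responsible cop \emph{within the component}, which can be as large as $2\rad_{k^\ast}(T)$, not $\rad_{k^\ast}(T)$. The repair requires changing the strategy, not just the bookkeeping: \emph{every} cop bounding the robber's component must advance toward the robber each round, and the potential must be $\max_{x\in R_t} d(x,S_t)$, the distance from the robber's shrinking territory to the \emph{whole set} of cop positions. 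One then checks that the cop of $S_t$ nearest to a vertex $x\in R_t$ is adjacent to $R_t$ and steps onto the first vertex of its path to $x$, so this maximum drops by at least one per round from its initial value $\rad_{k^\ast}(T)$. (Alternatively, since trees are chordal, the tree case follows at once from Proposition \ref{chordRadk}, which gives $\capt_k(G)=\rad_k(G)$ for connected chordal graphs.)

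A secondary, fixable slip is in the cycle case: your claim that \emph{any} $k$ realizing $\min_k\{k+\rad_k(C_n)\}$ satisfies $k\ge 2$ is false. For $C_7$ the minimum $4$ is attained at $k=1$ as well as $k=2,3$, and for $C_3$ it is attained \emph{only} at $k=1$. What you actually need, and what is true for $n\ge 4$, is that \emph{some} minimizing $k$ is at least $2$ (this follows from $2+\rad_2(C_n)\le 1+\rad_1(C_n)$, i.e.\ $\lceil (n-2)/4\rceil\le\lfloor (n-2)/2\rfloor$ for $n\ge4$); the case $n=3$ must be handled separately, which is trivial since $C_3$ is dominated by one vertex so $\capt_1(C_3)=1=\rad_1(C_3)$. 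With that adjustment and the two-sided squeeze you describe, the cycle case goes through: an arc of $g$ vertices between two advancing cops is cleared in $\lceil g/2\rceil\le\rad_{k^\ast}(C_n)$ rounds.
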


\begin{cor}\label{cor:treesAndCycles}
For any tree or cycle $G$, $\thd(G)=\thc(G)-1$.
\end{cor}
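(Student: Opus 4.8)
The plan is to combine two ingredients already established in the excerpt. Proposition~\ref{prop:minradk} states that whenever $\thc(G)=\min_{1\le k\le n}\{k+\rad_k(G)\}$, we automatically get $\thd(G)=\thc(G)-1$. The immediately preceding cited result (from \cite{CRthrottle}) asserts that trees and cycles are precisely graphs for which this radius-based equality holds. So the corollary should follow by simply chaining these two facts: for any tree or cycle $G$, the hypothesis of Proposition~\ref{prop:minradk} is satisfied, hence its conclusion $\thd(G)=\thc(G)-1$ holds.

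Concretely, I would write: let $G$ be a tree or cycle on $n$ vertices. By the cited proposition, $\thc(G)=\min_{1\le k\le n}\{k+\rad_k(G)\}$. This is exactly the hypothesis of Proposition~\ref{prop:minradk}, which therefore yields $\thd(G)=\thc(G)-1$. That is the entire argument; it is a direct two-step deduction with no new content to verify.

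Since this is a corollary, there is essentially no obstacle in the logical chain itself. The only thing worth a sanity check is the edge cases: very small trees or cycles (for instance $K_1$, $K_2$, $P_2$, or $C_3$) where the radius quantities or the $|V(G)|\ge 2$ requirement in Proposition~\ref{prop:INEQthcthd} might behave degenerately. I would verify that the radius-based identity and both throttling numbers are well defined and consistent in these base cases, so that the quoted equality $\thc(G)=\min_k\{k+\rad_k(G)\}$ is genuinely valid as stated for all trees and cycles including the trivial ones. Assuming the cited result is stated for all such $G$ (as it appears to be), the corollary requires no further work beyond invoking the two prior propositions in sequence.
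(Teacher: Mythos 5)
Your proposal matches the paper's intended argument exactly: Corollary~\ref{cor:treesAndCycles} is stated as an immediate consequence of the cited result that $\thc(G)=\min_{1\le k\le n}\{k+\rad_k(G)\}$ for trees and cycles, combined with Proposition~\ref{prop:minradk}. The paper gives no further proof, so your two-step deduction (and your sensible note about checking trivial cases) is precisely what is intended.
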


\begin{prop}\cite{CopThrot2}\label{chordRadk}
For any connected chordal graph $G$ and integer $1 \leq k \leq |V(G)|$, $\capt_k(G)=\rad_k(G)$.
\end{prop}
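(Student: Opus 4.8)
Since Proposition \ref{prop:radkCopThrot} already gives $\capt_k(G) \ge \rad_k(G)$, the task reduces to proving the reverse inequality $\capt_k(G) \le \rad_k(G)$ for connected chordal $G$; here it is worth recalling that chordal graphs are cop-win, so $c(G) = 1 \le k$ and $\capt_k(G)$ is finite for every $k \ge 1$, which means the equality is not vacuously contradicted for small $k$. The plan is to exhibit an explicit cop strategy capturing the robber within $r := \rad_k(G)$ rounds. Place the $k$ cops on a set $S \subseteq V(G)$ with $|S| \le k$ realizing $\rad_k(G)$, so that $d(S,v) \le r$ for every vertex $v$; in particular the robber must make its initial placement within distance $r$ of $S$. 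Let $c_0 \in S$ be a cop nearest the robber's starting vertex, and have this single cop pursue by always stepping to a neighbor lying on a current shortest path from itself to the robber, while the remaining cops may stay put.

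The engine of the argument is a potential function. Writing $x$ for the current robber vertex and $c$ for the current position of the pursuing cop, set $\Phi = d(S,x) - d(c,x)$, where $d(S,\cdot)$ is always measured from the fixed initial set $S$. Since $c_0$ is a nearest cop, $\Phi = 0$ at the start, and since $d(S,x) \le r$ always while $d(c,x) = 0$ at capture, we have $\Phi \le r$ at the moment of capture. By the triangle inequality, stepping onto a shortest-path neighbor $c'$ guarantees $d(c',x') \le d(c,x)$ after any robber move, so the pursuing cop can always keep its distance to the robber non-increasing; the role of the potential is to control the rounds in which that distance fails to strictly decrease. I would prove that $\Phi$ increases by at least one each round: when $d(c,x)$ strictly drops, $\Phi$ rises because $d(S,\cdot)$ changes by at most one per robber move, and when $d(c,x)$ instead stays equal—meaning the robber has just stepped strictly away from the pursuing cop—I must show the robber has also stepped strictly away from $S$, so that $d(S,x)$ increases. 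Combining $\Phi = 0$ initially, $\Phi \le r$ at capture, and a gain of at least one per round yields capture in at most $r$ rounds, giving $\capt_k(G) \le \rad_k(G)$.

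The main obstacle is precisely the ``stays equal'' case, and this is where chordality is indispensable. The claim to establish is: if the cop moves to a shortest-path neighbor $c'$ with $d(c',x) = d(c,x) - 1$ and the robber responds with a move to $x'$ satisfying $d(c',x') = d(c,x)$, then $d(S,x') > d(S,x)$. In a tree this is immediate, since moving strictly away from a vertex on the pursuit path moves one strictly deeper; but for general chordal graphs one must rule out the robber ``sidestepping'' along a short cycle to maintain its distance to the pursuer without retreating from $S$. I expect to extract this from the absence of induced cycles of length at least four—via a perfect elimination ordering or the clique-tree representation of $G$—since it is exactly an induced four-cycle (as in the non-chordal $C_4$) that lets a robber circle a pursuer without losing ground. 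Making this step rigorous may require the cop to select its shortest-path step using the elimination ordering rather than an arbitrary shortest-path neighbor, and verifying that such a consistent choice validates the implication in every round is the crux of the proof; the remaining bookkeeping on $\Phi$ is routine.
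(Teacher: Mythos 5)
This proposition is quoted from \cite{CopThrot2}; the paper gives no proof of its own, so your attempt can only be judged on its internal correctness, and there it has a fatal flaw: the claimed monotonicity of $\Phi = d(S,x) - d(c,x)$ is false, already for trees. Take $G = P_9$ with vertices $1,\dots,9$, $k=2$, and $S=\{3,7\}$, which realizes $\rad_2(P_9)=2$. Let the robber start at vertex $5$, so $\Phi = 2-2 = 0$ with the cop on $3$ pursuing. In round $1$ that cop steps to $4$ and the robber steps to $6$: now $d(c,x)=2$ still, but $d(S,x)=1$ because the robber has approached the \emph{stationary} cop on $7$, so $\Phi=-1$. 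This is exactly your ``stays equal'' case, and the claim you say you must establish there --- that the robber has also stepped strictly away from $S$ --- is simply false: the robber keeps its distance to the pursuer while walking toward a cop your strategy has parked. (Your literal strategy needs $3$ rounds on this instance, versus $\capt_2(P_9)=2$.) Chordality cannot rescue the invariant, since $P_9$ is chordal; the defect is structural, namely that a potential anchored to the whole set $S$ cannot be controlled by a single moving cop.

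The standard repair, and the route taken in \cite{CopThrot2}, is to have \emph{every} cop pursue simultaneously, each using the ``shadow'' strategy anchored at its own starting vertex $u_i$: cop $i$ maintains that after round $t$ it sits at distance $\min(t, d(u_i,x_t))$ from $u_i$ on a shortest path from $u_i$ to the robber's current vertex $x_t$. Chordality is what makes this one-step update of the shadow sustainable --- that is where your instinct about induced cycles of length at least four belongs, not in a statement about $d(S,\cdot)$. Since $\min_i d(u_i,x_t)\le \rad_k(G)$ at every time $t$, taking $t=\rad_k(G)$ forces $t\ge d(u_i,x_t)$ for some $i$, i.e.\ capture within $\rad_k(G)$ rounds. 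Note also that even for a single cop, ``step to an arbitrary neighbor on a shortest path from the cop's \emph{current} position to the robber'' is not the shadow strategy and preserves no useful invariant in general; the anchor at the fixed start vertex is essential.
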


\begin{cor}\label{cor:chordalThrottling}
For any connected chordal graph $G$, $\thd(G)=\thc(G)-1$.
\end{cor}

It is worth noting that the converse of Proposition \ref{prop:minradk} does not hold; that is, there exist graphs such that $\thd(G)=\thc(G)-1$ and $\thc(G)>\underset{1 \leq k \leq n}{\min}\{k + \rad_k(G)\}$. The Petersen graph $P$ provides such an example, which we will examine next. First, recall that a graph $G$ is $\srg(n, k, \lambda, \mu)$ if $|V(G)| = n$, $G$ is $k$-regular, every pair of adjacent vertices in $G$ has $\lambda$ common neighbors, and every pair of non-adjacent vertices in $G$ has $\mu$ common neighbors. The well-known fact that $P$ is $\srg(10, 3, 0, 1)$ is particularly useful for determining $\thd(P)$.

\begin{thm}
For the Petersen graph $P$, $\underset{1 \leq k \leq n}{\min}\{k + \rad_k(P)\}=3$, $\thc(P)=4$, and $\thd(P) = 3$.
\end{thm}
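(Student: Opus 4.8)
The plan is to establish the three quantities separately, reading everything off the parameters $\srg(10,3,0,1)$: $P$ is $3$-regular with radius and diameter $2$, girth $5$, cop number $c(P)=3$, and domination number $\gamma(P)=3$. I would first compute $\min_{1\le k\le n}\{k+\rad_k(P)\}$. Since $P$ is vertex-transitive of diameter $2$, $\rad_1(P)=2$, so $1+\rad_1(P)=3$. Because each closed neighborhood has only $4$ vertices, no two of them cover $V(P)$ (equivalently $\gamma(P)=3>2$), so $\rad_2(P)=2$ (and it is at most the diameter), giving $2+\rad_2(P)=4$; and $\rad_3(P)=1$ since a dominating set of size $3$ exists, giving $3+\rad_3(P)=4$. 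For $4\le k\le 9$ we have $\rad_k(P)\ge 1$, so $k+\rad_k(P)\ge 5$, and $k=n$ gives $10$. Hence the minimum is $3$, attained at $k=1$.

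For $\thc(P)=4$ I would combine the inequality $\capt_k(P)\ge\rad_k(P)$ of Proposition \ref{prop:radkCopThrot} with $c(P)=3$. Since $\capt_k(P)=\infty$ for $k\le 2$, the minimum defining $\thc(P)$ is attained at some $k\ge 3$, where $k+\capt_k(P)\ge k+\rad_k(P)$; this equals $4$ at $k=3$ and is at least $5$ for $k\ge 4$, so $\thc(P)\ge 4$. For the matching upper bound I would verify $\capt_3(P)=1$: place three cops on a dominating set $D$; the robber must start off $D$ (otherwise captured in round $0$), hence adjacent to some cop, and in round $1$ that cop steps onto the robber's vertex before the robber moves, so capture occurs in round $1$. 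Since $\capt_3(P)\ge\rad_3(P)=1$ as well, $\capt_3(P)=1$ and therefore $\thc(P)\le 3+1=4$.

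Finally, $\thd(P)=3$ follows by squeezing. The upper bound is immediate from Proposition \ref{prop:INEQthcthd} together with the value just obtained: $\thd(P)\le\thc(P)-1=3$. For the lower bound I would invoke Proposition \ref{prop:c(G)leqthd(G)}: since $P$ is connected, $c(P)\le\thd(P)$, and $c(P)=3$. Combining gives $3\le\thd(P)\le 3$, so $\thd(P)=3$. This also records that the converse of Proposition \ref{prop:minradk} fails, since here $\thc(P)=4>3=\min_{1\le k\le n}\{k+\rad_k(P)\}$ yet $\thd(P)=\thc(P)-1$.

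The main obstacle is the genuinely game-theoretic step $\capt_3(P)=1$, where the move order is essential: a dominating placement lets a cop occupy the robber's vertex in round $1$ before the robber can respond, and one must also note that three cops cannot force capture in round $0$ (the robber can avoid three of ten vertices), so $\capt_3(P)\ge 1$. The supporting facts $c(P)=3$ and $\gamma(P)=3$ rest on the standard bounds $c(G)\ge\delta(G)$ for girth at least $5$ and $c(G)\le\gamma(G)$, which I would cite; everything else reduces to bookkeeping with the earlier propositions once the small-$k$ values of $\rad_k(P)$ are computed carefully.
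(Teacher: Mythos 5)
Your proposal is correct, and for the first two assertions it follows essentially the same path as the paper (compute $\rad_k(P)$ from $\gamma(P)=3$ and $\rad(P)=2$, then read off $\capt_k(P)$ from $c(P)=\gamma(P)=3$). Where you genuinely diverge is the third assertion: you obtain $\thd(P)=3$ by squeezing, combining $c(P)\leq\thd(P)$ from Proposition \ref{prop:c(G)leqthd(G)} with $\thd(P)\leq\thc(P)-1$ from Proposition \ref{prop:INEQthcthd}, so that $3=c(P)\leq\thd(P)\leq\thc(P)-1=3$. The paper instead computes the entire damage profile: it quotes $\dmg_1(P)=5$ from \cite{CS19}, notes $\dmg_k(P)=0$ for $k\geq\gamma(P)=3$, and spends most of the proof on a game-theoretic argument using the $\srg(10,3,0,1)$ structure to show $\dmg_2(P)=2$, after which $\thd(P)=\min\{6,4,3\}=3$. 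Your route is shorter and cleaner, and it is not circular since both propositions you invoke precede this theorem and are independent of it; what it gives up is the extra information in the paper's proof, namely the exact values $\dmg_k(P)$ for all $k$ (in particular the nontrivial fact $\dmg_2(P)=2$, which is where the strong regularity of $P$ actually gets used). Both arguments equally establish the point the theorem is meant to illustrate, that the converse of Proposition \ref{prop:minradk} fails. One small remark: your justification of $\capt_3(P)=1$ is correct but could be stated even more simply, since $\gamma(P)=3$ gives $\capt_k(P)\leq 1$ for all $k\geq 3$ directly, and $\capt_k(P)\geq 1$ for $k<n$ is automatic.
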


\bpf
In order to find $\underset{1 \leq k \leq n}{\min}\{k + \rad_k(P)\}$, note that $\rad(P) = 2$ and $\gamma(P) = 3$. This gives the following values of $\rad_k(P)$:

\[
\rad_k(P)=
\begin{cases}
    2 &\text{if } k=1,2; \\
    1 &\text{if } 3\leq k\leq 9; \\
    0 &\text{if } k=10.
\end{cases}
\]
So we see that $\underset{1 \leq k \leq n}{\min}\{k + \rad_k(P)\} = 3$.

Next, note that $c(P) = 3$ \cite{AF84} and  since $c(P) = \gamma(P)$, we have the following capture times: 
\[
\capt_k(P)=
\begin{cases}
    \infty &\text{if } k=1,2; \\
    1 &\text{if } 3\leq k\leq 9; \\
    0 &\text{if } k=10.
\end{cases}
\]
These capture times imply that $\thc(P) = 4$.

Finally, we calculate $\thd(P)$ by considering all possible damage numbers. We know that $\dmg_1(P)=5$ \cite{CS19} and since $\gamma(P) = 3$, $\dmg_k(P)=0$ for all integers $3\leq k\leq 10$. We now prove that $\dmg_2(P)=2$ by showing that the robber can always damage two vertices, and that the cops can always prevent the robber from damaging a third vertex. Since $P$ is $\srg(10,3,0,1)$, each pair of adjacent vertices has no common neighbors and each pair of non-adjacent vertices has exactly one common neighbor. Thus, while one cop dominates four vertices, two adjacent cops dominate six vertices and two non-adjacent cops dominate seven.

To show that the robber can always damage two vertices, we note that regardless of cop placement, the robber will always be adjacent to a vertex not dominated by either of the cops. Otherwise, the cops could move such that one of them dominates two of the robber's neighbors, which contradicts $P$ being $\srg(10,3,0,1)$. So, in round $1$, the robber moves to an adjacent non-dominated and undamaged vertex, thus damaging their starting vertex. In round $2$, the robber damages the vertex they occupy and moves to an adjacent non-dominated vertex. Note that the vertex the robber moves to may be its original starting vertex. Thus, $\dmg_2(P)\geq 2$.

To show that the cops can prevent a third damaged vertex, place the cops on non-adjacent vertices so that they dominate seven vertices. Playing according to the robber strategy above, at the start of round $2$, the robber is on a non-dominated vertex. Further, the robber is adjacent to three vertices, namely $u$, the now-damaged starting vertex; $v$, an undamaged but dominated vertex; and $w$, which is undamaged and may or may not be dominated.

If $w$ is dominated, the cops can stay still in round $2$, which forces the robber back to $u$. This damages a second vertex, but the robber is now on a previously damaged vertex. If $w$ is not dominated, then a cop that dominates $v$ stays still, while the other cop moves to dominate $w$. This move is possible since every vertex not adjacent to $w$ has a common neighbor with $w$. Thus, the only non-dominated vertex adjacent to the robber is $u$, and so the robber must move to $u$. By repeating this strategy, the cops restrict the robber's movement to these two damaged vertices. Thus, $\dmg_2(P)\leq 2$.

Therefore, $\dmg_2(P)=2$ and we have the following:
\[
\dmg_k(P)=
\begin{cases}
    5 &\text{if } k=1; \\
    2 &\text{if } k=2; \\
    0 &\text{if } 3\leq k\leq 10.
\end{cases}
\]
These damage numbers imply that $\thd(P)=3$.
\epf

\end{subsection}

\begin{subsection}{Graphs on few vertices}\label{subsec:fewvertices}

In this subsection, we explore the gap between damage and cop throttling in graphs with few vertices. As we have seen previously, when $\gamma(G)$ cops play optimally on a graph $G$, the robber is captured in the first round and no vertices are damaged. As such, we can expect that for graphs with small enough domination numbers, we will be able to restrict the gap between damage and cop throttling to just $1$.

\begin{lem}\label{lem:domNum2}
If $G$ is a nontrivial connected graph with $\gamma(G)\leq 2$, then $\thd(G)=\thc(G)-1$.
\end{lem}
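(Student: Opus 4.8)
The plan is to sandwich $\thd(G)$ between $\gamma(G)$ and $\thc(G)-1$ and show the two ends coincide. First I would assemble the three facts already available in the excerpt: Proposition~\ref{prop:INEQthcthd} gives $\thd(G)\le\thc(G)-1$; the observation of \cite{CRthrottle} that $\thc(G)\le\gamma(G)+1$ gives $\thc(G)-1\le\gamma(G)$; and since $\gamma(G)\le 2$ by hypothesis these combine into the chain $\thd(G)\le\thc(G)-1\le\gamma(G)\le 2$. Consequently the whole lemma reduces to the single lower bound $\thd(G)\ge\gamma(G)$: once that is in hand, the chain becomes $\gamma(G)\le\thd(G)\le\thc(G)-1\le\gamma(G)$, which forces all three quantities to be equal and in particular yields $\thd(G)=\thc(G)-1$.

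So the real content is proving $\thd(G)\ge\gamma(G)$ for $\gamma(G)\le 2$. The case $\gamma(G)=1$ is immediate, since every term in $\thd(G)=\min_{1\le k\le n}\{k+\dmg_k(G)\}$ already satisfies $k+\dmg_k(G)\ge k\ge 1=\gamma(G)$. For $\gamma(G)=2$ I would invoke Corollary~\ref{cor:thdANDradk}, which reduces the task to showing $\min_{1\le k\le n}\{k+\rad_k(G)\}\ge 3$. I would verify this term by term: for $k=1$, since $\gamma(G)\ge 2$ there is no universal vertex, hence $\rad_1(G)=\rad(G)\ge 2$ and $1+\rad_1(G)\ge 3$; for $k=2$, a minimum dominating set of size $2$ witnesses $\rad_2(G)\le 1$ while $\rad_2(G)\ge 1$ because $\gamma(G)=2$ forces $n\ge 4>2$ (so not all vertices lie in the set), giving $2+\rad_2(G)=3$; and for $k\ge 3$ we have $k+\rad_k(G)\ge 3$ trivially. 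Thus $\thd(G)\ge 3-1=2=\gamma(G)$, completing the reduction.

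An alternative, more self-contained route to the only nontrivial step ($\thd(G)\ge 2$ when $\gamma(G)=2$) is to argue directly that $\dmg_1(G)\ge 1$: because $\gamma(G)\ge 2$ there is no universal vertex, so for any placement of the single cop at a vertex $v$ the robber may start at some $w\notin N[v]$; the cop cannot reach $w$ in one move, so the robber survives the first cop move on $w$ and damages it, giving $1+\dmg_1(G)\ge 2$ and hence $k+\dmg_k(G)\ge 2$ for every $k$. I expect this lower bound to be the main obstacle, since the rest is bookkeeping with the already-established inequalities; the two points requiring care are the equivalence ``$\gamma(G)\ge 2$ iff $\rad(G)\ge 2$'' (no universal vertex) and the fact that $\gamma(G)=2$ forces $n\ge 4$, which is exactly what keeps the $k=1$ and $k=2$ radius terms from collapsing below $3$.
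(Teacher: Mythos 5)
Your proof is correct and, in substance, matches the paper's: the paper's argument simply observes that $\gamma(G)=1$ gives $\capt_k(G)=1$, $\dmg_k(G)=0$, while $\gamma(G)=2$ gives $\capt_1(G)\ge 2$, $\capt_2(G)=1$, $\dmg_1(G)\ge 1$, $\dmg_2(G)=0$, hence $\thc(G)=\gamma(G)+1$ and $\thd(G)=\gamma(G)$ --- exactly the facts underlying your sandwich $\gamma(G)\le\thd(G)\le\thc(G)-1\le\gamma(G)$ and your ``alternative route.'' The detour through $\rad_k$ and Corollary~\ref{cor:thdANDradk} is valid but heavier than necessary; the direct observation that no dominating vertex forces $\dmg_1(G)\ge 1$ is all the lower bound requires, and is what the paper uses.
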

\begin{proof}
Suppose $G$ is a graph on $n$ vertices. If $\gamma(G)=1$, then $\capt_k(G)=1$ and $\dmg_k(G)=0$ for all $k<n$; this gives $\thd(G)= 1 = \thc(G)-1$. If $\gamma(G)=2$, then $\capt_1(G)\geq2$ and $\capt_k(G)=1$ for all $2\leq k<n$. Furthermore, we know $\dmg_1(G)\geq1$ and $\dmg_k(G)=0$ for all $2\leq k<n$. Together, this implies that $\thd(G)=2 = \thc(G)-1.$
\end{proof}

Most nontrivial graphs with order at most $6$ have a domination number of $2$. Thus, Lemma \ref{lem:domNum2} and additional consideration for those graphs with $\gamma(G)=3$ allow us to classify all nontrivial connected graphs on at most $6$ vertices as exhibiting a difference of $1$ between damage and cop throttling.

\begin{prop}\label{prop:order6Gap}
If $G$ is a connected graph of order $2\leq n\leq6$, then $\thd(G)=\thc(G)-1$.
\end{prop}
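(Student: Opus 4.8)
The plan is to split into cases according to the domination number and reduce every case to a result already established in this section. Since any connected graph $G$ on $n \geq 2$ vertices has no isolated vertices, Ore's bound gives $\gamma(G) \leq n/2$; hence for $2 \leq n \leq 6$ we have $\gamma(G) \leq 3$, and moreover $\gamma(G) = 3$ forces $n/2 \geq 3$, i.e. $n = 6$. Thus every connected graph of order $2 \leq n \leq 6$ satisfies either $\gamma(G) \leq 2$, or else $\gamma(G) = 3$ with $n = 6$.

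For the first case, Lemma \ref{lem:domNum2} immediately yields $\thd(G) = \thc(G) - 1$, so it remains only to handle connected graphs on exactly six vertices with $\gamma(G) = 3$. In that situation $\gamma(G) = n/2$, so I would invoke the classification of connected graphs attaining the Ore bound (Payan and Xuong; Fink, Jacobson, Kinch, and Roberts): a connected graph $G$ of even order $n$ satisfies $\gamma(G) = n/2$ if and only if $G \cong C_4$ or $G \cong H \circ K_1$ for some connected graph $H$, where $H \circ K_1$ denotes the corona obtained by attaching a single pendant vertex to every vertex of $H$. Since $|V(G)| = 6$ rules out $C_4$, we must have $G \cong H \circ K_1$ with $|V(H)| = 3$, and the only connected graphs on three vertices are $P_3$ and $K_3$.

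This leaves exactly two graphs to verify: $P_3 \circ K_1$ and $K_3 \circ K_1$ (the net). The first is a tree, so Corollary \ref{cor:treesAndCycles} delivers the equality; the second has its unique cycle equal to the triangle $K_3$ and no induced cycle of length at least $4$, so it is chordal and Corollary \ref{cor:chordalThrottling} applies. In both cases $\thd(G) = \thc(G) - 1$, which finishes the proof. The only genuine work is pinning down the graphs with $\gamma(G) = 3$, and the main obstacle is justifying that step cleanly; I would therefore lean on the known classification of Ore-extremal graphs rather than attempt a direct enumeration of all connected graphs on six vertices, though such an enumeration (checking that the only ones with $\gamma(G) = 3$ are these two coronas, each of which is chordal) is a perfectly viable fallback.
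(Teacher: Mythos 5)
Your proof is correct and follows the same overall skeleton as the paper's --- reduce to $\gamma(G)\leq 2$ via Ore's bound $\gamma(G)\leq n/2$ and apply Lemma \ref{lem:domNum2}, then handle the exceptional order-$6$ graphs with $\gamma(G)=3$ by showing each is a tree or chordal and citing Corollaries \ref{cor:treesAndCycles} and \ref{cor:chordalThrottling}. The one step where you genuinely diverge is the identification of those exceptional graphs: the paper finds them by an exhaustive \textit{Sage} search over all connected graphs of order $6$, producing the two graphs in Figure \ref{fig:order6dom3}, whereas you observe that $\gamma(G)=3=n/2$ puts $G$ in the Payan--Xuong / Fink--Jacobson--Kinch--Roberts classification of Ore-extremal graphs, so $G$ must be a corona $H\circ K_1$ with $H\in\{P_3,K_3\}$ (the case $C_4$ being excluded by order). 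Both routes land on the same two graphs --- $P_3\circ K_1$ is a tree and $K_3\circ K_1$ (the net) is chordal --- so the conclusion goes through either way. Your version buys a hand-checkable, citation-based argument that does not depend on trusting code, at the cost of importing a nontrivial external classification theorem; the paper's version is self-contained modulo the computation and fits its broader pattern of computer-assisted enumeration (it reuses the same code for the order-$7$ analysis in the next section).
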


\begin{proof}
It is well-known that $\gamma(G) \leq \frac{n}{2}$ for all connected graphs $G$ with $|V(G)| \geq 2$. If $|V(G)| \leq 5$, then $\gamma(G) \leq 2$ and by Lemma \ref{lem:domNum2}, $\thd(G)=\thc(G)-1$.

Suppose now that $|V(G)|=6$; then, $\gamma(G) \leq \frac{6}{2} = 3$. If $\gamma(G)\leq 2$, then $\thd(G)=\thc(G)-1$ by Lemma \ref{lem:domNum2}. Using the \textit{Sage} code in \cite{code_domnum3}, we find that the only two graphs of order $6$ with $\gamma(G)=3$ are those illustrated in Figure \ref{fig:order6dom3}. 
\begin{figure}[H] \begin{center}
\scalebox{.7}{\includegraphics{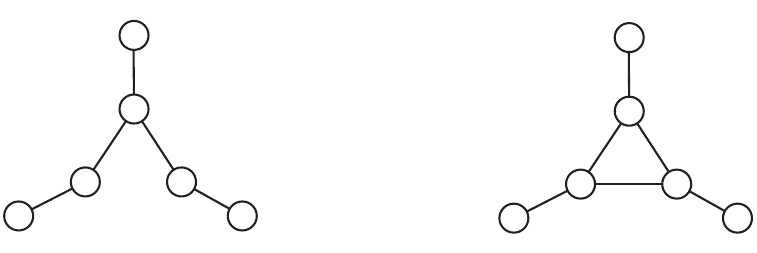}}\\
\caption{The two order-$6$ graphs $G$ with $\gamma(G) = 3$.}\label{fig:order6dom3} 
\end{center}
\end{figure}

Observe that the graph on the left in Figure \ref{fig:order6dom3} is a tree and the graph on the right is chordal. Therefore, by Corollary \ref{cor:treesAndCycles} and Corollary \ref{cor:chordalThrottling} respectively, these two graphs also have $\thd(G)=\thc(G)-1$ as desired. \qedhere

\end{proof}

Thus, any nontrivial connected graph of at most $6$ vertices will exhibit the lowest possible gap of $1$ between its damage and cop throttling numbers. This motivates us to consider what the minimum order is of graphs with a difference of more than $1$ between $\thd(G)$ and $\thc(G)$. More generally, we are interested in finding graphs that exhibit this larger gap of at least $2$.

\end{subsection}
\end{section}


\begin{section}{Differences between damage and cop throttling}\label{sec:diff}

In this section, we turn to examining various graphs for which the gap between damage throttling and cop throttling is at least two. While these are harder to find, infinite families of graphs that realize this gap do exist; in particular, we demonstrate two infinite families in which $\thc(G)$ and $\thd(G)$ remain constant and one in which they grow without bound. A graph exhibiting a gap of three is also presented.

First, we continue our discussion of graphs on few vertices by showing that the smallest graphs exhibiting a gap of two consist of only $7$ vertices.

\begin{prop}
The connected graphs of smallest order such that $\thd(G)\leq\thc(G)-2$ have $7$ vertices; there are thirteen of them in total.

\end{prop}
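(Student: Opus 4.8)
\emph{Proof proposal.} The statement has two parts: a lower bound on the order (that seven vertices are necessary) and an exact count at order seven. The lower bound is already in hand. Proposition \ref{prop:order6Gap} shows that every connected graph $G$ with $2 \le |V(G)| \le 6$ satisfies $\thd(G) = \thc(G) - 1$, so no graph on six or fewer vertices can have $\thd(G) \le \thc(G) - 2$. It therefore remains to (i) exhibit at least one connected graph on seven vertices with gap at least two, and (ii) prove that exactly thirteen of the connected graphs on seven vertices achieve this. The plan for both is a single exhaustive computer search over all $853$ connected (unlabeled) graphs on seven vertices, computing $\thc(G)$ and $\thd(G)$ for each and recording those with $\thc(G) - \thd(G) \ge 2$; equivalently, by Proposition \ref{prop:INEQthcthd}, those for which the bound $\thd(G) \le \thc(G) - 1$ is strict.

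Computing $\thc(G)$ is routine. For each $k$ I compute $\capt_k(G)$ as a backward-induction (value-iteration) fixed point on the configuration graph whose states record the positions of the $k$ cops together with the robber's position, then set $\thc(G) = \underset{1 \le k \le n}{\min}\{k + \capt_k(G)\}$, discarding the values $k < c(G)$ for which $\capt_k(G) = \infty$. The crux is computing $\dmg_k(G)$. Here I would model the damage game as a finite two-player zero-sum game whose states are the multiset of cop positions, the robber's position, and the set of already-damaged vertices, with the robber maximizing and the cops minimizing the final number of damaged vertices. The round structure must be encoded faithfully: round-$0$ placement with cops first, then alternating rounds in which the cops move first and the robber second, a vertex becoming damaged exactly when the robber occupies it in a non-capture round, and the game terminating upon capture.

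The main obstacle, both conceptually and in implementation, is extracting the correct value of this damage game. Because play may continue forever, one cannot simply run finite-horizon minimax; however, the set of damaged vertices is nondecreasing and bounded by $n$, so the minimax value is well-defined and can be obtained by value iteration run until the state values stabilize. Care is needed to model the simultaneous movement of multiple cops and the turn order correctly, and to keep the augmented state space — which carries an extra factor of $2^n$ for the damaged set — small enough to iterate over. In practice only small $k$ are relevant, since $\dmg_k(G) = 0$ whenever $k \ge \gamma(G)$, so the search can be bounded to $k$ well below $n$.

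Finally, to trim the search and to supply an independent sanity check, I would prune candidates using the structural results already established: any tree or cycle (Corollary \ref{cor:treesAndCycles}), any chordal graph (Corollary \ref{cor:chordalThrottling}), and any graph with $\gamma(G) \le 2$ (Lemma \ref{lem:domNum2}) has gap exactly one and hence cannot appear among the thirteen. Thus every gap-at-least-two graph on seven vertices must be non-chordal with $\gamma(G) \ge 3$, which both narrows the candidate set dramatically and provides a cross-check against the brute-force output. Once the search returns the thirteen graphs, I would confirm that they are pairwise non-isomorphic and verify the gap by hand for at least one representative in order to validate the code.

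\endinput
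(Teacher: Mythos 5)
Your reduction to order $7$ via Proposition \ref{prop:order6Gap} and your pruning to non-chordal graphs with $\gamma(G)=3$ match the paper exactly, but from there the two arguments diverge. You propose to brute-force the actual game values: solve $\capt_k(G)$ and $\dmg_k(G)$ for all $853$ connected $7$-vertex graphs by value iteration on a configuration space augmented with the set of damaged vertices. The paper never solves the damage game at all. Its key observation is that once a candidate graph is known to have order $7$, $\gamma(G)=3$, and to be non-cop-win (checked via dismantlability), the throttling numbers are forced: $\capt_2(G)\ge 2$ and $\capt_k(G)=1$ for $3\le k\le 6$ give $\thc(G)=4$, while $\dmg_2(G)\ge 1$ and $\dmg_k(G)=0$ for $k\ge 3$ pin $\thd(G)\in\{2,3\}$, with the gap of two occurring precisely when $\dmg_1(G)=1$. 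That last condition collapses to a static test --- does there exist a vertex $v$ on which a stationary cop confines the robber to a single safe vertex, i.e.\ $E(G-N[v])=\emptyset$ --- so the only computation needed is a domination-number filter, a dismantlability check, and a neighborhood check on $21$ graphs. Your approach buys generality (it would work without noticing that $\thd$ is squeezed into $\{2,3\}$) at the cost of a much heavier and more delicate computation; in particular, your treatment of the infinite-horizon damage game is under-specified. Naive value iteration ``until values stabilize'' is not automatically correct for turn-based games with cycles; you would need to exploit the monotonicity of the damaged set explicitly (e.g.\ backward induction on the damaged set, with a properly initialized fixed point within each stratum) and encode the round-$0$ order (cops place first, robber places second with full knowledge) before the computed values can be trusted. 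If you carry out the paper's reduction first, all of that machinery becomes unnecessary.
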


\begin{proof}
By Proposition \ref{prop:order6Gap}, $\thd(G)=\thc(G)-1$ for any connected graphs of order $6$, so it suffices to consider graphs on order $7$.

For any $7$-vertex graph $G$, $\gamma(G)\leq\frac72$, so $\gamma(G)\in\{1,2,3 \}$. By Lemma \ref{lem:domNum2}, if $G$ has domination number $1$ or $2$, then $\thd(G)=\thc(G)-1$. Thus, we need only consider graphs $G$ of order $7$ with $\gamma(G)=3$. To find these, we will algorithmically check every $7$-vertex graph to see if it can be dominated by three vertices. Using the \textit{Sage} code in \cite{code_domnum3}, we find forty-two such graphs that have $\gamma(G)=3$. Of these, we will show that the twenty-nine graphs displayed in Figure \ref{fig:29GapOfOne} have a gap of only one, but the thirteen graphs in Figure \ref{fig:13GapOfTwo} have the desired gap of two.

\begin{figure}[ht]
    \centering
    \begin{tabular}{|c|m{12.5cm}|}
        \hline
        trees & \includegraphics[scale=.3]{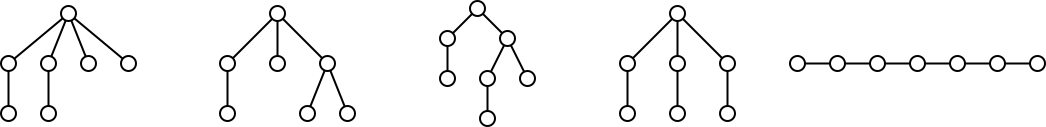} \\
        \hline
        chordal graphs & \includegraphics[scale=.3]{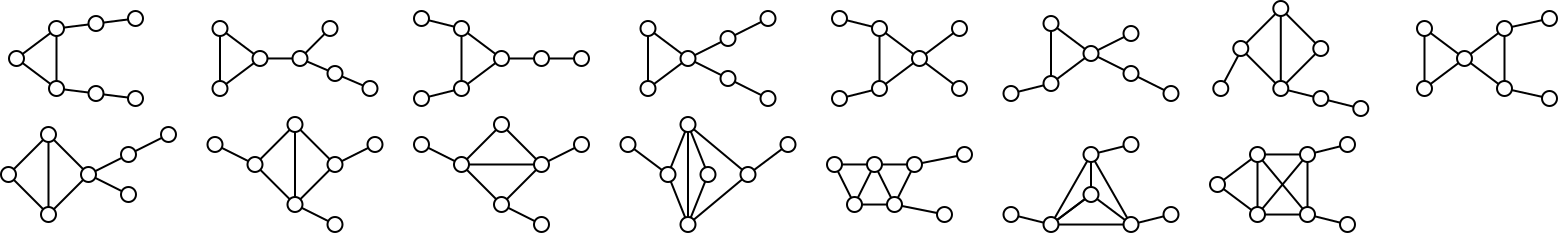}\\
        \hline
        cycle & \includegraphics[scale=.3]{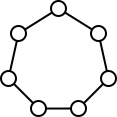}\\
        \hline
        $\dmg_1(G)>1$ & \includegraphics[scale=.3]{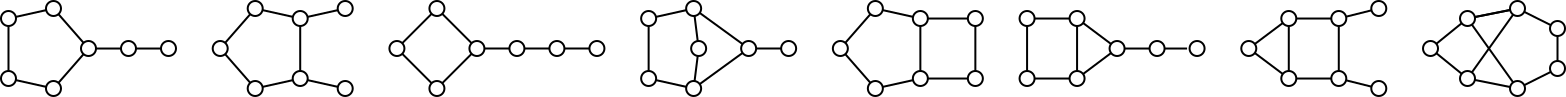}\\
        \hline
    \end{tabular}
    \caption{The twenty-nine order-$7$ graphs $G$ with $\gamma(G) = 3$ but $\thd(G)=\thc(G)-1$.}
    \label{fig:29GapOfOne}
\end{figure}

\begin{figure}[ht]
    \centering
    \includegraphics[scale=.45]{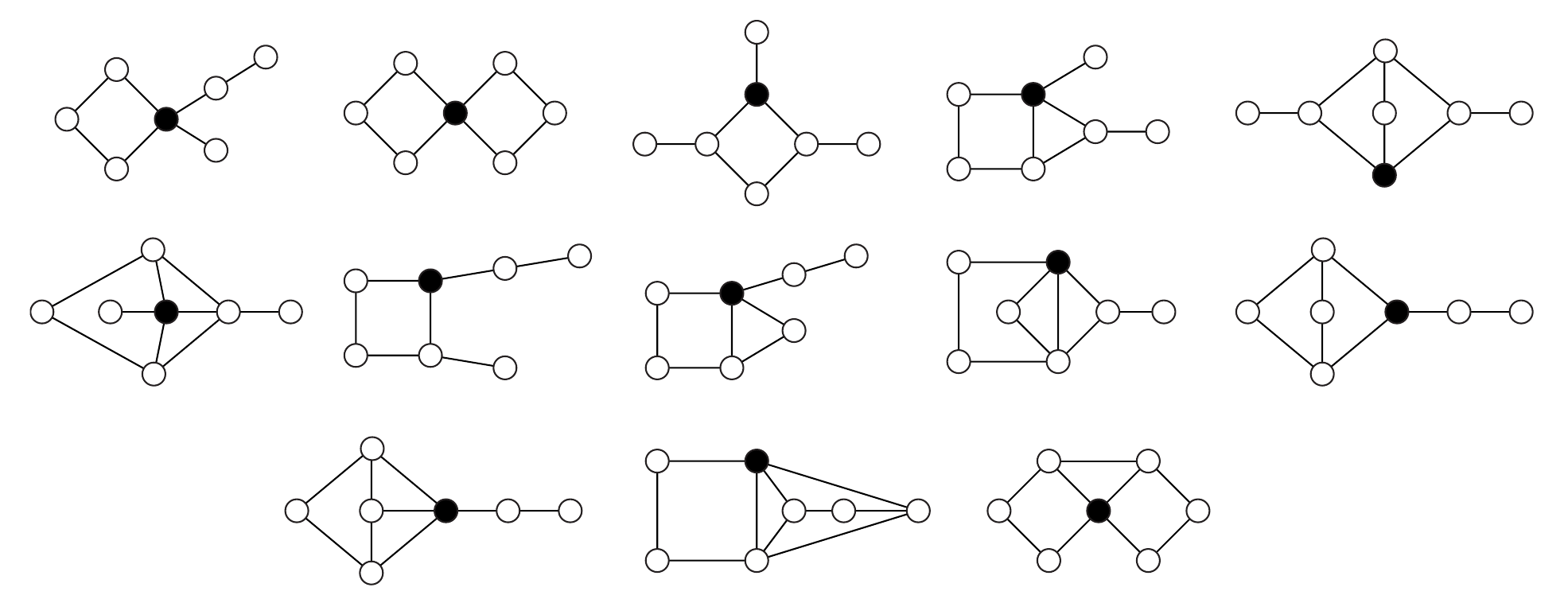}
    \caption{The thirteen order-$7$ graphs $G$ with $\gamma(G) =3$ and $\thd(G)=\thc(G)-2$, with a vertex indicated in black in each graph on which the cop can place to ensure $\dmg_1(G)=1$.}
    \label{fig:13GapOfTwo}
\end{figure}


Observe that of the graphs in Figure \ref{fig:29GapOfOne}, five are trees, fifteen are chordal, and one is a cycle; by Corollaries \ref{cor:treesAndCycles} and \ref{cor:chordalThrottling}, these twenty-one graphs have $\thd(G)=\thc(G)-1$.

For each of the remaining eight graphs in Figure \ref{fig:29GapOfOne} and thirteen graphs in Figure \ref{fig:13GapOfTwo}, we will directly calculate $\thc(G)$ and $\thd(G)$. First, by the Python code in \cite{code_dismantlable}, none of these graphs are dismantlable, and so by \cite{CRbook}, they are not cop-win. Since $\gamma(G)=3$ for each of these graphs, $\capt_2(G)\geq 2$ and $\capt_k(G)=1$ for $3\leq k\leq 6$. Therefore $\thc(G)=4$.

We now calculate $\thd(G)$. Since $\gamma(G)=3$, we have $\dmg_2(G)\geq 1$ and $\dmg_k(G)=0$ for $3\leq k\leq 7$. Observe that if $\dmg_1(G)=1$, then $G$ will have $\thd(G)=2$ and achieve the desired gap; otherwise, if $\dmg_1(G)>1$, then $\thd(G)=3$ and the gap is only one. We thus wish to characterize which of these twenty-one graphs have $\dmg_1(G)=1$.

Note that in each of the thirteen graphs in Figure \ref{fig:13GapOfTwo}, if the cop places on the black vertex, then by passing in each subsequent round, the cop restricts the robber on only damaging one vertex. Thus, for these graphs, $\dmg_1(G)=1$ and so $\thd(G)=\thc(G)-2$, as desired. The eight graphs in the last row of Figure \ref{fig:29GapOfOne} do not contain such a vertex the cop can place on; this implies the robber can move to a new vertex in round $1$ without getting captured in round $2$. As such, $\dmg_1(G)>1$ and $\thd(G)=\thc(G)-1$ for these eight graphs.
\end{proof}

If we consider disconnected graphs, the gap between damage and cop throttling can grow arbitrarily far apart, as exhibited by the following infinite family.

\begin{obs}
For $n \geq 2$,  $\thd(\overline{K_n}) = 2$ and $\thc(\overline{K_n}) - 1 = n - 1$. Thus, as $n$ increases, $\thd(\overline{K_n})$ and $\thc(\overline{K_n}) - 1$ get arbitrarily far apart.
\end{obs}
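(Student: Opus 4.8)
The plan is to verify the two claimed equalities separately, treating $\overline{K_n}$ (the graph on $n$ isolated vertices, with no edges) directly from the definitions, and then observe that the gap $\thc(\overline{K_n}) - 1 - \thd(\overline{K_n})$ grows without bound.

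First I would compute $\thc(\overline{K_n})$. Since no vertex is adjacent to any other, a cop can only capture the robber by occupying the robber's vertex, and a cop never moves anywhere useful once placed. With $k$ cops placed, the robber (placing after the cops) simply chooses any one of the $n-k \geq 1$ unoccupied vertices and sits there forever, so capture is impossible unless every vertex is covered; hence $\capt_k(\overline{K_n}) = \infty$ for $k < n$ and $\capt_n(\overline{K_n}) = 0$. Therefore $\thc(\overline{K_n}) = \min_{1 \le k \le n}\{k + \capt_k(\overline{K_n})\} = n + 0 = n$, giving $\thc(\overline{K_n}) - 1 = n - 1$.

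Next I would compute $\thd(\overline{K_n})$. With a single cop, the robber picks any of the $n - 1 \geq 1$ uncovered vertices and stays, damaging exactly that one vertex, so $\dmg_1(\overline{K_n}) = 1$ and $1 + \dmg_1(\overline{K_n}) = 2$. With $k \geq 2$ cops the damage is still at least $1$ whenever $k < n$ (the robber can always occupy an uncovered vertex, since $n > k$ leaves a free vertex and that vertex can never be guarded), so $k + \dmg_k \geq k \geq 2$ for $k < n$, and $k + \dmg_k = n \geq 2$ at $k = n$. Thus the minimum over $k$ is attained at $k = 1$, yielding $\thd(\overline{K_n}) = 2$.

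Combining these, $\thc(\overline{K_n}) - 1 - \thd(\overline{K_n}) = (n-1) - 2 = n - 3 \to \infty$ as $n \to \infty$, so the two quantities become arbitrarily far apart. The computation is entirely elementary; the only point requiring any care is the observation that on an edgeless graph cops are completely static and the robber, placing second with a free vertex always available, can never be captured or confined to fewer than one damaged vertex. I do not anticipate any genuine obstacle here, since $\overline{K_n}$ is the degenerate case that the paper has already flagged earlier (in the discussion preceding Proposition \ref{prop:c(G)leqthd(G)}) precisely because it violates $c(G) \le \thd(G)$ for disconnected graphs.
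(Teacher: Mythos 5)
Your computation is correct and is exactly the straightforward verification that the paper leaves implicit (the statement is an Observation with no written proof, and the value $\thd(\overline{K_n})=2$ is asserted without argument earlier in Section \ref{sec:dmgCopThrot}). The key points you identify --- cops are immobile on an edgeless graph, so $\capt_k=\infty$ for $k<n$ and $\capt_n=0$ giving $\thc=n$, while one cop already limits damage to the single vertex the robber sits on, giving $\thd=2$ --- are precisely what is needed.
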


However, we would like to find examples of infinite families of connected graphs that exhibit the desired gap. While graphs with a dominating vertex $v$ will not realize such a gap, we can carefully reduce the number of vertices that $v$ dominates to restrict the robber's motion and lower the $k$-damage numbers.  This motivates the following definitions which are illustrated in Figure \ref{fig:gearaccordion}.

\begin{defn}
Recall the definition of the wheel graph $W_n$ of order $n$ as the graph obtained by adding a dominating vertex, known as the hub, to the cycle $C_{n-1}$. For each integer $\ell \geq 2$, the \emph{gear graph} of order $2\ell + 1$ is denoted $G_{2 \ell + 1}$ and is obtained from $W_{2 \ell + 1}$ by deleting every other edge incident to the hub. 
\end{defn}
\begin{defn}
Further, recall the definition of the fan graph $F_n$ as the graph obtained by adding a dominating vertex to a the path $P_{n-1}$. We can now analogously define the \emph{accordion graph} for each integer $\ell \geq 2$, denoted $A_{2 \ell}$, as the graph obtained from $F_{2\ell}$ by deleting every other edge incident to the dominating vertex.
\end{defn}
\begin{figure}[H] \begin{center}
\scalebox{.65}{\includegraphics{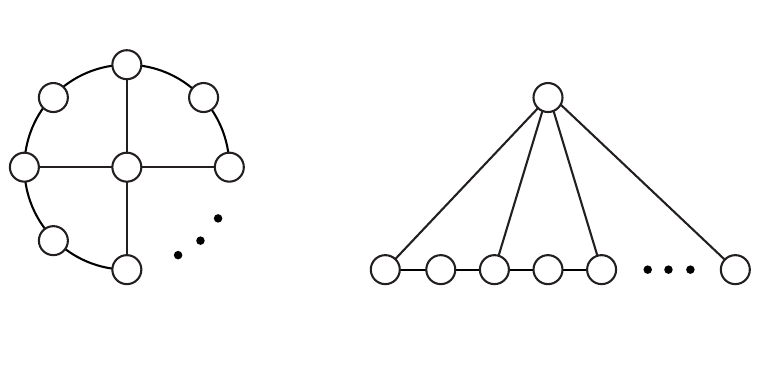}}\\
\caption{The gear and accordion graphs are shown on the left and right respectively.}\label{fig:gearaccordion} 
\end{center}
\end{figure}

Next, we show that the gear and accordion graphs are infinite families of connected graphs for which $\thd(G) = \thc(G) - 2$.

\begin{thm} For each integer $\ell\geq 4$, if $G \in \{G_{2\ell + 1}, A_{2 \ell}\}$, then $\thd(G)=\thc(G)-2$. Thus,
there exist infinitely many connected graphs $G$ such that $\thd(G) < \thc(G) - 1$.
\end{thm}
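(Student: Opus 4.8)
The plan is to compute $\thc(G)$ and $\thd(G)$ exactly for each family and read off the difference; both graphs can be handled in parallel because each is a cycle (respectively path) together with a single hub joined to every other rim vertex, so the rim vertices not adjacent to the hub form an independent set whose only neighbors are the ``spoke'' vertices adjacent to the hub. I would isolate the two or three small values of $k$ that can possibly realize each throttling number (for larger $k$ the summand $k$ already exceeds the target) and determine $\capt_k$ and $\dmg_k$ only for those $k$. Since Proposition \ref{prop:INEQthcthd} already gives $\thd(G)\le\thc(G)-1$, the entire content is to improve this by one, that is, to establish $\thd(G)\le\thc(G)-2$, and then to verify the reverse inequality so that the gap is exactly two.

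For the cop-throttling side I would first pin down the cop number and the capture times at the efficient number of cops, using the hub as the key structural feature: a cop occupying the hub threatens every spoke vertex in one move and thereby confines the robber to the independent set of non-spoke rim vertices. This gives an explicit cop strategy that bounds $\capt_k(G)$ (hence $\thc(G)$) from above, while a robber-evasion argument in the spirit of the $\rad_k$ lower bound (Proposition \ref{prop:radkCopThrot}) supplies the matching lower bound and fixes $\thc(G)$ as a constant realized at a specific $k$. For the damage side the same hub structure is exploited differently: I would give a placement and movement of the cops that \emph{pens} the robber by occupying the two rim-neighbors of its current vertex, so that after its forced initial move the robber can damage only a bounded number of vertices, yielding the upper bound on $\dmg_k(G)$ for the efficient $k$. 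The matching lower bound on $\dmg_k(G)$ would come either from Corollary \ref{cor:thdANDradk} together with the $k$-radius of these graphs, or from a direct robber strategy that advances along a spoke/rim geodesic exactly as in the proof of Proposition \ref{prop:radk}.

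Combining the two computations yields $\thd(G)=\thc(G)-2$, and since the argument is uniform in $\ell$ the conclusion holds for all $\ell\ge 4$; the hypothesis $\ell\ge 4$ ensures the rim is long enough that the domination number, $k$-radius, and capture times take their generic values rather than the degenerate small-order values already settled in Proposition \ref{prop:order6Gap}. The final assertion that there are infinitely many connected graphs with $\thd(G)<\thc(G)-1$ is then immediate. I expect the main obstacle to lie in the two lower bounds rather than in the strategies: specifically, one must show that capture genuinely costs one more unit than damage-limitation, i.e.\ that no placement of the damage-efficient number of cops can force capture, so that $\thc(G)$ sits strictly above what the pen-the-robber damage strategy achieves. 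Ruling out clever cop strategies that simultaneously confine and capture within the smaller budget is the delicate step, and I would approach it with an adversary/invariant argument exhibiting, after each cop move, a safe rim vertex to which the robber can retreat.
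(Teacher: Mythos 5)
Your overall plan---compute $\thc$ and $\thd$ exactly for both families by exploiting the hub---is the same as the paper's, and your structural observation that the non-spoke rim vertices form an independent set whose only neighbors are hub-dominated spokes is exactly the right lever. The capture side also matches the paper: two cops start on the hub, flank the robber's two spoke neighbors in round $1$, and capture in round $2$, giving $\capt_2(G)=2$ and $\thc(G)=4$ (the lower bound $\thc(G)\geq 4$ is not delicate at all: $G$ is not cop-win, $\gamma(G)>2$ forces $\capt_2(G)\geq 2$, and $\capt_3(G)\geq 1$; no adversary/invariant argument is needed).

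The genuine gap is on the damage side. The mechanism you describe---``pen the robber by occupying the two rim-neighbors of its current vertex'' so that ``after its forced initial move'' the damage is bounded---is really the two-cop capture strategy in disguise, and it cannot deliver what the theorem needs. Cops place before the robber, so occupying the robber's two spoke neighbors requires a round-$1$ reaction by two cops; that yields $\dmg_2(G)=1$ and hence only $\thd(G)\leq 2+1=3$, which is no better than the already-known $\thd(G)\leq\thc(G)-1$. To get $\thd(G)=2$ you must show $\dmg_1(G)=1$, and the point is that a \emph{single stationary} cop on the hub already achieves this: the robber must start on a non-spoke rim vertex (every spoke is dominated), and every neighbor of that vertex is a spoke, so any move results in immediate capture and the robber is frozen on its starting vertex forever---there is no ``forced initial move.'' Thus exactly one vertex is damaged, $\thd(G)\leq 1+\dmg_1(G)=2$, and $\thd(G)\geq 2$ since $G$ has no dominating vertex. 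Without replacing your two-cop penning step by this one-cop freezing argument, the proof establishes only a gap of one, not two.
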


\bpf
Consider $G_{2\ell+1}$ for any $\ell\geq 4$. By placing and remaining on the hub vertex, one cop can restrict the robber to damaging only a single vertex. Since $G_{2\ell+1}$ does not have a dominating vertex, the robber can always damage at least one vertex, and so we have $\dmg_1(G_{2\ell+1})=1$. This implies $\thd(G_{2\ell+1})\leq2$. Since $\thd(G_{2\ell+1})=1$ only when there is a dominating vertex, $\thd(G_{2\ell+1})=2$.

Observe  that $G_{2\ell+1}$ is not cop-win, so consider $k=2$. Place both cops on the hub vertex. Assuming the robber places optimally on a vertex non-adjacent to the hub, the cops can move to either side of the robber in round $1$ and capture them in round $2$, giving $\capt_2(G_{2\ell+1})\leq 2$. Further, $\capt_2(G_{2\ell+1})>1$, since $\gamma(G_{2\ell+1})>2$ for $\ell\geq 4$. Thus, $\capt_2(G_{2\ell + 1})= 2$ and $\thc(G_{2\ell+1})\leq 4$. Since $\capt_3(G_{2\ell+1})>0$, we see that $\thc(G_{2\ell+1})>3$ and conclude that $\thc(G_{2\ell+1})=4$.

Using the same argument, we can show for the accordion graph $A_{2\ell}$ that $\thd(A_{2\ell})=2$ and $\thc(A_{2\ell})=4$ for all $\ell\geq 4$. Since $G_{2\ell+1}$ and $A_{2\ell}$ are infinite families, there are infinitely many connected graphs such that $\thd(G)<\thc(G)-1$, as desired.
\epf

Notice that for each $G \in \{G_{2\ell+1}, A_{2\ell}\}$, $c(G)=\thd(G)$. The next proposition shows that for graphs $G$ with this property, we can more easily determine whether $\thd(G)<\thc(G)-1$.


\begin{prop}\label{prop:copGammaThrot}
If $c(G)=\thd(G)$, then one of the following is true:
\begin{enumerate}
    \item $\gamma(G)=c(G)$
    \item $\thd(G)<\thc(G)-1$
\end{enumerate}
\end{prop}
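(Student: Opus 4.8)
The plan is to prove the dichotomy directly. Assuming $c(G)=\thd(G)$, I will show that whenever the second alternative fails — that is, whenever $\thd(G)=\thc(G)-1$, which, together with Proposition \ref{prop:INEQthcthd} (giving $\thd(G)\leq\thc(G)-1$ always), is the only way for $\thd(G)<\thc(G)-1$ to be false — the first alternative $\gamma(G)=c(G)$ must hold.

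The engine of the argument is Corollary \ref{cor:characterization}. If $\thd(G)=\thc(G)-1$, it furnishes an integer $\ell\geq c(G)$ that simultaneously realizes $\thc(G)$ and $\thd(G)$ and satisfies $\dmg_\ell(G)=\capt_\ell(G)-1$. In particular $\ell+\dmg_\ell(G)=\thd(G)=c(G)$. Since $\ell\geq c(G)$ and $\dmg_\ell(G)\geq 0$, this single equation forces $\ell=c(G)$ and $\dmg_{c(G)}(G)=0$. So the whole problem reduces to understanding graphs for which exactly $c(G)$ cops can hold the robber to zero damage; notably, the value $\capt_\ell(G)$ itself is never needed, and no competing case such as $\ell=c(G)+1$ can arise.

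The remaining step is the observation that $\dmg_k(G)=0$ if and only if $k\geq\gamma(G)$. One direction is the familiar fact that $\gamma(G)$ cops placed on a minimum dominating set capture the robber in round $1$, so no vertex is ever held through a cop move; the other direction is that if $k<\gamma(G)$, then no placement of $k$ cops dominates $V(G)$, so the robber can start on an undominated vertex, survive the first cop move, and thereby damage at least one vertex. Applying this with $k=c(G)$ to the conclusion $\dmg_{c(G)}(G)=0$ gives $\gamma(G)\leq c(G)$. Combined with the standard inequality $c(G)\leq\gamma(G)$ (again from placing cops on a dominating set), this yields $\gamma(G)=c(G)$, which is alternative (1).

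I expect the only delicate point to be the bookkeeping around the definition of damage at the initial placement — specifically, verifying the equivalence $\dmg_k(G)=0\iff k\geq\gamma(G)$, since this hinges on whether a vertex counts as damaged when the robber is captured in round $1$ while still sitting on its starting vertex. Once that equivalence is pinned down, the rest is a short chain of inequalities, with Corollary \ref{cor:characterization} doing the heavy lifting of pinning $\ell$ down to $c(G)$.
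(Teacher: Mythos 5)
Your proof is correct, but it reaches the conclusion by a genuinely different route from the paper. The paper works on the capture side: from $\thc(G)=c(G)+1$ it deduces that either $\capt_{c(G)}(G)=1$ or $\capt_{c(G)+1}(G)=0$, handles the first case by noting that capture time $1$ forces a dominating placement (so $\gamma(G)\leq c(G)\leq\gamma(G)$), and disposes of the second by observing that $\capt_k(G)=0$ forces $k=n$, hence $c(G)=n-1$, and then classifying the graphs with $\gamma(G)=n-1$. You instead work on the damage side: Corollary \ref{cor:characterization} hands you an $\ell\geq c(G)$ with $\ell+\dmg_{\ell}(G)=\thd(G)=c(G)$, which immediately pins $\ell=c(G)$ and $\dmg_{c(G)}(G)=0$, and the equivalence $\dmg_k(G)=0\iff k\geq\gamma(G)$ finishes. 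This buys a cleaner argument --- the paper's second case never arises, and no classification of extremal graphs is needed --- at the cost of the damage/domination equivalence, which is in any case consistent with the conventions the paper itself relies on ($\dmg_k(G)=0$ for $k\geq\gamma(G)$ in the Petersen computation; $\dmg_1(G)\geq 1$ when $\gamma(G)=2$ in Lemma \ref{lem:domNum2}), so the ``delicate point'' you flag resolves in your favor: the robber's starting vertex is not counted as damaged when capture occurs in round $1$. One small patch is needed: Proposition \ref{prop:INEQthcthd} is stated only for $|V(G)|\geq 2$, so your reduction ``alternative (2) fails iff $\thd(G)=\thc(G)-1$'' does not cover the trivial graph, where $\thd(K_1)=\thc(K_1)=1$; handle that case separately by noting $\gamma(K_1)=c(K_1)=1$, as the paper does.
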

\bpf
Let $c(G)=\thd(G)$ and assume $\thd(G)\not<\thc(G)-1$. If $G$ is trivial, $c(G)=\thd(G)=\gamma(G)$. If $G$ is non-trivial, then by Proposition \ref{prop:INEQthcthd}, $\thd(G)=c(G)=\thc(G)-1$ and so either $\capt_{c(G)}(G)=1$ or $\capt_{c(G)+1}(G)=0$. 

If $\capt_{c(G)}(G)=1$, then by definition, some arrangement of $c(G)$ cops dominate the graph and $\gamma(G)\leq c(G)$. However, any dominating set of vertices in a graph form an initial cop placement which catches the robber, so clearly $c(G)\leq \gamma(G)$. Therefore, $\gamma(G)=c(G)$. 

If $\capt_k(G)=0$, then $k=n$. Thus, if $\capt_{c(G)+1}(G)=0$, we must have $c(G)=n-1$ and so G has at least one edge. Recall that $c(G)\leq \gamma(G)$ and for graphs with at least one edge, $\gamma(G)\leq n-1$. Therefore $n-1=\gamma(G)$ and this implies the graph has at most one edge. The only graphs with one edge are the disjoint union of $K_2$ with some number of isolated vertices. However, we can easily observe that for this class of graphs, $\gamma(G)=c(G)$.
\epf




Thus far, we have considered several graphs $G$ that satisfy $\thd(G) = \thc(G) - 2$. We now find a graph that exhibits a larger difference between the cop and damage throttling numbers.

\begin{thm}\label{thm:GapOf3Graph}
There exists a connected graph $G$ with $\thd(G) \leq \thc(G) - 3$.
\end{thm}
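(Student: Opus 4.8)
The plan is to exhibit a single explicit connected graph $G$ and compute both $\thd(G)$ and $\thc(G)$ directly, showing the former is at least three less than the latter. The guiding principle, following the gear and accordion constructions, is to build a graph on which a few well-placed cops can \emph{contain} the robber --- confining it to a small protected region so that it damages very few vertices --- while the graph is globally expensive to clear, so that guaranteeing capture forces either many cops or many rounds. Concretely, I would aim for values such as $\thd(G)=3$ and $\thc(G)=6$.

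Before constructing $G$, note a structural constraint that shapes the search. If $\dmg_1(G)=1$ then $\thd(G)\le 2$, so by Proposition~\ref{prop:c(G)leqthd(G)} we would have $c(G)\le 2$; moreover, graphs where one cop pins the robber to a single vertex tend to admit a two-cop flanking capture in very few rounds, which caps the gap at two (exactly what happens for the gear graphs). I would therefore \emph{not} try to push $\thd$ down to $2$, but instead seek a graph where the minimum of $k+\dmg_k(G)$ is attained at a value like $3$ (for instance $\dmg_1(G)=2$ or $\dmg_2(G)=1$), paired with a domination number and cop number large enough to force $\thc(G)=6$.

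The verification then splits into two largely independent computations. To bound $\thd(G)$ from above I would give explicit \emph{containment} strategies: place cops on a small control set and argue, using the protected-neighborhood structure, that the robber can never transit to a new undamaged vertex without being captured, so $\dmg_k(G)$ is small for the relevant $k$. For the matching lower bound I would combine $c(G)\le\thd(G)$ (Proposition~\ref{prop:c(G)leqthd(G)}) with the radius bound $\dmg_k(G)\ge \rad_k(G)-1$ (Proposition~\ref{prop:radk}) and, where needed, the shortest-path robber strategy from that proof. To pin down $\thc(G)$ I would first determine $c(G)$ and $\gamma(G)$; the latter controls when $\capt_k(G)=1$ (namely for $k\ge\gamma(G)$) and fixes the large-$k$ behavior of $k+\capt_k(G)$. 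I would then bound the intermediate capture times by exhibiting explicit capture strategies (upper bounds) and robber evasion strategies together with $\capt_k(G)\ge\rad_k(G)$ (Proposition~\ref{prop:radkCopThrot}) for the lower bounds. Assembling $\thd(G)=\min_k\{k+\dmg_k(G)\}$ and $\thc(G)=\min_k\{k+\capt_k(G)\}$ yields the claimed gap.

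The main obstacle is the tension built into the construction itself: I need one regime (few cops) in which containment is \emph{cheap}, and simultaneously every regime in which capture is \emph{expensive}. Neither the radius bounds nor Lemma~\ref{dmgCaptIneqLemma} are strong enough to certify this on their own, so the crux is the adversarial analysis --- proving that $k$ cops genuinely cannot capture in fewer than the claimed number of rounds (a separate robber evasion argument for each small $k$), and that the containing cops truly prevent the robber from ever reaching a fresh undamaged vertex. Because these game-tree arguments are delicate and easy to get wrong, I expect to confirm the key damage numbers and capture times by an exhaustive computer check, as the paper already does for domination number and dismantlability.
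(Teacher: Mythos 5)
Your plan is, in outline, exactly the paper's proof: the paper exhibits an explicit $14$-vertex graph, shows $\gamma(G)=5$, establishes $\capt_2(G)\ge 4$, $\capt_3(G)\ge 3$, $\capt_4(G)\ge 2$ by exhaustive (computer-assisted) case analysis of cop placements and explicit robber evasion strategies to get $\thc(G)=6$, and then shows $\dmg_2(G)=1$ by a containment strategy (two cops whose closed neighborhoods leave the robber no edge to move along) together with $\dmg_1(G)\ge 2$ via the radius, giving $\thd(G)=3$. Your structural observation that one should \emph{not} aim for $\dmg_1(G)=1$ (which would force $c(G)\le 2$ and, in practice, a cheap two-cop capture) but rather for the minimum of $k+\dmg_k(G)$ to be attained at $k=2$ with $\dmg_2(G)=1$ is precisely the design choice the paper makes, and your proposed toolkit (domination number for the large-$k$ capture times, explicit evasion strategies plus computer verification for the small-$k$ lower bounds, explicit containment for the damage upper bounds) is the same as the paper's.

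The genuine gap is that you never produce the graph. The theorem is an existence statement, and its entire content is the witness: a connected graph on which two cops can confine the robber to a single damaged vertex while no placement of two, three, or four cops captures quickly enough to bring $\thc$ below $6$. You correctly identify this tension as ``the main obstacle,'' but identifying the obstacle is not the same as overcoming it; without a concrete graph, none of the verification steps you list can even be started, and it is not a priori clear that the two requirements are simultaneously satisfiable (indeed, the paper needs $14$ vertices and a fairly delicate structure with $\gamma(G)=5$ and $\rad(G)=4$ to make both halves work). To complete the argument you would need to exhibit such a graph explicitly and then carry out the case analysis you describe; as written, the proposal is a correct strategy document rather than a proof.
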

\begin{proof}
We will show that the graph $G$ in Figure \ref{fig:GapOf3Graph} has $\thc(G)=6$ and $\thd(G)=3$.

\begin{figure}[H] \begin{center}
\scalebox{.7}{\includegraphics{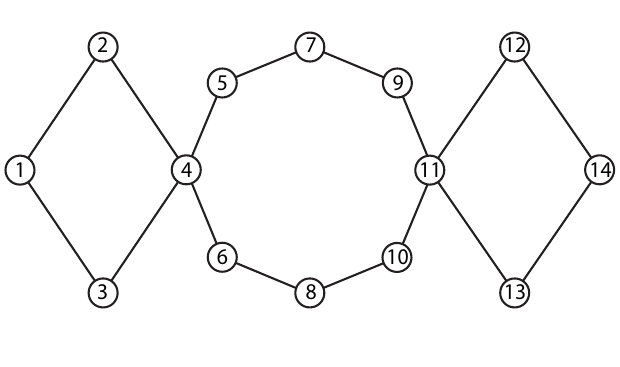}}\\
\caption{A graph $G$ with $\thc(G) - \thd(G) = 3$.}\label{fig:GapOf3Graph} 
\end{center}
\end{figure}

We show first that $\gamma(G)=5$. Note that $\{1,4,9,10,14 \}$ is a five-vertex dominating set. Suppose that a dominating set $S$ of only four vertices exists. In order to dominate vertices $1$, $7$, $8$, and $14$, $S$ must include a vertex from each of $\{1,2,3\}$, $\{5,7,9\}$, $\{6,8,10\}$, and $\{12,13,14\}$, respectively. Since the vertices in these four sets each have degree 2, every vertex in $S$ dominates three vertices and so $S$ can dominate at most twelve vertices. Therefore no such set $S$ of size four exists.


Observe that $G$ is not cop-win; so, we consider $k\geq 2$. By domination, $\capt_k(G)=1$ for $5\leq k\leq 13$ and $\capt_k(G)\geq 2$ for $2\leq k\leq 4$. It remains to show that $\capt_2(G)\geq 4$ and $\capt_3(G)\geq 3$.

Now consider $k=2$. We will show for any cop placement, the robber evades capture for at least four rounds. Given a cop placement, if there exists a vertex that is at least distance $4$ away from both cops, then the robber can stay on that vertex, and will evade capture until at least the fourth round.

So, assume there is no vertex of distance at least $4$ from both cops. We can algorithmically check over all possible cop placements to determine which have this property; for a Python implementation of this, see \cite{code_gapof3}. In total, there are thirty-five such placements, which can be reduced to twelve cases when considering horizontal, vertical, and rotational symmetries of the graph. In each case, the robber's strategy to avoid capture until at least the fourth round is to initially place on
\begin{enumerate}[label=(\alph*)]
    \item vertex 14, if the cop set is \{1,9\}, \{1,11\}, \{2,9\}, \{2,10\}, \{2,11\}, \{4,9\}, \{4,11\}, \{5,9\}, \{5,10\};
    \item vertex 13, if the cop set is \{1,12\}, \{2,12\}; or
    \item vertex 12, if the cop set is \{2,13\}.
\end{enumerate}
The robber should choose an analogous starting vertex for cop sets which are symmetric to these. It is straightforward to verify that the robber cannot be caught in fewer than four rounds given these initial placements. Thus, $\capt_2(G)\geq4$.

For three cops, we proceed similarly to show that no matter where the three cops place, the robber always has a strategy to avoid capture until at least the third round. As before, given an initial cop placement, if there exists a vertex that is at least a distance $3$ from all three cops, then the robber can place on that vertex, and can only be caught in the third round or later.

So assume there is no vertex at least distance $3$ away from all cops. Once again, we will check all possible cop placements to determine which have this property using the Python code in \cite{code_gapof3}. In total, there are sixty-eight such placements, reducible to nineteen cases when accounting for symmetries of the graph. In each case, the robber's strategy to avoid capture until at least the third round is to initially place on
\begin{enumerate}[label=(\alph*)]
    \item vertex 14, if the cop set is \{1,2,11\}, \{1,4,11\}, \{1,5,11\}, \{2,2,11\}, \{2,3,11\}, \{2,4,11\}, \{2,5,11\}, \{2,6,11\}, \{2,7,11\}, \{2,8,11\}, \{4,4,11\}, \{4,5,11\}, \{4,7,11\};
    \item vertex 13, if the cop set is \{1,4,12\}, \{2,4,12\};
    \item vertex 12, if the cop set is \{2,4,13\}; or
    \item vertex 3, if the cop set is \{2,9,11\}, \{2,10,11\}, \{2,11,11\}.
\end{enumerate}
The robber should choose an analogous starting vertex for cop sets which are symmetric to these. It is easy to check that the robber cannot be caught in fewer than three rounds using these initial placements. Thus, $\capt_3(G)\geq3$. We obtain that $\thc(G) = 6$ from the following:
\[
\capt_k(G)
\begin{cases}
    =\infty &\text{if } k=1; \\
    \geq4 &\text{if } k=2; \\
    \geq3 &\text{if } k=3; \\
    \geq2 &\text{if } k=4; \\
    =1 &\text{if } 5\leq k\leq 13; \\
    =0 &\text{if } k=14.
\end{cases}
\]

Now, considering damage, note that $\dmg_k(G)=0$ for $5\leq k\leq 14$. For $k=2$, by placing cops on vertices $4$ and $11$, we restrict the robber to placing and thereafter staying on one of $\{1,7,8, 14\}$. This gives $\dmg_2(G)=1$ and implies that $\dmg_k(G) = 1$ for each $k \in \{3,4\}$. Lastly, if $k=1$, the robber can always place such that they are distance $4$ away from the cop since $\rad(G) = 4$. As such, the robber can damage at least two vertices. In summary:
\[
\dmg_k(G)
\begin{cases}
    \geq2 &\text{if } k=1; \\
    =1 &\text{if } 2\leq k\leq 4; \\
    =0 &\text{if } 5\leq k\leq 14.
\end{cases}
\]
Thus, $\thd(G)=3$, giving a gap of three between the cop and damage throttling numbers.
\end{proof}

\begin{subsection}{The family $H_n$}\label{subsec:hn}
When searching for graphs with $\thd(G)<\thc(G)-1$, it is natural that graphs with high capture time are worth investigating. For each integer $n\geq 7$, let $H_n$ denote the graph illustrated in Figure \ref{fig:HnGraph}. In $\cite{G10}$, it was shown that $\capt(H_n) = n -4$, which can achieved by placing the cop on vertex $3$ and moving the cop along vertices $3,2,4,7,8, \ldots, n$. Furthermore, this is the highest possible $1$-capture time for cop-win graphs on at least seven vertices.  Thus, we dedicate this section to the infinite family $H_n$.

\begin{figure}[H] \begin{center}
\scalebox{1}{\includegraphics{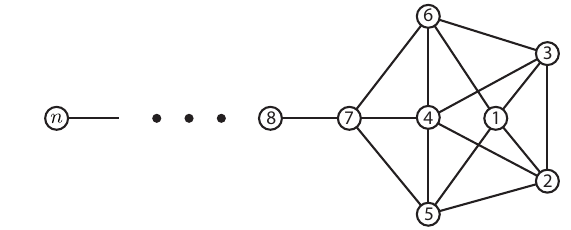}}\\
\caption{The graph $H_n$ for $n \geq 7$.}\label{fig:HnGraph} 
\end{center}
\end{figure}

We refer to the vertices in $\{8,9, \ldots, n\}$ as the \emph{tail} of $H_n$ when $n \geq 8$. As seen with many graphs so far, it is useful to consider the domination number before determining the $k$-capture time and $k$-damage number. This is also the case with $H_n$.

\begin{prop}\label{prop:domHn}
For each integer $n \geq 7$, the domination number of $H_n$ is \[\gamma(H_n) = \left\lceil \frac{n-8}{3} \right\rceil + 2.\]
\end{prop}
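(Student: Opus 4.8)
The plan is to split $V(H_n)$ into the \emph{head} $\{1,\dots,7\}$ and the \emph{tail} $\{8,9,\dots,n\}$, which are joined only by the edge $\{7,8\}$, and to control the two parts independently. The structural facts I would read off from Figure~\ref{fig:HnGraph} are that the only head vertex adjacent to a tail vertex is $7$, the only tail vertex adjacent to a head vertex is $8$, and that $\{8,9,\dots,n\}$ induces a path with $n$ as a leaf. Two consequences drive the whole argument: the closed neighborhood of every vertex in $\{9,\dots,n\}$ is contained in $\{8,\dots,n\}$, and the closed neighborhood of every vertex in $\{1,\dots,6\}$ is contained in $\{1,\dots,7\}$.

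For the upper bound I would exhibit a dominating set of the claimed size. Take an explicit pair of head vertices that dominates $\{1,\dots,7\}$ together with vertex $8$ (such a pair can be read directly from the figure), and adjoin a minimum dominating set of the subpath induced by $\{9,\dots,n\}$, which consists of $\gamma(P_{n-8})=\lceil (n-8)/3\rceil$ vertices. Vertices $\{1,\dots,7\}$ and $8$ are covered by the pair, the vertices $\{9,\dots,n\}$ are covered by the path-dominating set, so the union dominates $H_n$ and has size $\lceil (n-8)/3\rceil+2$. The boundary cases $n\in\{7,8,9\}$ (where the tail is empty or a single vertex) are checked directly and also give the formula.

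For the lower bound let $D$ be any dominating set and write $D=D_1\cup D_2$ with $D_1=D\cap\{1,\dots,7\}$ and $D_2=D\cap\{8,\dots,n\}$, a disjoint union. Because each of the $n-8$ vertices in $\{9,\dots,n\}$ can only be dominated from $\{8,\dots,n\}$, and each vertex of $\{8,\dots,n\}$ lies in the closed neighborhood of at most three vertices of $\{9,\dots,n\}$, counting incidences gives $3|D_2|\ge n-8$, hence $|D_2|\ge \lceil (n-8)/3\rceil$. Independently, the vertices $\{1,\dots,6\}$ can only be dominated from $\{1,\dots,7\}$, and a direct inspection of the head in the figure shows that no single vertex dominates all of $\{1,\dots,6\}$; therefore $|D_1|\ge 2$. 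Since $D_1$ and $D_2$ are indexed by disjoint sets, $|D|=|D_1|+|D_2|\ge \lceil (n-8)/3\rceil+2$, matching the upper bound.

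The main obstacle is making the ``$+2$'' rigorous, that is, guaranteeing that the head and tail bounds are genuinely additive rather than sharing a dominator at the junction vertices $7$ and $8$. The device that resolves this is to count against the \emph{private} target sets $\{9,\dots,n\}$ and $\{1,\dots,6\}$, which sit strictly on opposite sides of the junction and whose dominators therefore necessarily come from the disjoint index sets $\{8,\dots,n\}$ and $\{1,\dots,7\}$ respectively. The only step genuinely depending on the internal wiring of the head is the claim that $\{1,\dots,6\}$ has no single dominator; I would confirm this directly from Figure~\ref{fig:HnGraph}, noting that it is consistent with $\gamma(H_7)=2$.
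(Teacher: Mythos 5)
Your proof is correct and follows essentially the same route as the paper: the same explicit dominating set ($\{1,7\}$ plus every third tail vertex) for the upper bound, and a lower bound obtained by showing the tail forces $\lceil (n-8)/3\rceil$ dominators while the head forces two more. Your incidence-counting argument against the private target sets $\{9,\dots,n\}$ and $\{1,\dots,6\}$ is a slightly more rigorous formalization of the paper's assertion that one ``must include at least every third vertex'' on the tail and that no single vertex dominates $\{1,\dots,6\}$, but it is the same decomposition and the same bound.
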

\begin{proof}
For $n=7$ and $n=8$, a dominating set is $\{1,7 \}$, so $\gamma(H_n)\leq2$ in these cases. Since there is no dominating vertex in $H_n$, $\gamma(H_n)=2=\lceil \frac{n-8}{3} \rceil + 2$ for $n \in \{7,8\}$. For $n=9$, a dominating set is $\{1,7,9\}$, so $\gamma(H_9)\leq 3$. Suppose we can dominate $H_9$ with only two vertices. To dominate vertex $9$, a dominating set must include vertex $8$ or $9$. However, we cannot dominate vertices $1$ through $6$ with only one vertex, so $\gamma(H_9)>2$. Thus, $\gamma(H_9)=3=\lceil \frac{9-8}{3} \rceil + 2$.

For $n\geq 10$, let $S$ be the set of vertices consisting of $1$, $7$, and every third vertex on the tail of $H_n$ starting at $10$; note that this is $\lceil \frac{n-8}{3} \rceil + 2$ vertices. The set $S$ dominates $H_n$ and so $\gamma(H_n)\leq \lceil \frac{n-8}{3} \rceil + 2$. Suppose that $H_n$ can be dominated by $\lceil \frac{n-8}{3} \rceil + 1$ vertices. In order to dominate the tail, we must include at least every third vertex in a dominating set $S$, which requires $\lceil \frac{n-8}{3} \rceil$ vertices. At most, this will dominate vertices $7, 8, \dots,n$. So, only one more vertex can be added to $S$ to dominate vertices $1, 2, \dots,6$, but no such vertex exists. Therefore, $\gamma(H_n)= \lceil \frac{n-8}{3} \rceil + 2$.
\end{proof}

In our investigation of $H_n$, the next step is computing $\capt_k(H_n)$ and $\thc(H_n)$ as follows.

\begin{lem}\label{lem:kCaptValues}
If $n$ and $k$ are integers with $n \geq 7$ and $1 \leq k \leq n$, then \[\capt_k(H_n) = \begin{cases} \lceil \frac{n-3-k}{2k-1} \rceil & \text{if }1 \leq k < \lceil \frac{n-8}{3} \rceil + 2;\\ 1 & \text{if }\lceil \frac{n-8}{3} \rceil + 2 \leq k < n;\\ 0 & \text{if }k=n. \end{cases}\]
\end{lem}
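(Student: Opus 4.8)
The plan is to prove the three cases in the piecewise formula separately, disposing of the two easy ranges quickly and concentrating on the generic range $1 \le k < \lceil \frac{n-8}{3}\rceil + 2 = \gamma(H_n)$ (the equality of the threshold with the domination number comes from Proposition \ref{prop:domHn}). When $k=n$ the cops occupy every vertex, so $\capt_n(H_n)=0$. When $\gamma(H_n) \le k < n$, I would place the $k$ cops on a minimum dominating set: every vertex then lies in some cop's closed neighborhood, so the robber, which must occupy an unoccupied vertex since $k<n$, is adjacent to a cop and is caught in round $1$; since $k<n$ forbids capture in round $0$, this gives $\capt_k(H_n)=1$ exactly. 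The content is entirely in the first case, where I would prove matching upper and lower bounds of $\lceil \frac{n-3-k}{2k-1}\rceil$.

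For the upper bound I would build a cop strategy from the phenomenon behind $\capt_1(H_n)=n-4$: a lone cop is forced to rate $1$ only because the head makes it enter the tail from the single vertex $7$ and sweep the dead-end path $7,8,\dots,n$ from one side, with no way to split its attention between the head ``pocket'' at $1,2,3$ and the tail. With $k$ cops this bottleneck disappears. I would commit a small amount of cop resources to locking down the bounded head gadget (a closed end cleared at rate $1$, which is what produces the fixed $-3$ offset), and place the remaining cops in round $0$ at the centers of a balanced partition of the tail, so that each sweeps its own sub-interval from the middle at rate $2$. The robber, placed after the cops, cannot cross a cop on the path and is therefore trapped in a single part; the capture time equals the clearing time of the \emph{slowest} part, and a balanced placement equalizes the parts. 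Removing the $k$ cop-occupied vertices and the size-$3$ head contribution leaves an effective length $n-3-k$, and balancing the rate-$1$ head work against the rate-$2$ interior sweeps yields the common clearing time $\lceil \frac{n-3-k}{2k-1}\rceil$.

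For the lower bound I would argue against every cop strategy, not just the one above, via a shrinkage potential; note that Proposition \ref{prop:radkCopThrot} alone is insufficient here, since for $H_n$ the capture time strictly exceeds $\rad_k(H_n)$. I would define the robber's safe region to be the set of tail vertices it can still reach without being caught, show that after any initial placement this region has length at least $n-3-k$, and show that in any single round the $k$ cops can shorten it by at most $2k-1$ (a cop pressed against the closed head-end contributes only $1$, while each other cop, constrained by the one-dimensional geometry of the tail, controls only a length-$2$ window per round). The robber simply stays inside the region, retreating from the advancing front toward the dead-end at $n$, and so survives until round $\lceil \frac{n-3-k}{2k-1}\rceil$. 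Combining the two bounds gives the claimed value.

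The hard part will be pinning down the exact constants $2k-1$ and $n-3-k$ rather than their asymptotics. The delicate step is the lower bound's claim that the safe region shrinks by at most $2k-1$ per round: a moving cop can dominate many vertices at once, so the argument must exploit the path-like structure of the tail to cap each cop's per-round progress and must hold uniformly over all cop behaviors, whereas the upper bound needs only one explicit strategy. The small head gadget and the dead-end each force separate, careful bookkeeping, and the $\pm 1$ discrepancies (why $2k-1$ and not $2k$ or $2k-2$) live precisely in these boundary effects. I would verify the endpoints as consistency checks: $k=1$ recovers the known value $n-4$, and the first-case expression stays at least $2$ throughout its range, with the domination argument taking over exactly at $k=\gamma(H_n)$.
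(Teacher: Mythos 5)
Your handling of the cases $k=n$ and $\gamma(H_n)\le k<n$, and your upper-bound strategy for $2\le k<\gamma(H_n)$, essentially coincide with the paper's: the paper places one cop on vertex $3$ and spreads the remaining $k-1$ cops along the path $P$ with $V(P)=\{3,2,4,7,8,\ldots,n\}$ (which has $n-3$ vertices) so that every vertex of $P$ is within $\lceil \frac{n-3-k}{2k-1}\rceil$ of a cop; the endpoint cop covers $d+1$ vertices and each interior cop $2d+1$, which is exactly your ``rate $1$ plus rate $2(k-1)$'' accounting. The genuine gap is in your lower bound. First, the bookkeeping of your potential is inconsistent: the tail $\{8,\ldots,n\}$ has only $n-7$ vertices, so a ``set of tail vertices'' cannot have size $n-3-k$ when $k\le 3$; the missing length must come from the head, which is precisely the part your potential ignores. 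Second, the entire content of the lower bound is the claim that the safe region shrinks by at most $2k-1$ per round uniformly over all cop behaviors, and you assert this rather than prove it; a ``set of vertices the robber can still safely reach'' in a pursuit game is not a union of intervals with a well-defined length, and controlling its evolution under arbitrary cop trajectories is exactly the hard part you have deferred. Third, your robber strategy (``retreat toward the dead end at $n$'') fails outright against placements with a cop at or near vertex $n$, and it never exploits the pocket vertices $1$, $5$, $6$ that lie off the sweep path $P$, which is where $\capt_k(H_n)$ gets its excess over $\rad_k(H_n)$ in the first place.

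The paper avoids the dynamic potential altogether. Its lower bound is a case analysis on how many cops initially place in the head $\{1,\ldots,7\}$: since $\gamma(H_7)=2$, at most two are useful there, and in each of the resulting cases ($0$, $1$, or $2$ head cops) the robber has an explicit response --- by pigeonhole, a vertex of $P$ at distance at least $\lceil\frac{n-3-k}{2k-1}\rceil$ from every cop on which it simply sits, or, in the one case where the longest cop-free subpath of $P$ contains vertex $2$, a moving strategy starting at vertex $5$ and running along $7,8,9,\ldots$ ahead of the pushing cop. Reducing to (almost entirely) stationary robber strategies turns the lower bound into a static distance computation, which is what your shrinkage lemma would have to reprove dynamically. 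To salvage your framework you would need to define the potential on all of $P$ rather than the tail, account for the pocket vertices, and actually prove the per-round $2k-1$ bound against arbitrary cop behavior; as written, the lower bound is a plan rather than an argument.
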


\begin{proof}
Recall that the domination number of $H_n$ is $\gamma(H_n) = \left\lceil \frac{n-8}{3} \right\rceil + 2$ by Proposition \ref{prop:domHn}, $\capt_k(H_n) = 1$ when $\gamma(H_n) \leq k < n$, and $\capt_ n(H_n) = 0$. Next, consider $2 \leq k < \gamma(H_n)$ and the following strategy for $k$ cops. Place one cop on vertex 3 and distribute the remaining cops on the path $P$ with $V(P) = \{3,2,4,7,8, \ldots , n\}$ to optimize cop throttling on $P$. Let $\mathcal{C}$ be this set of $k$ vertices on which the cops initially place. Note that for any vertex $x \in V(P)$, the distance between $x$ and the nearest cop is at most $\lceil \frac{n-3-k}{2k-1} \rceil$. Let $y$ be the initial position of the cop that is closest to vertex $3$ but not on vertex $3$. If the robber places to the left of $y$, then the robber is guaranteed to be caught in at most $\lceil \frac{n-3-k}{2k-1} \rceil$ rounds. Otherwise, the cop on vertex $3$ can push the robber towards the tail of $H_n$ by moving along the vertices $2,4,7,8,$ and so on. This also guarantees capture of the robber in at most $\lceil \frac{n-3-k}{2k-1} \rceil$ rounds.

For the lower bound, we argue that any initial cop placement other than $\mathcal{C}$ yields a capture time greater than or equal to that of $\mathcal{C}$.  Then, assuming the cops initially place on $\mathcal{C}$, we produce a strategy for the robber that avoids capture for at least $\lceil \frac{n-3-k}{2k-1} \rceil$ rounds. First, observe that from the perspective of the cops, it is always optimal for at most two cops to initially place in the set $\{1, 2, \ldots, 7\}$. This is because $\gamma(H_7) = 2$. Suppose two cops place in the set $\{1, 2, \ldots, 7\}$. Since vertex $7$ is the vertex in $\{1, 2, \ldots 7\}$ that is closest to the tail of $H_n$, it is optimal for the two cops in $H_7$ to place on vertices $7$ and $3$. In this case, if $n \geq 8$, the pigeonhole principle guarantees that there is a vertex $z \in \{8,9, \ldots n\}$ that is at least $\lceil \frac{(n-7)-(k-2)}{2k-3}\rceil \geq \lceil \frac{n-3-k}{2k-1} \rceil$ vertices away from the nearest cop. So the robber can stay on vertex $z$ and avoid capture for at least $\lceil \frac{n-3-k}{2k-1} \rceil$ rounds. Note that if $n = 7$, then $k =2$ and the robber is caught in one round anyway. 

Now, suppose no cops place in the set $\{1, 2, \ldots 7\}$ and thus, $n \geq 9$. Let $C$ be such an initial cop placement in which the rightmost cop is on vertex $x$ and the second rightmost cop is on vertex $y$. Let $C'$ be an initial cop placement obtained from $C$ by moving the rightmost cop to vertex $2$. We show that the capture time with $C’$ is at most the capture time with $C$ assuming optimal play from both players besides the initial position of the cops. We consider the three possible cases for where the robber could have an optimal initial placement. If the robber initially places to the left of $y$, then the modification from $C$ to $C’$ has no effect on the capture time. If the robber initially places to the right of $x$, then the capture time is greater in $C$ than in $C’$, since the robber cannot be captured until a cop reaches vertex $2$ or $3$. If the robber initially places between $x$ and $y$, then a modified robber strategy where the robber initially places to the right of $x$ will achieve a greater capture time for $C$, so the robber would not initially place between $x$ and $y$. Thus, it suffices to consider only initial cop placements where there is exactly one cop on the vertices $\left\{1,2,\dots,7\right\}$.

In the case that exactly one cop places in $\{1, 2, \ldots 7\}$, it is optimal for that cop to start on vertex $3$ so that they can push the robber towards the tail of $H_n$ as quickly as possible. Recall that this is accomplished when that cop moves along the path $P$. In this case, it is optimal for the remaining cops to place so that the capture time on $P$ is minimized. Let $S$ be the set of vertices in a longest subpath of $P$ that does not contain a cop. If $S$ does not contain vertex $2$, then the robber can avoid capture for $\lceil \frac{n-3-k}{2k-1} \rceil$ rounds by placing at the optimal vertex in $S$ and waiting to be captured. If $S$ does contain vertex $2$, then the robber can avoid capture for $\lceil \frac{n-3-k}{2k-1} \rceil$ rounds by starting at vertex $5$ and moving along vertices $7,8,9,$ and so on. In this case, the cop on $3$ must move along $P$ to force the robber into the tail of $H_n$. Note that the capture time is less if the robber starts anywhere else on $S$ because the cop on vertex $3$ can take a shortcut by moving to vertex $4$ in the first round. We can now conclude that $\capt_k(H_n) = \lceil \frac{n-3-k}{2k-1} \rceil$ if $2 \leq k < \gamma(H_n)$. Finally, observe that if $k = 1$, $\capt_k(H_n) = n - 4 = \lceil \frac{n-3-k}{2k-1} \rceil$.\end{proof}

\begin{thm}
For all integers $n \geq 7$, $\thc(H_n) \geq \lceil \sqrt{2n-7}\rceil.$
\end{thm}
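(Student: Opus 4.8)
The plan is to bound $k+\capt_k(H_n)$ from below by $\lceil\sqrt{2n-7}\rceil$ for \emph{every} admissible $k$, since $\thc(H_n)=\min_{1\le k\le n}\{k+\capt_k(H_n)\}$ and a uniform lower bound on each summand transfers to the minimum. Using the three-case formula for $\capt_k(H_n)$ from Lemma \ref{lem:kCaptValues}, I would split the argument according to these cases. The case $k=n$ gives $k+\capt_k(H_n)=n$, and $n\ge\lceil\sqrt{2n-7}\rceil$ is immediate from $(n-1)^2+6\ge0$. The case $\lceil\frac{n-8}{3}\rceil+2\le k<n$ gives $k+\capt_k(H_n)=k+1\ge\gamma(H_n)+1=\lceil\frac{n-8}{3}\rceil+3$; here I would use $\lceil\frac{n-8}{3}\rceil\ge\frac{n-8}{3}$ to reduce to checking $\frac{n+1}{3}\ge\sqrt{2n-7}$, which after squaring is exactly $(n-8)^2\ge0$. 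Since $\gamma(H_n)+1$ is an integer, this yields $\gamma(H_n)+1\ge\lceil\sqrt{2n-7}\rceil$.

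The crux is the main regime $1\le k<\lceil\frac{n-8}{3}\rceil+2$, where $\capt_k(H_n)=\lceil\frac{n-3-k}{2k-1}\rceil$. Dropping the ceiling only weakens the bound, so it suffices to show $k+\frac{n-3-k}{2k-1}\ge\sqrt{2n-7}$. The key idea, and the step I expect to be the main obstacle to spot, is the substitution $t=2k-1$ (so $k=\frac{t+1}{2}$), which collapses the expression into a symmetric form amenable to the AM--GM inequality. A short computation gives
\[
k+\frac{n-3-k}{2k-1}=\frac{1}{2}\left(t+\frac{2n-7}{t}\right),
\]
and AM--GM yields $t+\frac{2n-7}{t}\ge2\sqrt{2n-7}$, hence $k+\capt_k(H_n)\ge\sqrt{2n-7}$.

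Finally, since $k+\capt_k(H_n)$ is always an integer, each of the three cases in fact gives $k+\capt_k(H_n)\ge\lceil\sqrt{2n-7}\rceil$, and taking the minimum over $k$ establishes $\thc(H_n)\ge\lceil\sqrt{2n-7}\rceil$. I expect essentially no obstacle beyond discovering the substitution $t=2k-1$; once the expression is written in the form $\frac12\bigl(t+\frac{2n-7}{t}\bigr)$, both the AM--GM bound and the auxiliary inequalities $(n-8)^2\ge0$ and $(n-1)^2+6\ge0$ that dispatch the remaining cases are routine.
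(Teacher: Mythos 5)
Your proof is correct, and it follows the same skeleton as the paper's: both split according to the three regimes of Lemma \ref{lem:kCaptValues} and both reduce the main regime to minimizing the continuous relaxation $k+\frac{n-3-k}{2k-1}$. The one substantive difference is how that minimum is found. The paper differentiates $f(k)=k+\frac{n-3-k}{2k-1}$, locates the critical point $k=\frac{1+\sqrt{2n-7}}{2}$, invokes convexity, and evaluates $f$ there; your substitution $t=2k-1$ rewrites the expression as $\frac{1}{2}\bigl(t+\frac{2n-7}{t}\bigr)$ and dispatches it with AM--GM (valid since $t>0$ and $2n-7>0$ for $n\ge 7$), which is more elementary and makes the value $\sqrt{2n-7}$ transparent. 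Your handling of the middle regime is also cleaner: the paper compares $\lceil\sqrt{2n-7}\rceil$ with $\lceil\frac{n+1}{3}\rceil$ via a case analysis on $n \bmod 3$, whereas you simply note $\lceil\frac{n-8}{3}\rceil+3\ge\frac{n+1}{3}\ge\sqrt{2n-7}$ (i.e., $(n-8)^2\ge 0$) and use integrality of the left-hand side to pass to the ceiling. What the paper's calculus route buys is an explicit identification of the (near-)optimal number of cops, which is reused later when the authors choose $k=\lceil\frac{1+\sqrt{2n-11}}{2}\rceil$ to bound $\thd(H_n)$; your route buys brevity and avoids any appeal to derivatives.
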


\begin{proof}
We wish to minimize $k+\capt_k(H_n)$ over $1\leq k\leq n$. By Lemma \ref{lem:kCaptValues}, $k+\capt_k(H_n)=k+1$ for any $k$ satisfying $\lceil \frac{n-8}{3} \rceil + 2 \leq k < n$; thus, for such values of $k$, we have that $k+\capt_k(H_n)\geq \lceil \frac{n-8}{3} \rceil + 3$. Further, when $k=n$, we have that $k+\capt_k(H_n)=n$.

For $1 \leq k < \lceil \frac{n-8}{3} \rceil + 2 = \gamma(H_n)$, we know $\capt_k(H_n)=\lceil \frac{n-3-k}{2k-1} \rceil$ by Lemma \ref{lem:kCaptValues}. Then,
\[
\min_{1\leq k<\gamma(H_n)}\left\{ k+ \left\lceil \frac{n-3-k}{2k-1} \right\rceil \right\} \geq \min_{1\leq k<\gamma(H_n)}\left\{ k+ \frac{n-3-k}{2k-1} \right\}.
\]
We now determine the minimum value of $f(k):=k+ \frac{n-3-k}{2k-1}$. Note that
\begin{eqnarray*}
    f'(k) 
     &=& \frac{(2k-1)^2-2n+7}{(2k-1)^2}
\end{eqnarray*}
and so $f'(k)=0$ if $k=\frac{1\pm \sqrt{2n-7}}{2}$. Since $k\geq1$, we obtain $k = \frac{1+ \sqrt{2n-7}}{2}$ as our critical point. For $k> \frac{1}{2}$, the function $f(k)$ is concave-up, so $f(\frac{1+ \sqrt{2n-7}}{2})$ is the minimum. Note that
\begin{eqnarray*}
    f\left( \frac{1 + \sqrt{2n-7}}{2} \right) &=& \frac{1 + \sqrt{2n-7}}{2} + \frac{n-3-\frac{1 + \sqrt{2n-7}}{2} }{2(\frac{1 + \sqrt{2n-7}}{2})-1} \\
     &=& \frac{1 + \sqrt{2n-7}}{2} + \frac{n-3-\frac{1 + \sqrt{2n-7}}{2} }{\sqrt{2n-7}} \\
    &=& \frac{\sqrt{2n-7}+2n-7+2n-7-\sqrt{2n-7}}{2\sqrt{2n-7}} \\
    &=& \sqrt{2n-7}.
\end{eqnarray*}
Since $\thc(H_n)$ is an integer,
\[
\min_{1\leq k<\gamma(H_n)}\left\{ k+ \left\lceil \frac{n-3-k}{2k-1} \right\rceil \right\} \geq \lceil \sqrt{2n-7} \rceil.
\]

Considering all possible values of $k$, we now have that
\[
\thc(H_n) \geq \min\left\{ \lceil \sqrt{2n-7} \rceil, \left\lceil \frac{n-8}{3} \right\rceil+3, n \right\}.
\]
First, observe that $\left\lceil \frac{n-8}{3} \right\rceil+3\leq \frac{n-8}{3} +4\leq n$ for all $n\geq 2$. So it remains to show that $\lceil \sqrt{2n-7} \rceil\leq \left\lceil \frac{n-8}{3} \right\rceil+3$ for all $n\geq 7$. To do this, we note $\left\lceil \frac{n-8}{3} \right\rceil+3=\left\lceil \frac{n+1}{3} \right\rceil$. Every $n\geq 7$ can be written as $n=3a-1$, $n=3a-2$, or $n=3a-3$ for some integer $a\geq 3$. In each of these cases, $\left\lceil \frac{n+1}{3} \right\rceil=a$.

If $n=3a-1$, $\lceil \sqrt{2n-7} \rceil=\lceil \sqrt{6a-9} \rceil$; if $n=3a-2$, $\lceil \sqrt{2n-7} \rceil=\lceil \sqrt{6a-11} \rceil$; and if $n=3a-3$, $\lceil \sqrt{2n-7} \rceil=\lceil \sqrt{6a-13} \rceil$. Since $\lceil \sqrt{6a-13} \rceil\leq \lceil \sqrt{6a-11} \rceil\leq \lceil \sqrt{6a-9} \rceil$, we only need to consider when $\lceil \sqrt{6a-9} \rceil\leq a$. Because $a$ is an integer, $\lceil \sqrt{6a-9} \rceil\leq a$ if and only if $6a-9\leq a^2$. Since this holds for $a\geq 3$, then $\lceil \sqrt{2n-7} \rceil\leq a= \left\lceil \frac{n-8}{3} \right\rceil+3$ for all $n\geq 7$.






Thus, for all $n\geq 7$,
\[
\thc(H_n) \geq \min\left\{ \lceil \sqrt{2n-7} \rceil, \left\lceil \frac{n-8}{3} \right\rceil+3, n \right\}= \lceil \sqrt{2n-7} \rceil.
\qedhere\] 
\end{proof}

Now, we consider $\dmg_k(H_n)$ and $\thd(H_n)$.

\begin{lem}\label{lem:kDMGUpperBound}
If $n$ and $k$ are integers with $n>10$ and $1 \leq k \leq n$, then \[\dmg_k(H_n) \leq \begin{cases}  \lfloor\frac{n-3}{2}\rfloor -1 & \text{if } k=1;\\ \lceil \frac{n-5-k}{2k-1} \rceil - 1 & \text{if }2 \leq k < \lceil \frac{n-4}{3}\rceil;\\ 
1 & \text{if } \lceil \frac{n-4}{3}\rceil\leq k< \lceil\frac{n-8}{3}\rceil + 2;\\
0 & \text{if }\lceil\frac{n-8}{3}\rceil + 2\leq k\leq n. \end{cases}\]
\end{lem}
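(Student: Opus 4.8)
The plan is to establish each of the four cases of the upper bound by exhibiting an explicit cop strategy (using the appropriate number of cops) that restricts the robber to damaging no more than the claimed number of vertices. Since $\dmg_k(H_n)$ is an upper bound on damage under any cop strategy, it suffices to demonstrate that the cops have a strategy limiting the robber's damage; I do not need to argue optimality from the robber's side. The cases for large $k$ are the easiest: when $\lceil\frac{n-8}{3}\rceil+2\leq k\leq n$ we have $k\geq\gamma(H_n)$ by Proposition~\ref{prop:domHn}, so the cops can occupy a dominating set and capture the robber immediately, giving $\dmg_k(H_n)=0$. For $\lceil\frac{n-4}{3}\rceil\leq k<\lceil\frac{n-8}{3}\rceil+2$, I would exhibit a placement of $k$ cops that dominates all but a small controllable region, so that the robber can only ever damage a single vertex, yielding the bound of $1$.

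For the two remaining cases, the key idea is to exploit the structure of $H_n$ identified in the capture-time analysis of Lemma~\ref{lem:kCaptValues}: the graph essentially reduces to a path $P$ with $V(P)=\{3,2,4,7,8,\ldots,n\}$, together with the small head $\{1,2,\ldots,7\}$. I would place one cop on vertex $3$ (to push the robber rightward along the tail, as in Lemma~\ref{lem:kCaptValues}) and distribute the remaining $k-1$ cops along $P$ to control capture as efficiently as possible. The crucial observation, analogous to Lemma~\ref{dmgCaptIneqLemma}, is that the number of vertices damaged is one fewer than the number of rounds the robber survives along a ``trapped'' sub-path: if the robber is squeezed between a pushing cop and a stationary cop on a subpath of length roughly $\frac{n-5-k}{2k-1}$, then it can only advance and damage one vertex per round until caught, so the damage is about $\lceil\frac{n-5-k}{2k-1}\rceil-1$. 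The $k=1$ case is a direct specialization: a single cop on vertex $3$ pushing along $P$ confines the robber to the tail, and a careful count of how many tail vertices the robber can visit before capture gives $\lfloor\frac{n-3}{2}\rfloor-1$.

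The main obstacle I expect is pinning down the exact constants in the two nontrivial cases, namely verifying that the numerator is $n-5-k$ (rather than the $n-3-k$ appearing in the capture-time formula) and that the ``$-1$'' correction is correct in all residue classes of $n$ modulo $2k-1$. The shift from $n-3-k$ to $n-5-k$ should arise because, unlike in capture time, the robber does not benefit from starting at the far optimal vertex and waiting: to \emph{damage} new vertices the robber must keep moving, and the cop placement that is optimal for damage-minimization differs slightly from the one optimal for capture-time. I would handle this by re-examining the robber's damage-maximizing walk on the trapped subpath and tracking precisely which endpoint vertices are already dominated (and hence cannot be damaged) at the moment the robber is cornered. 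Boundary cases where the tail is short or where the floor/ceiling interact with the hypothesis $n>10$ will require separate direct verification, but these are routine once the main squeezing argument is in place.
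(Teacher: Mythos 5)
Your overall skeleton (exhibit an explicit cop strategy in each of the four regimes; cite domination for the last case) matches the paper, and the two large-$k$ cases are fine. But the heart of the lemma is the case $2 \leq k < \lceil\frac{n-4}{3}\rceil$, and there your proposed strategy is the wrong one. You import the \emph{pushing} strategy from the capture-time analysis: a cop on vertex $3$ that drives the robber rightward along $P$. Pushing is good for capture time but bad for damage, because every vertex the robber is driven through becomes damaged. This is already visible at $k=1$: the push along $3,2,4,7,8,\ldots,n$ takes $n-4$ rounds and lets the robber damage on the order of $n-5$ vertices, far exceeding the claimed $\lfloor\frac{n-3}{2}\rfloor-1 \approx n/2$ (the paper does not reprove $k=1$ at all; it simply cites Cox and Sanaei). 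The paper's strategy for $2 \leq k < \lceil\frac{n-4}{3}\rceil$ is instead entirely \emph{stationary}: one cop sits on vertex $4$, whose closed neighborhood is $\{2,3,4,5,6\}$, and the remaining $k-1$ cops sit at the centers of subpaths of length $2\lceil\frac{n-5-k}{2k-1}\rceil+1$ partitioning the tail from its far end. A robber placing on the tail is then within distance $r=\lceil\frac{n-5-k}{2k-1}\rceil$ of a cop and is captured in at most $r$ rounds by the nearest cops simply converging on it, so it damages at most $r-1$ vertices; the only alternative is to place on vertex $1$, where the motionless cop on vertex $4$ pins it to a single damaged vertex. This dichotomy (tail versus vertex $1$) is central to the paper's argument and is absent from your sketch.

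This also resolves the constant you flagged as the ``main obstacle'': the numerator $n-5-k$ has nothing to do with the robber needing to keep moving. It comes from the covering count $n-(k-1)(2r+1) \leq 6 + r$, i.e., after the $k-1$ tail cops cover their subpaths, the roughly six head vertices plus any leftover tail must lie within radius $r$ of the cop on vertex $4$; solving $r(2k-1) \geq n-5-k$ gives the formula. Without identifying the stationary placement anchored at vertex $4$, you cannot recover this constant, and the squeezing argument you describe would need the pushing cop to traverse (and let the robber damage) vertices between its start at vertex $3$ and the robber's subpath. To repair the proposal, replace the pushing cop with the stationary cop on vertex $4$, add the explicit case split on whether the robber places on the tail or on vertex $1$, and for $k=1$ either cite the known value of $\dmg_1(H_n)$ or devise a genuinely different single-cop strategy, since pushing does not achieve it.
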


\begin{proof}
Recall that by \cite{CS19}, $\dmg_1(H_n) = \lfloor\frac{n-3}{2}\rfloor -1$. Furthermore, note that $\dmg_k(H_n) = 0$ when $\gamma(H_n) = \lceil\frac{n-8}{3}\rceil + 2\leq k\leq n.$ For $2 \leq k < \lceil \frac{n-4}{3} \rceil$, we provide a cop strategy and argue that given this strategy, the robber can visit a limited number of unique vertices. In this case, place one cop on vertex $4$. Additionally, place the remaining $k-1$ cops on the tail as follows: starting at the end of the tail, divide the path into subpaths of length $2\lceil \frac{n-5-k}{2k-1} \rceil +1$ and place a cop at the center of each subpath. Thus, in each of these subpaths every vertex is distance at most $\lceil \frac{n-5-k}{2k-1} \rceil$ from a cop. Since
\[n-(k-1)\left(2\left \lceil \frac{n-5-k}{2k-1} \right \rceil +1\right)\leq 6+\left\lceil\frac{n-5-k}{2k-1}\right\rceil,\]
any remaining tail vertices not in a subpath are within $\lceil \frac{n-5-k}{2k-1} \rceil$ of the cop on vertex $4$. So if the robber places on any tail vertex, the two closest cops will move towards the robber and capture will occur in at most $\lceil \frac{n-5-k}{2k-1} \rceil$ steps. In this case, damage is at most $\lceil \frac{n-5-k}{2k-1} \rceil -1$. 

Otherwise, the robber must place on vertex $1$ since vertex $4$ dominates vertices $2,3,5$, and $6$. By remaining still, the cop on vertex $4$ limits the robber to damaging only vertex $1$. So in this case, the robber can always choose to place on the tail to maximize damage unless the tail is dominated by cops. The tail is dominated if $k-1\geq \lceil \frac{n-7}{3}\rceil$, i.e.,  $k\geq \lceil \frac{n-4}{3}\rceil$. Since we assumed $2 \leq k<\lceil \frac{n-4}{3}\rceil$, we have $\dmg_k(H_n)\leq \lceil \frac{n-5-k}{2k-1} \rceil - 1$, as desired. 

Finally, if  $\lceil \frac{n-4}{3}\rceil\leq k< \gamma(H_n)$, using the strategy above, the cops dominate the tail of $H_n$. This means that in order to damage a vertex, the robber must place on vertex $1$. Again, the cop on vertex $4$ can prevent any further damage. Thus, $\dmg_k(H_n) =1$ in this case.
\end{proof}


\begin{thm}
For each integer $a \geq 3$, if $n = 2a^2 - 2a + 6$, then $\thd(H_n) \leq \sqrt{2n - 11} - 1$.
\end{thm}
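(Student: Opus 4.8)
The plan is to evaluate the upper bound from Lemma \ref{lem:kDMGUpperBound} at a single carefully chosen value of $k$ and to exploit the special form $n = 2a^2 - 2a + 6$ to make every ceiling in that bound exact. Since $\thd(H_n) = \min_{1 \le k \le n}\{k + \dmg_k(H_n)\}$ by Definition \ref{defn:dmgthrot}, it suffices to exhibit one integer $k$ for which $k + \dmg_k(H_n) \le \sqrt{2n-11} - 1$. I would take $k = a$. The heuristic behind this choice is that, in the range $2 \le k < \lceil \frac{n-4}{3}\rceil$, Lemma \ref{lem:kDMGUpperBound} gives $k + \dmg_k(H_n) \le k + \lceil \frac{n-5-k}{2k-1}\rceil - 1$, whose continuous relaxation $k + \frac{n-5-k}{2k-1}$ is exactly the function $f$ from the $\thc(H_n)$ computation with $n$ replaced by $n-2$. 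Its minimizer is therefore $\frac{1+\sqrt{2(n-2)-7}}{2} = \frac{1+\sqrt{2n-11}}{2}$, and the point of the hypothesis $n = 2a^2-2a+6$ is that $2n-11 = 4a^2 - 4a + 1 = (2a-1)^2$ is a perfect square, so this minimizer equals the integer $a$.

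Next I would verify the side conditions needed to apply the correct branch of Lemma \ref{lem:kDMGUpperBound}. First, $n = 2a^2 - 2a + 6 > 10$ for every $a \ge 3$, so the lemma is in force. Second, I need $2 \le a < \lceil \frac{n-4}{3}\rceil$: the inequality $a \ge 2$ is immediate, and $a < \lceil \frac{n-4}{3}\rceil$ follows from $n - 4 = 2a^2 - 2a + 2 > 3a$, which rearranges to $(2a-1)(a-2) > 0$, valid for $a \ge 3$. Thus $k = a$ lands in the case where $\dmg_a(H_n) \le \lceil \frac{n-5-a}{2a-1}\rceil - 1$.

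The decisive computation is then purely algebraic: $n - 5 - a = 2a^2 - 3a + 1 = (2a-1)(a-1)$, so $\frac{n-5-a}{2a-1} = a-1$ is itself an integer and the ceiling is exact, giving $\dmg_a(H_n) \le (a-1) - 1 = a - 2$. Combining, $\thd(H_n) \le a + \dmg_a(H_n) \le a + (a-2) = 2a - 2$, while $\sqrt{2n-11} - 1 = \sqrt{(2a-1)^2} - 1 = (2a-1) - 1 = 2a - 2$, which yields exactly the claimed bound $\thd(H_n) \le \sqrt{2n-11} - 1$.

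I do not expect a serious obstacle here: the substance of the argument is recognizing that the prescribed form of $n$ simultaneously turns the continuous minimizer into an integer and turns the two fractions $\frac{2n-11}{\;}$ and $\frac{n-5-a}{2a-1}$ into perfect squares and integers respectively. The only genuine care required is checking that $k = a$ falls in the intended middle branch of the piecewise bound (the verification $(2a-1)(a-2) > 0$ above) and confirming $n > 10$, so that Lemma \ref{lem:kDMGUpperBound} may legitimately be invoked; everything else is a short direct calculation.
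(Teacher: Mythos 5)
Your proposal is correct and follows essentially the same route as the paper: both apply the middle branch of Lemma \ref{lem:kDMGUpperBound} at $k=a$ (the paper writes this as $k=\lceil\frac{1+\sqrt{2n-11}}{2}\rceil$ and then shows it equals $a$), verify the side condition $2\le a<\lceil\frac{n-4}{3}\rceil$, and use the factorization $n-5-a=(2a-1)(a-1)$ to get $\thd(H_n)\le 2a-2=\sqrt{2n-11}-1$. If anything, your verification of the side condition via $(2a-1)(a-2)>0$ is slightly cleaner than the paper's.
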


\begin{proof}
Let $k= \lceil \frac{1+ \sqrt{2n-11}}{2} \rceil$. Note that since $n=2a^2 -2a+6$,
\begin{eqnarray*}
k&=& \left\lceil \frac{1+ \sqrt{2(2a^2 -2a+6)-11}}{2} \right\rceil\\
&=& \left\lceil \frac{1+ \sqrt{(2a-1)^2}}{2} \right\rceil\\
&=& a.
\end{eqnarray*}

By Lemma \ref{lem:kDMGUpperBound}, $\thd(H_n) \leq k+\lceil \frac{n-5-k}{2k-1} \rceil -1$ for $2\leq k< \lceil \frac{n-4}{3} \rceil$. In order to verify that $2\leq \lceil \frac{1+ \sqrt{2n-11}}{2} \rceil< \lceil \frac{n-4}{3}\rceil$, observe that $a\leq \frac{2a^2-2a+6}{3}\leq \lceil \frac{n-4}{3}\rceil$ for all $a\geq 3$. Thus,

\[\thd(H_n) \leq a + \left\lceil \frac{n-5-a} {2a-1}\right\rceil - 1.\]

Since $n=2a^2-2a+6$,
\begin{eqnarray*}
\thd(H_n)&\leq & a + \left\lceil \frac{2a^2-2a+6-5-a} {2a-1}\right\rceil-1\\
&=& a + \left\lceil \frac{(2a-1)(a-1)} {2a-1}\right\rceil - 1\\
&=& 2a-2.
\end{eqnarray*}

Finally, note that $\sqrt{2n-11}= \sqrt{(2a-1)^2} = 2a-1$ and so,
\[\thd(H_n) \leq \sqrt{2n-11} -1 . \qedhere\]


\end{proof}

\begin{lem}
If $a\geq 3$ is an integer and $n = 2a^2 - 2a + 6$, then \[\left \lceil \sqrt{2n-7} \right \rceil - (\sqrt{2n-11} - 1) \geq 2.\]
\end{lem}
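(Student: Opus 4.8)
The plan is to reduce everything to a clean perfect-square computation by exploiting the special form $n = 2a^2 - 2a + 6$, and then to handle the ceiling with a single strict-inequality observation. First I would substitute $n = 2a^2 - 2a + 6$ into both radicands. This gives $2n - 11 = 4a^2 - 4a + 1 = (2a-1)^2$ and $2n - 7 = 4a^2 - 4a + 5 = (2a-1)^2 + 4$. Since $a \geq 3$ we have $2a - 1 > 0$, so the first expression simplifies exactly: $\sqrt{2n-11} = 2a - 1$, and hence $\sqrt{2n-11} - 1 = 2a - 2$.

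Next I would bound the ceiling term from below. Because $(2a-1)^2 + 4 > (2a-1)^2$, taking square roots yields the strict inequality $\sqrt{2n-7} = \sqrt{(2a-1)^2 + 4} > 2a - 1$. The key point is that $2a-1$ is an integer, so a quantity strictly larger than $2a-1$ cannot have $2a-1$ as a ceiling; thus $\lceil \sqrt{2n-7} \rceil \geq 2a$. Combining the two estimates gives
\[
\lceil \sqrt{2n-7} \rceil - \left(\sqrt{2n-11} - 1\right) \geq 2a - (2a - 2) = 2,
\]
which is exactly the claim.

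I expect there to be essentially no real obstacle here, since once the radicands are rewritten as $(2a-1)^2$ and $(2a-1)^2 + 4$ the result is immediate. The only point requiring a moment of care is the ceiling step: one must note that the inequality $\sqrt{2n-7} > 2a-1$ is \emph{strict} (guaranteed by the additive $+4$), and that $2a-1 \in \ZZ$, so that the smallest integer at least $\sqrt{2n-7}$ is forced up to $2a$ rather than staying at $2a-1$. Everything else is direct substitution and does not depend on any earlier result in the paper beyond the defining relation $n = 2a^2 - 2a + 6$.
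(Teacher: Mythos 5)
Your proof is correct and follows essentially the same route as the paper: both substitute $n = 2a^2 - 2a + 6$ to get $2n-11 = (2a-1)^2$ and $2n-7 = (2a-1)^2 + 4$, and both use the strict inequality $\sqrt{2n-7} > 2a-1$ together with the integrality of $2a-1$ to force $\lceil\sqrt{2n-7}\rceil \geq 2a$. No issues.
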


\begin{proof}
Note that $4a^2 -4a +5 > (2a -1)^2 = 4a^2 -4a +1 \geq 0$.
Then, we have 
\begin{align*}
\sqrt{4a^2 -4a +5} > (2a -1) &\Rightarrow \sqrt{4a^2 -4a +5} - (2a - 1) > 0\\
&\Rightarrow \left \lceil \sqrt{4a^2 -4a +5} - (2a -1) \right \rceil \geq 1\\
&\Rightarrow \left \lceil \sqrt{4a^2 -4a +5} \right \rceil -(2a-1) \geq 1\\
&\Rightarrow \left \lceil \sqrt{2n-7} \right \rceil - (\sqrt{2n-11} -1) \geq 2. \qedhere
\end{align*}
\end{proof}

\begin{cor}
For every integer $a \geq 3$, if $n = 2a^2 - 2a + 6$, then $\thd(H_n) < \thc(H_n) - 1$.
\end{cor}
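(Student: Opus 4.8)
The plan is to chain together the three immediately preceding results, since all of the substantive work has already been carried out. Recall that we have a lower bound $\thc(H_n) \geq \lceil \sqrt{2n-7} \rceil$ valid for every $n \geq 7$, an upper bound $\thd(H_n) \leq \sqrt{2n-11} - 1$ valid whenever $n = 2a^2 - 2a + 6$ with $a \geq 3$, and the gap lemma asserting that $\lceil \sqrt{2n-7} \rceil - (\sqrt{2n-11} - 1) \geq 2$ for precisely these values of $n$.

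First I would fix an integer $a \geq 3$ and set $n = 2a^2 - 2a + 6$, so that all three results apply simultaneously to the same graph $H_n$. Subtracting the upper bound on $\thd(H_n)$ from the lower bound on $\thc(H_n)$ and then applying the gap lemma gives
\[
\thc(H_n) - \thd(H_n) \geq \left\lceil \sqrt{2n-7} \right\rceil - \left( \sqrt{2n-11} - 1 \right) \geq 2.
\]

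Finally I would invoke integrality: both $\thc(H_n)$ and $\thd(H_n)$ are integer-valued, so the chain $\thc(H_n) - \thd(H_n) \geq 2$ is equivalent to $\thd(H_n) \leq \thc(H_n) - 2$, which in particular yields the claimed strict inequality $\thd(H_n) < \thc(H_n) - 1$.

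There is no genuine obstacle in this step; the only point requiring care is that the two bounds being combined must hold for the same values of $n$. This is exactly why restricting to the family $n = 2a^2 - 2a + 6$ is essential: these are the orders for which the clean closed-form upper bound on $\thd(H_n)$ was derived, and they are simultaneously the orders tailored to make the gap lemma's estimate evaluate to at least $2$. Outside this family the two asymptotic bounds still differ by roughly a ceiling-versus-floor discrepancy, but one would lose the guarantee that the difference reaches $2$ exactly, so singling out $n = 2a^2 - 2a + 6$ is what makes the argument go through cleanly.
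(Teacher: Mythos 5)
Your proposal is correct and is exactly the argument the paper intends: the corollary follows by chaining the lower bound $\thc(H_n) \geq \lceil \sqrt{2n-7}\,\rceil$, the upper bound $\thd(H_n) \leq \sqrt{2n-11} - 1$, and the gap lemma, giving $\thc(H_n) - \thd(H_n) \geq 2$. (One tiny remark: the final step needs no appeal to integrality, since $\thc(H_n) - \thd(H_n) \geq 2$ rearranges directly to $\thd(H_n) \leq \thc(H_n) - 2 < \thc(H_n) - 1$.)
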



\end{subsection}



Hence, we have obtained an infinite family of graphs $G$ for which $\thd(G) \leq \thc(G) -2$.  

\end{section}
\begin{section}{Further investigations of damage numbers}\label{sec:damageNumbers}

In this section, we further explore the $k$-damage number and its implications for damage throttling. First, we discuss a necessary condition for $c(G)=k+\dmg_k(G)$  and characterize graphs for which $c(G)=1+\dmg_1(G)$. Then we provide an upper bound for $\dmg_k(G)$ in terms of maximum degree, which is a generalization of a bound for $\dmg_1(G)$ in $\cite{CS19}$.

In the proof of Proposition \ref{prop:c(G)leqthd(G)}, we showed $c(G)\leq k+\dmg_k(G)$ for each $1\leq k\leq n$. Proposition \ref{prop:copGammaThrot} considers the case where $c(G)=\thd(G)$ and shows that for such graphs, either $c(G)=\gamma(G)$ or $\thd(G)<\thc(G)-1$. It is easy to observe that $c(G)=\thd(G)$ if and only if there exists an integer $\ell$ such that $c(G)=\ell+\dmg_{\ell}(G)$. So characterizing instances where $c(G)= k+\dmg_k(G)$ for some $k$ is useful, as it produces a class of graphs which are good candidates for achieving a gap of two or greater between $\thc(G)$ and $\thd(G)$. First, we prove a helpful lemma.

\begin{lem}\label{c(G)leq k+dmg_k(G)-1}
Suppose $G$ is a connected graph. For each integer $1 \leq k \leq n$, if $\dmg_k(G) \geq 2$, then $c(G) \leq k +\dmg_k(G) -1$.
\end{lem}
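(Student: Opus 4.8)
The plan is to refine the capture strategy from the proof of Proposition~\ref{prop:c(G)leqthd(G)} so that it succeeds with one fewer cop whenever the robber is forced to damage at least two vertices. Write $d = \dmg_k(G)$, and deploy $k$ \emph{active} cops playing an optimal damage-minimizing strategy. Against this strategy the robber occupies at most $d$ distinct vertices over the entire game, so if it is never captured by the active cops, the total damaged set $D$ satisfies $|D| \le d$, the robber is confined to $D$ for the whole game, and $G[D]$ is connected (the robber's trajectory is a walk visiting every vertex of $D$). The remaining task is therefore to capture a robber that is permanently confined to a connected set of size $m := |D| \le d$ using only the $d - 1 \ge m - 1$ \emph{undercover} cops. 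So it suffices to establish the self-contained claim: \emph{if a robber is forced to remain within a set $R$ of a connected graph (captured the instant it leaves $R$) with $G[R]$ connected and $|R| = m \ge 2$, then $m - 1$ cops starting from arbitrary positions can capture it in finitely many rounds.}

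I would prove this claim by induction on $m$. For the base case $m = 2$, the region is a single edge $\{x,y\}$: a lone cop walks toward $x$ along a shortest path in $G$; if it ever lands on the robber's vertex en route it captures immediately, and otherwise it arrives at $x$ with the robber pinned on $y$, and in the next round steps from $x$ to $y$ to capture (the robber cannot flee the edge, since leaving $R$ is fatal). For the inductive step, choose a non-cut vertex $a$ of $G[R]$ (one exists, e.g.\ an endpoint of a longest path). Send one undercover cop along a shortest path to $a$; it either captures the robber sitting on $a$ or freezes there. Once frozen, the robber can neither enter $a$ nor leave $R$, so it is confined to $R \setminus \{a\}$, which remains connected because $a$ was non-cut and has size $m - 1$. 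Applying the induction hypothesis with the remaining $m - 2$ cops completes the capture. Since $m \le d$, the $d - 1$ undercover cops are enough to run this procedure, giving $c(G) \le k + d - 1$.

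The main obstacle is not the cornering induction itself but the interaction between it and the fact that the confinement set $D$ is revealed only as the robber plays, so a cop could freeze on a vertex before the robber's reachable region has stabilized. I would resolve this by observing that freezing a cop only ever restricts the robber's moves and hence never increases total damage, so the active cops still guarantee at most $d$ damaged vertices; consequently the robber can enlarge its reachable region at most $d$ times, after which $D$ is fixed and the inductive freezing drives the robber into strictly smaller connected regions until the base case applies. Carefully bookkeeping these finitely many stabilization events is the delicate part of the argument, whereas the base case and the non-cut-vertex reduction are routine.
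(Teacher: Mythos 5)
Your proof is correct and follows the paper's overall template --- reuse the capture strategy from Proposition~\ref{prop:c(G)leqthd(G)} with only $\dmg_k(G)-1$ undercover cops --- but the endgame inside the damaged region is executed differently. The paper has the $\dmg_k(G)-1$ undercover cops occupy all but one vertex of the (at most $\dmg_k(G)$-vertex, connected) damaged set simultaneously: the robber is then pinned on the single unoccupied vertex, every safe neighbor of which is cop-occupied, and a cop adjacent to it within the damaged set steps onto it. Your inductive cornering via non-cut vertices accomplishes the same confinement one vertex at a time; it uses the same number of cops and is more machinery than strictly necessary, but it is sound --- the base case, the existence of a non-cut vertex in a connected graph on at least two vertices, and the count $1+(m-2)=m-1$ all check out. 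Where your write-up genuinely adds value is in confronting the fact that the damaged set is revealed online: the paper's phrase ``once these cops are deployed'' quietly assumes the region has stabilized, whereas you observe that adding undercover cops cannot increase damage, so the region can grow only finitely many times and restarting the cornering procedure after each growth event still terminates in capture. The only (trivial) omission is the case where the realized damaged set has fewer than two vertices, which a single undercover cop handles directly.
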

\bpf
By Proposition \ref{prop:c(G)leqthd(G)}, we know that $c(G) \leq \thd(G)$. Apply the same cop strategy used in Proposition \ref{prop:c(G)leqthd(G)}, but with $\dmg_k(G)-1$ undercover cops.  As $G$ is a connected graph, the $\dmg_k(G)-1$ undercover cops can catch the robber because once these cops are deployed, every vertex in the damaged area will either be occupied by a cop, or adjacent to only vertices containing cops. 
\epf


Applying Lemma \ref{c(G)leq k+dmg_k(G)-1}, we see the following necessary condition for $c(G)=k+\dmg_k(G)$.

\begin{obs}
For any connected graph $G$ and integer $1\leq k\leq n$, if $c(G)=k+\dmg_k(G)$, then $\dmg_{k}(G)\leq 1$. Furthermore, either $\dmg_{c(G)}(G)=0$ or $\dmg_{c(G)-1}(G)=1$.
\end{obs}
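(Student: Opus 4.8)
The plan is to derive both conclusions directly from Lemma~\ref{c(G)leq k+dmg_k(G)-1} together with the fundamental inequality $c(G) \leq k + \dmg_k(G)$ established in Proposition~\ref{prop:c(G)leqthd(G)}. For the first statement, I argue by contraposition: suppose $\dmg_k(G) \geq 2$. Then Lemma~\ref{c(G)leq k+dmg_k(G)-1} gives $c(G) \leq k + \dmg_k(G) - 1 < k + \dmg_k(G)$, so the hypothesized equality $c(G) = k + \dmg_k(G)$ fails. Hence whenever $c(G) = k + \dmg_k(G)$ holds, we must have $\dmg_k(G) \leq 1$.

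For the second statement, I specialize the first conclusion to the value $k = c(G)$, which is a legitimate choice since $1 \leq c(G) \leq n$ for any connected graph. If $c(G) = c(G) + \dmg_{c(G)}(G)$, this forces $\dmg_{c(G)}(G) = 0$ directly, and we are in the first case. Otherwise, I want to show $\dmg_{c(G)-1}(G) = 1$. The natural route is to observe that with $c(G)$ cops the robber is captured, so no damage is strictly forced, but with one fewer cop the structure of the cop-strategy argument in Proposition~\ref{prop:c(G)leqthd(G)} suggests at most a small amount of unavoidable damage. The key leverage is that the ``either/or'' is not claiming both simultaneously; I expect the honest statement to be that at least one of the two holds, reflecting the boundary behavior as the number of cops crosses the cop number threshold.

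The main obstacle I anticipate is pinning down precisely why the second alternative $\dmg_{c(G)-1}(G) = 1$ holds when $\dmg_{c(G)}(G) \neq 0$. Here I would use the monotonicity-type reasoning: $\dmg_k(G)$ is non-increasing in $k$ (more cops never increase achievable damage, since extra cops can mimic existing ones), so $\dmg_{c(G)-1}(G) \geq \dmg_{c(G)}(G)$. Combined with the first conclusion applied at $k = c(G)$ and at $k = c(G)-1$, and the observation that $\dmg_{c(G)}(G) \leq 1$ already, the case $\dmg_{c(G)}(G) = 1$ would need to be reconciled with the requirement that $c(G)$ cops capture and thus force zero net damage at the capture threshold. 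The cleanest argument is likely that $c(G)$ cops always achieve $\dmg_{c(G)}(G) = 0$ since a dominating cop-strategy realizing capture leaves no forced damage, which immediately lands us in the first case unless the capture requires exactly $c(G)$ cops in a way that permits one damaged vertex with $c(G)-1$ cops.

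I would carry out the steps in this order: first invoke Lemma~\ref{c(G)leq k+dmg_k(G)-1} contrapositively to get $\dmg_k(G) \leq 1$; then set $k = c(G)$ and split on whether $\dmg_{c(G)}(G) = 0$; finally, in the remaining case, use the relationship between capture with $c(G)$ cops and the damage achievable with $c(G)-1$ cops to conclude $\dmg_{c(G)-1}(G) = 1$, appealing to the bound $\dmg_k(G) \leq 1$ at $k = c(G)-1$ to rule out larger values. The subtlety worth stating carefully is that this is an observation packaging two previously-established inequalities rather than a new combinatorial game argument, so the proof should remain short.
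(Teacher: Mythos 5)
Your first step is correct and is precisely the paper's intended argument: taking the contrapositive of Lemma~\ref{c(G)leq k+dmg_k(G)-1}, the hypothesis $c(G)=k+\dmg_k(G)$ forces $\dmg_k(G)\leq 1$.

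The second half of your proposal contains a genuine gap. The ``furthermore'' clause is still governed by the hypothesis $c(G)=k+\dmg_k(G)$ for the \emph{given} $k$, and its entire content is a one-line substitution: since $\dmg_k(G)\in\{0,1\}$, either $\dmg_k(G)=0$, in which case the assumed equality reads $c(G)=k$ and hence $\dmg_{c(G)}(G)=0$, or $\dmg_k(G)=1$, in which case $k=c(G)-1$ and hence $\dmg_{c(G)-1}(G)=1$. You instead specialize to $k=c(G)$, which the hypothesis does not license, and then try to close the argument with the claim that $c(G)$ cops capturing the robber ``leaves no forced damage.'' That claim is false: capture may take many rounds, during each of which the robber can damage a new vertex. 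The paper's own family $H_n$ is a counterexample --- it is cop-win, so $c(H_n)=1$, yet $\dmg_1(H_n)=\lfloor (n-3)/2\rfloor-1$ grows linearly in $n$. Your monotonicity fallback is also circular: to apply the first conclusion at $k=c(G)-1$ you would need the hypothesis $c(G)=(c(G)-1)+\dmg_{c(G)-1}(G)$, which is exactly the equality $\dmg_{c(G)-1}(G)=1$ you are trying to establish. The fix is simply to drop the detour and substitute the two possible values of $\dmg_k(G)$ back into the assumed equality.
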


For the case where $k=1$, we now prove a complete characterization of connected graphs $G$ which achieve $c(G)=1+\dmg_1(G)$.

\begin{prop}\label{prop:c(G)=dmg(G) +1}
If $G$ is a connected graph, then $c(G) = 1+ \dmg_1(G)$ if and only if G has a dominating vertex, or $c(G)=2$ and there exists $v \in V(G)$ such that for every $w \notin N[v]$, there exists a vertex $x \in N[v]$ such that $N(w) \subseteq N[x]$.
\end{prop}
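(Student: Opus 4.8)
The plan is to prove the biconditional by establishing both directions, keeping in mind that the statement characterizes exactly when the cop number exceeds the single-cop damage number by precisely one. For the backward direction, I would handle the two sufficient conditions separately. If $G$ has a dominating vertex $v$, then placing one cop on $v$ catches the robber immediately, so $c(G)=1$; moreover a dominating vertex means $\dmg_1(G)=0$, giving $c(G)=1=1+\dmg_1(G)$. For the second condition, suppose $c(G)=2$ and there exists $v$ with $E(G-N[v])=\emptyset$. Here I would argue that placing the single cop on $v$ restricts the robber: since $G-N[v]$ has no edges, every vertex outside the closed neighborhood of $v$ is isolated in $G-N[v]$, so the robber can only ever reach one such vertex (its neighbors all lie in $N[v]$, which the cop guards). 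This forces $\dmg_1(G)\leq 1$, and since $c(G)=2>1$ there is no dominating vertex, so $\dmg_1(G)\geq 1$; hence $\dmg_1(G)=1$ and $c(G)=2=1+\dmg_1(G)$.

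For the forward direction, assume $c(G)=1+\dmg_1(G)$. I would first dispose of the case $\dmg_1(G)=0$: by the earlier observations, $\dmg_1(G)=0$ forces $G$ to have a dominating vertex (a single cop preventing all damage must dominate), and then $c(G)=1$, landing in the first alternative. The substantive case is $\dmg_1(G)\geq 1$. By Lemma \ref{c(G)leq k+dmg_k(G)-1}, if $\dmg_1(G)\geq 2$ then $c(G)\leq 1+\dmg_1(G)-1=\dmg_1(G)<1+\dmg_1(G)$, contradicting the hypothesis. Therefore $\dmg_1(G)=1$, which forces $c(G)=2$. It remains to produce a vertex $v$ with $E(G-N[v])=\emptyset$.

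The main obstacle is this last step: extracting the structural condition $E(G-N[v])=\emptyset$ from the fact that $\dmg_1(G)=1$. I would argue contrapositively at the level of the damage game. Since $\dmg_1(G)=1$, there is a cop placement (say on some vertex $v$) and a passing/responding strategy that confines the robber to damaging a single vertex. I would show that the confining vertex must be $v$ itself: if the cop sits on $v$ and the robber can only ever damage one vertex, then after the robber damages its starting vertex it can never move to a fresh undamaged vertex without being caught. I then claim this is equivalent to $E(G-N[v])=\emptyset$, because if $G-N[v]$ contained an edge $xy$, the robber could start at $x$, and since $x$ and $y$ are both outside $N[v]$ and mutually adjacent, the robber could traverse from $x$ to $y$ while the cop on $v$ cannot reach either in one move, damaging two vertices and contradicting $\dmg_1(G)=1$. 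Conversely the confinement to one damaged vertex under the best cop placement exactly rules out such an edge. The care needed here is in verifying that the optimal cop strategy realizing $\dmg_1(G)=1$ can be taken to be ``place on $v$ and pass,'' and that no more clever mobile cop strategy could confine the robber to one vertex while $E(G-N[v])\neq\emptyset$ for every $v$; I would reduce the mobile case to the stationary one by observing that whatever vertex the cop finally settles near must play the role of $v$.
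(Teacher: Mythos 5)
Your proposal follows essentially the same route as the paper: the backward direction is handled identically, and the forward direction invokes Lemma \ref{c(G)leq k+dmg_k(G)-1} to force $\dmg_1(G)\leq 1$ and hence $c(G)\leq 2$ before extracting the structural condition $E(G-N[v])=\emptyset$ from $\dmg_1(G)=1$. The one step you flag as delicate---ruling out a cleverer mobile cop strategy when every $G-N[v]$ contains an edge---is treated just as tersely in the paper's own proof (which simply asserts that if the robber can ever move without being captured then $\dmg_1(G)>1$), so your sketch matches the published argument in both structure and level of detail.
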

\bpf
If $c(G)=\dmg_1(G)+1$, then by Lemma \ref{c(G)leq k+dmg_k(G)-1}, $\dmg_1(G) \leq 1$, and therefore $c(G) \leq 2$.  If $c(G)=1$, then $\dmg_1(G)=0$ and $G$ must have a dominating vertex.  Otherwise, if $c(G)=2$, then $\dmg_1(G)=1$.  If the robber is ever able to move without being captured in the same round, then $\dmg_1(G)>1$. Therefore, there must exist an initial placement of one cop which restricts the robber to a single ``safe" vertex. In other words, there exists some initial cop placement $\{v\} \subseteq V(G)$ such that no matter where the robber places, the cop can always move or stay still in round 1 to prevent the robber from moving. To guarantee this, there must exist $v \in V(G)$ such that for every $w \notin N[v]$, there exists a vertex $x \in N[v]$ such that $N(w) \subseteq N[x]$.

Conversely, if $G$ has a dominating vertex, then $\dmg_1(G)=0$ and $c(G)=1$, which means $c(G)=\dmg_1(G)+1$. If $c(G)=2$ and there exist $v \in V(G)$ such that for every $w \notin N[v]$, there exists a vertex $x \in N[v]$ such that $N(w) \subseteq N[x]$, then the cops can prevent damage to all but one vertex. So $\dmg_1(G)=1$ and therefore, $c(G)=\dmg_1(G)+1$.
\epf

It is worth noting that the set of graphs $G$ for which $c(G)=2$ and $\dmg_1(G) = 1$ is not empty. In fact, there is an infinite family of graphs that satisfy these conditions. For example, we can add an edge between a vertex $v \in V(C_4)$ and every vertex in an arbitrary graph $H$. Furthermore, we can add any number of leaves to the two neighbors of $v$ in the cycle. This family is illustrated in Figure \ref{fig:4cycleplus}.

\begin{figure}[H] \begin{center}
\scalebox{.6}{\includegraphics{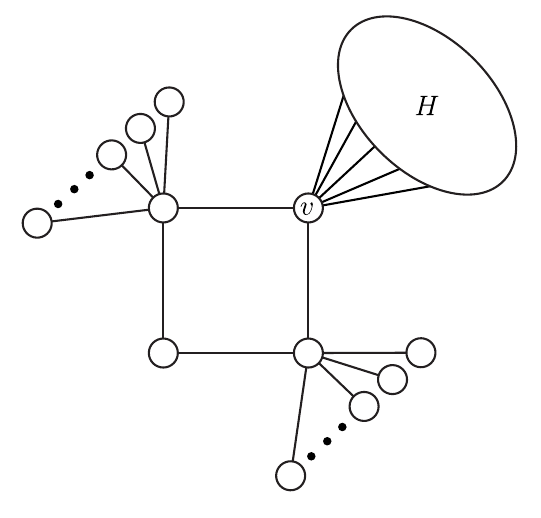}}\\
\caption{An infinite family of graphs satisfying the conditions in Proposition \ref{prop:c(G)=dmg(G) +1}.}\label{fig:4cycleplus} 
\end{center}
\end{figure}
\vspace{-.2 in}
In $\cite{CS19}$, the authors prove the following upper bound for $\dmg_1(G)$. 

\begin{prop}\cite{CS19}
For a graph $G$ on $n$ vertices, $\dmg_1(G)\leq n-\Delta(G) - 1$.
\end{prop}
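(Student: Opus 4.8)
The statement to prove is $\dmg_1(G) \leq n - \Delta(G) - 1$.

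\medskip

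\textbf{Proof plan.} The plan is to exhibit a single cop strategy that guarantees at most $n - \Delta(G) - 1$ vertices are ever damaged, since $\dmg_1(G)$ is the minimum damage over all cop strategies against an optimal robber. The natural choice is to place the one cop on a vertex $v$ of maximum degree, so that $\deg(v) = \Delta(G)$. The key observation is that the closed neighborhood $N[v]$ contains $\Delta(G) + 1$ vertices, and I want to argue that the robber can never damage any vertex in $N[v]$ while the cop sits on $v$, at least in the first round.

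\medskip

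First I would set up the accounting: the robber damages a vertex only by occupying it in a round where capture does not occur, and the cop moves first each round. Placing the cop on $v$, the robber must start on some vertex in $V(G) \setminus N[v]$, since starting on a vertex of $N[v]$ would let the cop capture immediately (a neighbor of $v$) or mean the robber started on $v$ itself. So right away the robber is confined to the $n - \Delta(G) - 1$ vertices outside $N[v]$ for its placement. The heart of the argument is then to show the cop can keep the robber out of $N[v]$ forever, or more precisely that the only vertices the robber can ever damage lie in $V(G) \setminus N[v]$, which already has size $n - (\Delta(G)+1) = n - \Delta(G) - 1$.

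\medskip

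The main step, and the one I expect to be the real obstacle, is justifying that a neighbor $u$ of $v$ can never be damaged. If the cop simply stays on $v$, then whenever the robber tries to move onto a neighbor $u$ of $v$, the robber would be adjacent to the cop but not captured in that move; the robber could damage $u$ and then retreat. So a stationary cop on $v$ is not obviously enough. The fix I would pursue is a \emph{shadowing} or reactive strategy: the cop normally rests on $v$, but if the robber moves to a vertex $u \in N(v)$, the cop responds by moving from $v$ to $u$, capturing the robber. Since the cop moves before the robber in each round, the cop only needs to react after seeing the robber step onto $N(v)$; but because the cop moves first, I must instead argue that the robber will never \emph{choose} to enter $N(v)$, as doing so permits immediate capture on the cop's next turn before any retreat. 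The careful version is: any vertex the robber occupies in $N(v)$ is one from which the cop captures on the following move, so a robber avoiding capture forever never enters $N(v)$, and hence never damages a vertex of $N[v]$. This confines all damage to $V(G) \setminus N[v]$, giving $\dmg_1(G) \leq n - \Delta(G) - 1$. I would make sure to handle the boundary case where $V(G) \setminus N[v]$ is empty (i.e.\ $v$ is a dominating vertex, $\Delta(G) = n-1$), where the bound reads $\dmg_1(G) \leq 0$ and indeed no damage occurs.
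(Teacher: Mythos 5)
Your proof is correct and takes essentially the same approach as the source: the paper cites this bound from Cox--Sanaei without reproving it, but its own $k$-cop generalization (Proposition~\ref{GenUppBndDmgk}) uses exactly this idea, namely that a cop stationed on a maximum-degree vertex $v$ protects all of $N[v]$, confining damage to the $n-\Delta(G)-1$ vertices outside it. Your extra care about the ``reactive'' capture move is the right way to make the informal ``remaining still protects $N[v]$'' rigorous: under the convention that a vertex is damaged only if the robber occupies it in a round in which capture does not occur, any vertex of $N(v)$ the robber enters is one on which it is captured in the next round, so that vertex is never counted as damaged.
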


We now prove an analogous upper bound for $\dmg_k(G)$ and apply it to provide an upper bound for $\thd(G)$. For a graph $G$, let $S_k$ be the set of $k$-vertex subsets $S$ of $V(G)$.

\begin{prop}\label{GenUppBndDmgk}
For all graphs $G$ on $n$ vertices, \[\dmg_k(G)\leq \min_{S\in S_k} \{n-|N[S]|\}.\]
\end{prop}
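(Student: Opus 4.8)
The plan is to exhibit an explicit cop strategy using $k$ cops that confines the robber to a small set of vertices, where the size of that set is exactly $n - |N[S]|$ for a well-chosen $S \in S_k$. Since $\dmg_k(G)$ is a minimum over all games, any single valid cop strategy yields an upper bound, so it suffices to describe one good placement and argue it limits damage appropriately. I would take $S$ to be the $k$-vertex subset realizing the minimum on the right-hand side, i.e. the $S \in S_k$ for which $n - |N[S]|$ is smallest, and place one cop on each vertex of $S$ in round $0$.

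The key observation is that every vertex in $N[S]$ is either occupied by a cop or adjacent to an occupied vertex. First I would note that if the robber ever places on or moves to a vertex of $N[S]$, they are either captured immediately (if on $S$) or can be captured in the next round by the adjacent cop, so the robber cannot safely damage any vertex of $N[S]$ while the cops remain stationary on $S$. Thus, by having all $k$ cops simply remain on their initial vertices of $S$ for the entire game, the robber is forced to confine themselves to $V(G) \setminus N[S]$ in order to avoid capture. The robber can therefore damage at most the $|V(G) \setminus N[S]| = n - |N[S]|$ vertices outside the closed neighborhood of $S$. This gives $\dmg_k(G) \le n - |N[S]|$ for the chosen $S$, and hence $\dmg_k(G) \le \min_{S \in S_k}\{n - |N[S]|\}$.

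The main subtlety — and the step I would be most careful about — is the role of the hypothesis $S \in S_k$, i.e. that $S$ consists of vertices of the $k$ largest degrees. For the bound as stated with the minimum taken over $S_k$, the degree condition is what makes $n - |N[S]|$ small, since high-degree vertices tend to have large closed neighborhoods; but the stationary-cop argument above actually works for \emph{any} $k$-subset $S$, giving $\dmg_k(G) \le n - |N[S]|$ for every $S$. The restriction to $S_k$ is presumably there to mirror the degree-based formulation of the single-cop bound $\dmg_1(G) \le n - \Delta(G) - 1$ from \cite{CS19} (which is the case $k=1$, where $S = \{v\}$ for a maximum-degree vertex $v$ gives $|N[S]| = \Delta(G)+1$). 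I would make explicit that the inequality holds over all $S$ and that taking the minimum over $S_k$ is a valid, if possibly non-optimal, choice, and I would double-check that the minimizing $S$ in the stated formula indeed lies in $S_k$ so that the claimed inequality is the one being proved. No genuine obstacle arises beyond this bookkeeping, since the confinement argument is essentially the stationary specialization of the strategy already used in Proposition \ref{prop:c(G)leqthd(G)}.
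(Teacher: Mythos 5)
Your proof is correct and is essentially the paper's own argument: place the $k$ cops on a set $S \in S_k$ (the one minimizing $n - |N[S]|$), keep them stationary so that no vertex of $N[S]$ can be safely damaged, and conclude that damage is confined to the $n - |N[S]|$ vertices outside $N[S]$. Your side remark that the stationary-cop argument works for any $k$-subset, with the restriction to $S_k$ being a choice mirroring the $\dmg_1(G) \le n - \Delta(G) - 1$ bound, is accurate but does not change the substance.
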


\begin{proof}
Place $k$ cops on the $k$ vertices of $S\in S_k$. By remaining still, a cop placed on $v\in S$ protects the vertices in $N[v]$ from being damaged. Therefore, using this cop placement, $|N[S]|$ vertices remain undamaged.
\end{proof}

\begin{cor}\label{GenUppBndThd}
For all graphs $G$ on $n$ vertices, \[\thd(G)\leq \min_{1\leq k\leq n}\left\{k+ \min_{S\in S_k} \{n-|N[S]|\}\right\}.\]
\end{cor}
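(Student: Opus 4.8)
The plan is to derive Corollary \ref{GenUppBndThd} directly from Proposition \ref{GenUppBndDmgk} by taking a minimum over all valid values of $k$. Proposition \ref{GenUppBndDmgk} gives, for each fixed $k$ with $1 \leq k \leq n$, the bound $\dmg_k(G) \leq \min_{S \in S_k}\{n - |N[S]|\}$. The key observation is that $\thd(G)$ is itself defined as a minimum over $k$ of $k + \dmg_k(G)$, so I would simply add $k$ to both sides of the Proposition's inequality and then minimize.

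First I would fix an arbitrary $k$ with $1 \leq k \leq n$ and add $k$ to both sides of the bound from Proposition \ref{GenUppBndDmgk}, obtaining
\[
k + \dmg_k(G) \leq k + \min_{S \in S_k}\{n - |N[S]|\}.
\]
Next, since this holds for every such $k$, I would take the minimum of both sides over $1 \leq k \leq n$. The left-hand side becomes exactly $\thd(G)$ by Definition \ref{defn:dmgthrot}, while the right-hand side becomes $\min_{1 \leq k \leq n}\{k + \min_{S \in S_k}\{n - |N[S]|\}\}$, which is precisely the claimed upper bound. The monotonicity of the minimum operation guarantees that taking the minimum preserves the inequality direction.

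This is essentially a one-line corollary, so there is no genuine obstacle; the only point requiring the slightest care is confirming that $S_k$ is nonempty for every $k$ in the range, which holds trivially since one can always select $k$ vertices realizing the $k$ largest degrees. Thus the entire argument reduces to substituting the per-$k$ bound into the definition of $\thd(G)$ and invoking the fact that the minimum of a collection of quantities is bounded above by the minimum of any collection of upper bounds for them.
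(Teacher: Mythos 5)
Your proposal is correct and is exactly the argument the paper intends: the corollary follows immediately from Proposition \ref{GenUppBndDmgk} by adding $k$ to both sides and minimizing over $1 \leq k \leq n$, using the definition of $\thd(G)$. The paper leaves this step implicit, so there is nothing to add.
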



\end{section}

\begin{section}{Concluding Remarks}\label{sec:conclusion}

As shown in Section \ref{sec:diff}, there are infinite families of graphs $G$ such that $\thd(G)\leq \thc(G)-2$. However, we were not able to verify the existence of an infinite family of graphs $G$ satisfying $\thd(G)=\thc(G)-a$ for any $a\geq 3$ (we do provide a single graph for which $\thd(G)=\thc(G)-3$ in Theorem \ref{thm:GapOf3Graph}). Despite this, we believe such families exist and it would be interesting to find them.

In \cite{CS19}, the authors ask the question: which graphs $G$ satisfy 
$\dmg_1(G)=n-\Delta(G)-1$? We observe that graphs for which $\Delta(G)=n-1$ or $\Delta(G)=n-2$ achieve this equality, but this does not characterize all such graphs. For example, the graphs in Figure \ref{fig:jellyfish} achieve equality in this bound, but have $\Delta(G)=n-3$. 
\begin{figure}[H] \begin{center}
\scalebox{.7}{\includegraphics{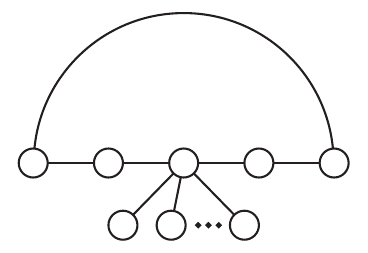}}\\
\caption{An infinite family of graphs $G$ with $\dmg_1(G) = |V(G)| - \Delta(G) - 1$, $\rad(G) = 2$, and $\Delta(G) = |V(G)| - 3$.}\label{fig:jellyfish} 
\end{center}
\end{figure}
\vspace{-.2 in}
We now propose similar questions for the generalized bounds in Proposition \ref{GenUppBndDmgk} and Corollary \ref{GenUppBndThd}.
\begin{quest}\label{quest:GenBounds}
Which graphs $G$ satisfy $\dmg_k(G)=\underset{S\in S_k}{\min} \{n-|N[S]|\}$ for some integer $k$ and furthermore, which graphs satisfy $\thd(G)= \underset{1\leq k\leq n}{\min}\left\{k+ \underset{S\in S_k}{\min} \{n-|N[S]|\}\right\}$?
\end{quest} 
Observe that in Question \ref{quest:GenBounds}, a graph $G$ that realizes the first equality does not necessarily realize the second. However, a graph that satisfies the second equality must satisfy the first for some integer $k$.



\end{section}

\end{document}